\date{\today}
\title{Uniform multicommodity flows in the hypercube with random edge-capacities}
\author{
Colin McDiarmid\\Department of Statistics,\\ University of Oxford,\\ 1 South Parks Road,\\ Oxford, OX1 3TG, UK.\\cmcd@stats.ox.ac.uk\\
\and
Alex Scott\\Mathematical Institute,\\ University of Oxford,\\ Radcliffe Observatory Quarter,\\ Woodstock Road,\\ Oxford, OX2 6GG, UK.\\scott@maths.ox.ac.uk\\ 
\and 
Paul Withers\\Mathematical Institute,\\ University of Oxford,\\ Radcliffe Observatory Quarter,\\ Woodstock Road,\\ Oxford, OX2 6GG, UK.\\withers@maths.ox.ac.uk}
\newcommand{\Pro}{\ensuremath{\mathbb{P}}}
\newcommand{\Ex}{\ensuremath{\mathbb{E}}}
\newcommand{\Ber}{\ensuremath{\mathrm{Ber}}}
\newcommand{\Bin}{\ensuremath{\mathrm{Bin}}}
\newcommand{\vol}{\ensuremath{\mathrm{vol}}}
\newcommand{\capt}{\ensuremath{\mathrm{cap}}}
\newcommand{\all}{\ensuremath{\mathrm{all}}}
\newcommand{\opp}{\ensuremath{\mathrm{opp}}}
\newcommand{\size}{\ensuremath{\mathrm{size}}}
\newcommand{\minus}{\,\text{--}\,}
\newcommand{\cN}{{\cal N}}
\newtheorem{theorem}{Theorem}
\theoremstyle{plain}
\newtheorem{definition}[theorem]{Definition}
\newtheorem{lemma}[theorem]{Lemma}
\numberwithin{equation}{section}
\begin{document}
\maketitle
\thispagestyle{empty} 
\begin{abstract}
\noindent
We give two results for multicommodity flows in the $d$-dimensional hypercube ${Q}^d$ with independent random edge-capacities distributed like a random variable $C$ where $\Pro[C>0]>1/2$. Firstly, with high probability as $d \rightarrow \infty$, the network can support simultaneous multicommodity flows of volume close to $\Ex[C]$ between all antipodal vertex pairs. Secondly, with high probability, the network can support simultaneous multicommodity flows of volume close to $2^{1-d}\Ex[C]$ between all vertex pairs. Both results are best possible.
\end{abstract}
\newpage


\section{Introduction and statement of results}
 
A \emph{network} $\cN$ consists of an undirected graph together with a capacity $c_e \geq 0$ for each edge $e$. Given a collection $\mathcal{V}$ of unordered pairs of vertices in $\cN$,  
a corresponding \emph{multicommodity flow} $F$  
consists of an $st$-flow $f_{st}$ for each pair $s, t$ in $\mathcal{V}$. 
If each $f_{st}$ has volume $\phi$ we say that $F$ is a \emph{uniform multicommodity flow of volume} $\phi$. We say that $F$ is {\em feasible} if the total flow in each edge~$e$ (with no cancellations) is at most its capacity $c_e$.
The \emph{maximum uniform flow volume} is the maximum value of $\phi$ such that there is a feasible uniform multicommodity flow of volume $\phi$ in $\cN$. 

We investigate multicommodity flows in networks with random edge-capacities. Multicommodity flows are of interest in operational research and combinatorial optimisation and sampling (further background can be found later in this section).  They have been studied extensively from a ``worst-case" perspective, but the ``typical" behaviour of multicommodity flow problems is much less well understood.  In order to address this, we work with a model in which the underlying graph is fixed and the edge-capacities are random.  Aldous, McDiarmid and Scott ~\cite{McD8} studied the case in which the underlying graph is complete, the edge-capacities are independent, each distributed like a given random variable  $C$, and $\mathcal{V}$ is the collection of all unordered pairs of distinct vertices.  They showed that, for a distribution with a finite mean, as $n \to \infty$ the maximum uniform flow volume converges in probability to a constant $\phi_*$, where $\phi_*$ is the unique solution to
\[\Ex[\max\{C-\phi,0\}]= 2 \, \Ex[\max\{\phi-C,0\}].\]
For example $\phi^* = \sqrt{2} -1$ when $C$ is uniformly distributed on $[0,1]$. (See \cite{PWThesis} for related results on complete multipartite graphs.)

In this paper, we consider another very natural example: the $d$-dimensional (hyper)cube $Q^d$, with independent random edge-capacities each distributed like $C$.  In this case there are two natural choices for $\mathcal{V}$.
When $\mathcal{V}$ is the collection of {\bf opp}osite (or antipodal) pairs we denote the maximum uniform flow volume by $\Phi_{\opp}$.
When $\mathcal{V}$ is the collection of {\bf all} pairs of distinct vertices we use $\Phi_{\all}$. 

Finding flows in the cube presents very different problems compared to finding flows in the complete graph. In the complete graph, flow between a pair of vertices that cannot use the direct edge must go at least twice as far, using up at least twice the capacity.  Thus the edge-capacity distribution itself is important, as this determines the proportion of flow that can be routed along efficient paths, with other paths being much less efficient. By contrast, the cube is a sparse graph and the distance between a typical pair of vertices is about $d/2$. This leads to two challenges in finding large-volume multicommodity flows: we must show that there are no local obstructions (vertices or small clusters of vertices that are poorly connected to the rest of the graph, so that little flow can escape the region); and, on the larger scale, we must show that most of the flow can be routed along paths of close to optimal length. It turns out that the capacity required for `local escape' is much less than that required for the efficient flows along long paths: this leads to the limiting optimal flow value depending only on $\Ex[C]$, which establishes a strong form of a conjecture from \cite{McD8}.  Here are our two main theorems, for $\Phi_{\opp}$ and for $\Phi_{\all}$.

\begin{theorem}
\label{th:1}
Let the non-negative random variable $C$ satisfy $\Ex[C]<\infty$ and $\Pro[C>0]>1/2$. 
Then, as $d\rightarrow\infty$,
\[\Phi_{\opp}\rightarrow\Ex [C] \text{  in probability and in expectation}.
\label{eq:th1}
\]
\end{theorem}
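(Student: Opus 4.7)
My plan is to prove the upper bound by a direct counting argument and then build a feasible flow of volume close to $\Ex[C]$ by starting from a highly symmetric ``uniform monotone'' flow and correcting it to respect the random capacities. For the upper bound, observe that in any feasible uniform flow of volume $\phi$, each of the $2^{d-1}$ antipodal $st$-flows has volume $\phi$ and decomposes into paths of length at least $d$, so the total edge-load is at least $\phi\, d\, 2^{d-1}$. Since this is at most $\sum_e c_e$, we obtain $\Phi_{\opp}\le \sum_e c_e/(d\, 2^{d-1})$, whose right-hand side tends to $\Ex[C]$ in probability (by the weak law of large numbers) and in $L^1$ (by uniform integrability, which holds since $\Ex[C]<\infty$); this yields the correct $\limsup$ in both senses of convergence.

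For the lower bound, fix $\epsilon>0$ and aim to build a feasible flow of volume $\phi:=(1-\epsilon)\Ex[C]$ with high probability. The starting point is the ``uniform monotone'' flow: each antipodal pair $(v,\bar v)$ sends $\phi/d!$ along each of the $d!$ monotone $v$-to-$\bar v$ paths. By edge-transitivity of $Q^d$, this places load exactly $\phi$ on every edge, so if capacities were deterministic and equal to $\Ex[C]$ we would already be done. With random capacities, the ``bad'' edges with $c_e<\phi$ (a positive-density set, including all zero-capacity edges) are overloaded, so I would reroute the excess $\phi-c_e$ at each bad edge $e$ along the $d-1$ length-$3$ detours in $4$-cycles through $e$, distributed uniformly. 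The extra load imposed on any other edge $f$ is then a sum of $O(d)$ contributions from nearby bad edges, each of size $O((\phi-C)^+/d)$, and a bounded-differences concentration inequality should give that this extra load is close to its mean $O(\Ex[(\phi-C)^+])$ uniformly over $f$.

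The main obstacle will be closing the quantitative gap: the mean extra load does not vanish as $\epsilon \to 0$, so one round of detouring may still overload some edges, and naive iteration risks a cascade. I would address this either by iterating the detouring along successively longer paths, using the mean slack $\Ex[(C-\phi)^+]=\epsilon\,\Ex[C]+\Ex[(\phi-C)^+]$ on the good edges to absorb the excess while controlling the cascade by concentration; or via LP duality, where the max uniform flow equals $\min_{\ell \geq 0}\ \sum_e c_e \ell_e \big/ \sum_{(s,t)\text{ antipodal}} d_\ell(s,t)$, and one would show uniformly over non-negative edge-metrics $\ell$ that $\sum_e c_e\ell_e \geq (\Ex[C]-o(1))\sum_e\ell_e$ while bounding $\sum d_\ell(s,t)$ above via the isoperimetric structure of $Q^d$. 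A separate delicate point is ruling out small clusters of zero-capacity edges that could force exponentially long detours; here the hypothesis $\Pro[C>0]>1/2$ sits above the bond-percolation threshold of $Q^d$ and should yield a well-connected ``core'' containing all but an $o(1)$ fraction of vertices, making such local obstructions negligible.
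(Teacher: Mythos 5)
Your upper bound matches the paper's (path decomposition plus the weak law of large numbers; $L^1$ convergence of the sample mean is standard for an i.i.d.\ $L^1$ sequence), and your starting observation for the lower bound is correct: the uniform monotone flow gives each antipodal pair load $\phi/|E_m|$ on every layer-$m$ edge, hence total load exactly $\phi$ on every edge, which is precisely the paper's first capacity-scaling factor. The gap is in what comes next, and you partly acknowledge it but do not close it. After one round of detouring around bad edges, the extra load placed on a typical good edge has expectation of order $\Ex[(\phi-C)^+]$, which does \emph{not} vanish as $\epsilon\to 0$ (zero-capacity edges alone contribute $\Pro[C=0]\cdot\phi$); so good edges with small slack $c_e-\phi$, a positive-density set, remain overloaded. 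Iterating has no evident contraction---each round lengthens paths and therefore increases total flow-length, so there is no budget inequality forcing convergence---and you must reroute per-commodity, not merely rebalance aggregate edge loads, which you do not address. The LP-duality fallback is just a restatement of the optimum; the hard step, bounding $\sum_{st} d_\ell(s,t)$ from above uniformly over adversarial non-negative metrics $\ell$, is left entirely open.

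The paper's proof sidesteps the cascade altogether by never constructing the symmetric flow and then repairing it. Instead it allocates each antipodal pair $(u,\bar u)$ its own dedicated share $c_e/|E_m(u)|$ of every edge, inflated by a large constant $M$ on the first and last $\ell+2 = O(d^{\kappa})$ layers with $\kappa<1$ (so the inflation contributes $o(1)$ overall), proves that each single-pair flow problem succeeds in this scaled network with failure probability exponentially small in $d$, and union-bounds over the $2^{d-1}$ pairs. Near the endpoints this uses a local-connectivity and matching argument to achieve ``local escape'' to layer $V_\ell$; in the middle a near-balanced flow is built layer by layer via concentration plus a smoothing lemma on bipartite layers, with a stitching lemma to repair residual imbalances. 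Your sketch contains no analogue of this source/middle split or of the smoothing construction, which are the technical heart of the lower bound. Also, as a minor point, $\Pro[C>0]>1/2$ is the threshold for the random subgraph of $Q^d$ to have no isolated vertices (hence be connected), not the bond-percolation threshold for a giant component, which is of order $1/d$.
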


\begin{theorem}
\label{th:2}
Let the non-negative random variable $C$ satisfy $\Ex[C]<\infty$ and $\Pro[C>0]>1/2$.
Then, as $d\rightarrow\infty$,
\[2^{d-1}\Phi_{\all} \rightarrow\Ex [C]  \text{  in probability and in expectation}.
\label{eq:th2}
\] 
\end{theorem}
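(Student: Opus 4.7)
A single dimension cut suffices. Fix any coordinate $i \in [d]$; the corresponding cut consists of $2^{d-1}$ edges with iid capacities $C_1, \ldots, C_{2^{d-1}}$ and separates $4^{d-1}$ unordered pairs. Any feasible uniform flow of volume $\phi$ must satisfy $4^{d-1}\phi \le \sum_j C_j$, hence
\[2^{d-1}\Phi_{\all} \;\le\; \frac{1}{2^{d-1}}\sum_{j=1}^{2^{d-1}} C_j \;\to\; \Ex[C]\]
in probability by the weak law of large numbers, and in expectation by uniform integrability of the sample mean (which follows from $\Ex[C] < \infty$ via truncation and dominated convergence). This yields $\limsup 2^{d-1}\Phi_{\all} \le \Ex[C]$.

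\paragraph{Lower bound strategy.} Fix $\epsilon > 0$. The goal is to construct, whp, a feasible uniform flow of volume $\phi = 2^{1-d}(\Ex[C] - \epsilon)$ between all pairs. The natural heuristic is symmetric random routing: for each pair $\{s, t\}$ send $\phi$ along a uniformly random shortest path, which by edge-transitivity of $Q^d$ puts expected load $\phi \cdot 2^{d-1} \approx \Ex[C] - \epsilon$ on each edge (from $\sum_{s,t} d(s, t) = d \cdot 2^{2d-1}$ summed over $d \cdot 2^{d-1}$ edges). My plan is to adapt the proof of Theorem~\ref{th:1} to all pairs: set up a symmetric global flow along near-shortest paths, then locally reroute around edges whose random capacity falls below the target load. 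Since $\Pro[C > 0] > 1/2$, each vertex has, whp (by concentration on its $d$ incident edges), more than $d/2$ positive-capacity incident edges, giving cheap local escape; flow can be diverted within $O(1)$ extra steps around any bad edge.

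\paragraph{Main obstacle.} The principal challenge is juggling all $\binom{2^d}{2}$ pair-flows simultaneously while the cumulative target load per edge is already $\Ex[C] - \epsilon$, leaving no constant-factor slack for rerouting losses. The hard part will be to show that the aggregate rerouting overhead on each edge is only $o(\Ex[C])$, despite the edge lying on $\Theta(2^{2d})$ near-shortest paths; I expect this to require Chernoff-type concentration over these paths, combined with a careful symmetrization ensuring that displaced flow from rerouting is spread widely rather than concentrated. A tempting shortcut is to reduce directly to Theorem~\ref{th:1}: orient the antipodal flows as $f_v$ with $f_{\bar v} = -f_v$, and for each pair set $g_{\{s,t\}} = \alpha(f_s - f_t)$, which has divergences $\pm\alpha V$ at $\{s, \bar t\}$ and $\{\bar s, t\}$ and decomposes into an $s$-$t$ plus a $\bar t$-$\bar s$ flow. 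However, the naive bound $\sum_{\{s,t\}}\alpha|f_s(e) - f_t(e)| \le 2\alpha(2^d - 1)C_e$ falls short by a constant factor, so this shortcut would work only after strengthening Theorem~\ref{th:1} to guarantee antipodal flows whose pairwise differences cancel substantially on each edge; consequently I expect the direct local-escape-plus-global-routing route to be the cleaner path.
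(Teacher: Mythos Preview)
Your upper bound via a dimension cut is correct, though the paper uses a different argument (the path-length inequality $\phi\sum_{\{u,v\}}d(u,v)\le\sum_e c_e$, which gives $2^{d-1}\Phi_{\all}\le c_{av}$ over \emph{all} edges rather than just one cut). No uniform integrability is needed for the expectation claim: taking expectations of your inequality directly yields $2^{d-1}\Ex[\Phi_{\all}]\le\Ex[C]$, and once convergence in probability is established, convergence in expectation follows from this together with non-negativity.

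For the lower bound, your slogan ``adapt the proof of Theorem~\ref{th:1}'' is exactly what the paper does, but your execution plan and your diagnosis of the main obstacle are both off. Routing each pair $\{u,v\}$ along shortest paths means routing within the subcube $Q(u,v)$, and the genuine difficulty is that \emph{local escape from $u$ cannot be carried out inside $Q(u,v)$} when $d(u,v)\ll d$: the vertex $u$ may be isolated in $Q(u,v)$ even though it has high open degree in $Q^d$, so one cannot simply reapply Lemma~\ref{lem:main1} subcube by subcube. The paper's key new ingredient is a deterministic decomposition (Lemma~\ref{lem:decomp}): the single balanced flow from $u$ to $V_\ell(u)$ in the \emph{full} cube, already supplied by Lemma~\ref{lem:main1}, splits without cancellation into balanced flows of volume $2^{-d}$ from $u$ to each $S_u(v):=Q(u,v)\cap V_\ell(u)$, simultaneously for all $v$ with $d(u,v)>d/4$. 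This recycles the entire Theorem~\ref{th:1} local-escape budget, $o(1)\cdot c_e$ per edge, across all commodities at once. The middle parts are then handled by applying Lemma~\ref{lem:main2} inside each $Q(u,v)$; its failure probability $e^{-\Omega(k^2)}$ with $k>d/4$ survives a union bound over $O(4^d)$ pairs, and a direct double count shows the superposed middle-flow demands sum to at most $c_e$ per edge. Near pairs with $d(u,v)\le d/4$ (at most about $2^{1.9d}$ of them) are routed via antipodal vertices at cost $o(1)\cdot c_e$. Your proposed route---lay down the symmetric template load and then ``locally reroute around edges whose random capacity falls below the target'', controlling overhead by Chernoff---does not get there: for typical $C$ a \emph{constant} fraction of edges have capacity below $\Ex[C]-\epsilon$, so post-hoc rerouting incurs constant-factor rather than $o(1)$ overhead. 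This is precisely why the paper (already in Theorem~\ref{th:1}) builds the feasible flow layer by layer rather than correcting a symmetric template, and why the decomposition lemma is the idea you are missing for Theorem~\ref{th:2}.
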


To find flows as required we need very different proof methods from those used in \cite{McD8}. Consider Theorem 1. The idea is, for each antipodal pair $s$ and $t$, to allocate flow only to paths of near the minimum length $d$, and to do so as evenly as possible. To achieve this we allocate a natural fraction of the capacity of each edge to this pair: we find that the middle part of the flow can be handled very efficiently; and we can handle the flow near the ends $s$ and $t$ (to achieve `local escape')  by different less efficient methods, if we allow extra capacity, which turns out to be negligible in the limit.

For a non-negative random variable $C$ with $\Pro[C>0]>\frac 12$ and $\Ex[C]=\infty$, it follows directly from these results by truncation that  $\Phi_{opp}$ and $\Phi_{all}$ both tend to infinity in probability (and so in expectation).
Call an edge \emph{open} when its capacity is $>0$ : 
and call the network \emph{connected} when the subgraph formed by the open edges is connected.
Clearly $\Phi_{\all}=\Phi_{\opp}=0$ if the network contains an isolated vertex.
The condition $\Pro[C>0]>1/2$ is necessary to ensure that, with high probability, the network contains no isolated vertices.
Indeed, when $\Pro(C>0) = 1/2$,  
the probability of the network having no isolated vertex  $\to e^{-1}$ as $d \to \infty$,  and the probability that the network is connected tends to the same limit (see \cite{79}). 

\bf{Plan of the paper. }\mdseries  In the remaining part of this section we give some background on multicommodity flows; and in the next section we give formal definitions of such flows and two probabilistic inequalities which are useful later in the paper.
The rest of the paper is devoted to proving Theorems \ref{th:1} and \ref{th:2}.
The convergence in probability in Theorem~\ref{th:1} can be expressed as two parts:
\begin{equation} \label{ub}
  \mbox{({\em upper bound}) \;\; for } \epsilon>0, \;\; \Pro[\Phi_{\opp}\le(1+\epsilon)\Ex[C]]\rightarrow 1 \mbox{ as } d \to \infty
\end{equation}
and
\begin{equation} \label{lb}
  \mbox{({\em lower bound}) \;\; for } \epsilon>0, \;\; \Pro[\Phi_{\opp}\ge(1-\epsilon)\Ex[C]]\rightarrow 1 \mbox{ as } d \to \infty,
\end{equation}
and similarly for Theorem \ref{th:2}. The upper bounds for both theorems are straightforward and are proved in Section \ref{sub:upper}.
Convergence in expectation is also covered in that section. 
The bulk of the paper is devoted to proving the lower bounds. The lower bound for Theorem \ref{th:1} is proved in Sections \ref{sec:structurelower} - \ref{sec:th1proof},
and for Theorem \ref{th:2} in Section \ref{sec:th2proof}.

\bf{Background on multicommodity flow. }\mdseries\label{3sec:background} Multicommodity flow problems arise in many real-life situations such as flows in transport systems and communication systems, and are studied extensively in Operational Research. See standard texts such as Ahuja, Magnanti and Orlin \cite{AhujaMagnantiOrlin}, Chapter 17, and Winston \cite{114}, Chapters 7 and 8, for further details. The analysis of multicommodity flows in structured networks that model Markov chains is also useful in  establishing bounds on the mixing times for the chains (Sinclair \cite{100}). These can then be used to establish efficient algorithms for the random sampling of combinatorial structures. Such techniques are important in a wide range of problems including, for example, approximating the size of certain sets and combinatorial optimisation by stochastic search. Leighton and Rao \cite{69MuticommodityAlgorithm} used uniform multicommodity flow results to design the first polynomial-time (at most polylog($n$)-times-optimal) approximation algorithms for well-known NP-hard optimization problems such as
graph partitioning, min-cut linear arrangement, crossing number, VLSI layout, and minimum
feedback arc set. 

Most work on analysing multicommodity flows has been directed towards developing algorithms (see for example \cite{Aldous1,42Aldous,Khandawawala,67Leightonetal}). Theoretical studies require an underlying graph with some structure. Alongside the complete graph  \cite{McD8}, the cube is one of the most natural examples, and has applications in the design of randomised routing algorithms for parallel computing (see Valiant \cite{115}) and in random sampling of structures based on binary $d$-tuples.

\section{Definitions and preliminaries}\label{3sec:definitions}
We recall some definitions and notation concerning flows.
Let $G=G(V,E)$ be a graph. We denote the set of neighbours of a vertex $v$ by $\Gamma_G(v)$ and 
for simplicity we use $V \setminus v$ to mean $V \setminus \{v\}$.
Suppose we have a network $\cN$ consisting of an undirected graph $G$ together with a capacity $c_e \geq 0$ for each edge $e$. 
To define a (single commodity) \emph{flow} in $\cN$ we consider each undirected edge $e=uv$ as a pair of directed edges $\overrightarrow{uv}$ and $\overrightarrow{vu}$.  Denote the set of all these directed edges by~$\overrightarrow{E}$;
and give each directed edge the same capacity as the original edge in $\cN$.

Given a function $f:\overrightarrow{E}\rightarrow [0,\infty)$,
the \emph{net outflow} of $f$ at vertex $x$ is defined as 
\[f^+(x)=\sum_{y \in \Gamma_G(x)} ( f(\overrightarrow{xy})- f(\overrightarrow{yx})).\]
And the \emph{net inflow} $f^-(x)$ is $- f^+(x)$. 

For two disjoint, non-empty sets of vertices $S$ and $T$, we say that such a function is a \emph{proper} $ST$ \emph{flow} if it satisfies $f^+(x)=0$ for all $x \notin S \cup T$. A flow that is not proper is \emph{improper}.

A proper $ST$ flow that also satisfies $f(\overrightarrow{xy}) \le c(\overrightarrow{xy})$ for all $\overrightarrow{xy} \,\in \overrightarrow{E}$ is called a a \emph{feasible} $ST$-\emph{flow}. We also assume that for all $e=xy \in E$, 
	either $f(\overrightarrow{xy})=0$ or $f(\overrightarrow{yx})=0$ and we write $f(e)=f(\overrightarrow{xy})+f(\overrightarrow{yx})$.
	
We choose the order of $S$ and $T$ such that $\sum_{x \in S} f^+(x) \ge 0$ and then the \emph{volume} $\vol(f)$ is the magnitude of the flow given by $\vol(f)=\sum_{x \in S} f^+(x)= \sum_{y \in T} f^-(y)$. In the special case when $S=\{s\}$ and $T=\{t\}$ we speak of an $st$-flow. See for example~\cite{AhujaMagnantiOrlin} for further discussion.

Let $\mathcal{V}$ be a non-empty set of unordered pairs of distinct vertices in the network $\cN$.
A \emph{multicommodity flow} $F$ for $\mathcal{V}$ consists of an $st$-flow $f_{st}$ for each  pair $\{s, t\}$ in $\mathcal{V}$ (which we arbitrarily order as $st$, the choice being immaterial as the underlying graph contains no directed edges).
If each $f_{st}$ has volume $\phi$ we say that $F$ is a \emph{uniform multicommodity flow of volume}~$\phi$.  
The \emph{total flow} of $F$ in edge $e$ of $G$ is \mbox{$\sum_{st \in \mathcal{V}}f_{st}(e)$}; 
and $F$ is {\em feasible} if the total flow in each edge $e$ is at most its capacity $c_e$.
The maximum uniform flow volume is the maximum value of $\phi$ such that there is a feasible uniform multicommodity flow of volume $\phi$. 
For a network $\mathcal{N}$ whose underlying graph is $Q^d$ and whose edge-capacities are independent, each distributed like a given random variable $C$ we say $\mathcal{N} \in \mathcal{G}(Q^d,C)$; and in the specific case of $C \sim \Ber(p)$, the Bernoulli distribution with parameter $p$, we say $\mathcal{N} \in \mathcal{G}(Q^d,p)$. 

We need two basic lemmas concerning tail probabilities. 

\begin{lemma}\label{lem:bin}
  Given $1/2<p\le 1$,  there are constants $t>0,\tau>0$ such that 
\[ \Pro[\Bin(d,p) \le t d] \le 2^{-(1+\tau)d}.\]
\end{lemma}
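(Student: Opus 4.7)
This is a routine binomial large-deviation estimate; the role of the hypothesis $p>1/2$ is precisely to make the exponential rate of decay strictly exceed $\log_2 2=1$, so that the probability beats $2^{-d}$ by a positive fraction. My plan is to invoke the standard Chernoff bound for the lower tail of a binomial and then choose $t$ sufficiently close to $0$.

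For $0<t<p$, Chernoff's inequality gives
\[\Pro[\Bin(d,p)\le td]\le 2^{-dI(t)},\qquad I(t):=t\log_2\frac{t}{p}+(1-t)\log_2\frac{1-t}{1-p},\]
where $I(t)$ is the Kullback--Leibler divergence of $\Ber(t)$ from $\Ber(p)$, with the convention $0\log_2 0=0$. It therefore suffices to exhibit a single $t>0$ with $I(t)>1$.

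I would then observe that $I$ is continuous on $[0,p)$ and that $I(0)=\log_2(1/(1-p))$. The hypothesis $p>1/2$ gives $1-p<1/2$, so $I(0)=-\log_2(1-p)>1$ strictly. By continuity there is some $t_0>0$ with $I(t_0)>1$; setting $t=t_0$ and $\tau=I(t_0)-1>0$ completes the argument. The degenerate case $p=1$ is immediate since then $\Bin(d,1)=d$ almost surely, so any $0<t<1$ works. There is no genuine obstacle here; the whole content of the lemma is that $p>1/2$ is exactly the threshold at which the ``atypically small'' rate $-\log_2(1-p)$ first exceeds the critical value $1$. If one preferred to avoid quoting the KL-divergence formula, an equivalent route is to rewrite the event as $\{\Bin(d,1-p)\ge(1-t)d\}$ and apply the upper-tail Chernoff bound with parameter $1-p<1/2$; the same threshold $-\log_2(1-p)>1$ appears in the limit $t\to 0^+$.
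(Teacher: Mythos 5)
Your proof is correct and takes essentially the same route as the paper: both are Chernoff/MGF lower-tail bounds that exploit the fact that $1-p<1/2$ gives $-\log_2(1-p)>1$, with room to spare as $t\to 0^+$. The only cosmetic difference is that you invoke the optimized Chernoff bound in its KL-divergence form and appeal to continuity of $I$ at $t=0$, whereas the paper carries out the Markov/MGF computation by hand with a suboptimal parameter (first fixing $x\in(0,1)$ with $1-p+xp<1/2$, then choosing $t$ small); both yield the same threshold and the same conclusion.
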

\begin{proof}
Let $X \sim \Bin(d,p)$. For $t>0$ and $x \in (0,1)$, by Markov's inequality
\[
(1-p+xp)^d = \Ex[x^X] \ge x^{td}\Pro[x^X \ge x^{ td}] = x^{ td} \Pro[X \le  td]].
\]
And so
\[
  \Pro[X \le td] \leq \left(x^{- t} (1-p+xp)\right)^d.
\]
We may pick $x \in (0,1)$ such that $1-p+xp< 1/2$, and we then pick $t>0$ such that
$ x^{- t} (1-p+xp) < 1/2 $,
finally we may choose $\tau>0$ so that $x^{- t} (1-p+xp)=2^{-(1+\tau)}$.
\end{proof}
\noindent
The following inequality is a form of Chernoff bound (see for example~\cite{35-AlonSpenser} Theorems A.1.4 and A.1.16).

\begin{lemma}\label{ineq:chern}
Let $X_1,\ldots,X_n$ be independent random variables with all $|X_i|\le 1$,
let $X= X_1+ \cdots +X_n$, and let $Y = X\!-\!\Ex{X}$. Then for each $a \geq 0$,
\[\Pro(Y \geq a) \leq e^{-a^2/2n} \;\;\; \mbox{ and } \;\;\; 
\Pro(Y \leq -a) \leq e^{-a^2/2n}.\]

\end{lemma}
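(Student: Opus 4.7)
The plan is to prove this by the classical exponential-moments (Chernoff) method. Set $Y_i=X_i-\Ex[X_i]$, so that the $Y_i$ are independent with mean zero and $Y=\sum_{i=1}^n Y_i$. Since $|X_i|\le 1$, each $Y_i$ lies in an interval of length $2$, namely $[-1-\Ex[X_i],\,1-\Ex[X_i]]$. For any $\lambda>0$, Markov's inequality applied to $e^{\lambda Y}$, combined with independence of the $Y_i$, gives
\[
\Pro(Y\ge a)\;\le\; e^{-\lambda a}\,\Ex[e^{\lambda Y}]\;=\;e^{-\lambda a}\prod_{i=1}^n \Ex[e^{\lambda Y_i}].
\]
So it remains to bound each moment generating function and then optimise in $\lambda$.

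The hard step is the MGF estimate, namely Hoeffding's lemma: if $Z$ is a mean-zero random variable supported in an interval of length $\ell$, then $\Ex[e^{\lambda Z}]\le e^{\lambda^2\ell^2/8}$. I would prove this in the standard way: for $z$ in the support interval $[\alpha,\beta]$, bound $e^{\lambda z}$ above by the linear interpolant between its values at the two endpoints (by convexity of the exponential), take expectations to kill the linear term, and then verify by a one-variable calculus argument (showing the second derivative of the logarithm of the resulting expression, viewed as a function of $\lambda$, is at most $(\beta-\alpha)^2/4$) that the bound $\exp(\lambda^2\ell^2/8)$ holds. Applied to each $Y_i$ with $\ell=2$, this yields $\Ex[e^{\lambda Y_i}]\le e^{\lambda^2/2}$.

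Plugging this back in, $\Pro(Y\ge a)\le \exp(-\lambda a+n\lambda^2/2)$ for every $\lambda>0$; choosing the minimiser $\lambda=a/n$ gives $\Pro(Y\ge a)\le e^{-a^2/(2n)}$, as required. The lower-tail bound $\Pro(Y\le -a)\le e^{-a^2/(2n)}$ follows by applying the same argument to the variables $-X_i$, which satisfy the identical hypothesis $|-X_i|\le 1$; no separate work is needed. The main obstacle is thus the Hoeffding MGF estimate for a bounded mean-zero variable, since the short calculus inequality underlying it is the only non-routine ingredient; everything else is bookkeeping and optimisation.
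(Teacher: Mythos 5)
The paper does not prove this lemma at all: it is stated as a known form of the Chernoff bound and simply cited to Alon and Spencer (Theorems A.1.4 and A.1.16), so there is no in-paper argument to compare against. Your proposal is the standard and correct exponential-moment proof via Hoeffding's lemma (bound each $\Ex[e^{\lambda Y_i}]$ by $e^{\lambda^2/2}$ using the length-$2$ support, multiply by independence, apply Markov to $e^{\lambda Y}$, optimise $\lambda=a/n$, and get the lower tail by negating the $X_i$), which is precisely the argument underlying the cited result.
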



\section{Upper bounds and convergence in expectation}\label{sub:upper}

We need one deterministic lemma. Let $d(u,v)$  denote the number of edges in a shortest path between vertices $u$ and $v$.

\begin{lemma}\label{lem:upper} \label{lem.ubdet}
Let $\cN$ be a network consisting of a graph $G$ together with a capacity $c_e\ge 0$ for each edge $e$; and let $\mathcal{V}$ be any non-empty collection of unordered pairs of distinct vertices of $G$ and let $\phi\ge0$. If there is a feasible uniform  multicommodity flow of volume $\phi$ between all pairs in $\mathcal{V}$ then 
\[ \phi \sum_{\{u,v\} \in \mathcal{V}}d(u,v) \le \sum_{e \in E(G)} c_e. \]
\end{lemma}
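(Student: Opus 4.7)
The plan is a standard flow-counting argument: sum the total edge-usage across the multicommodity flow, bounding it below using path lengths and above using capacities.

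First, I would fix a feasible uniform multicommodity flow $F = (f_{st})_{\{s,t\} \in \mathcal{V}}$ of volume $\phi$ and, for each pair $\{s,t\} \in \mathcal{V}$, apply a path decomposition to $f_{st}$. Since $f_{st}$ is a non-negative flow with net outflow $\phi$ at $s$ and net inflow $\phi$ at $t$ (after removing any internal cycles, which can only decrease edge usage), it can be written as a sum of path-flows: there exist $st$-paths $P_1, \ldots, P_k$ (in the underlying graph $G$) with weights $\lambda_1, \ldots, \lambda_k \ge 0$ such that $\sum_i \lambda_i = \phi$ and such that $\sum_{e \in P_i \text{ for some } i} \lambda_i \mathbf{1}[e \in P_i] \le f_{st}(e)$ for every edge $e$. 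In particular, $\sum_{e \in E(G)} f_{st}(e) \ge \sum_i \lambda_i |E(P_i)|$.

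Next, I would use the fact that every $st$-path has at least $d(s,t)$ edges, so $|E(P_i)| \ge d(s,t)$ for each $i$. Therefore
\[
\sum_{e \in E(G)} f_{st}(e) \ge \sum_i \lambda_i \, d(s,t) = \phi \, d(s,t).
\]
Summing this inequality over all pairs $\{s,t\} \in \mathcal{V}$ gives
\[
\sum_{\{s,t\} \in \mathcal{V}} \sum_{e \in E(G)} f_{st}(e) \; \ge \; \phi \sum_{\{u,v\} \in \mathcal{V}} d(u,v).
\]

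Finally, I would swap the order of summation on the left and invoke feasibility: for each edge $e$, the total flow $\sum_{\{s,t\}} f_{st}(e)$ is at most $c_e$. Summing over $e$ then yields
\[
\sum_{\{s,t\} \in \mathcal{V}} \sum_{e \in E(G)} f_{st}(e) \; = \; \sum_{e \in E(G)} \sum_{\{s,t\} \in \mathcal{V}} f_{st}(e) \; \le \; \sum_{e \in E(G)} c_e,
\]
and combining the two displays gives the required bound. The only mildly subtle point is ensuring that the path decomposition really does dominate $\phi \cdot d(s,t)$ in total edge weight; this is standard, but one must be a little careful that cycles in the flow decomposition (which carry weight but no net contribution to volume) only increase the left-hand side, so the inequality still goes the right way.
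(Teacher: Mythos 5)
Your proof is correct and follows essentially the same approach as the paper: decompose each $f_{st}$ into path-flows (plus possibly cycles), use that every $st$-path has length at least $d(s,t)$ to lower-bound the total edge-usage of $f_{st}$ by $\phi\, d(s,t)$, then sum over pairs and invoke feasibility edge-by-edge. You have merely spelled out the summation swap and the cycle-handling slightly more explicitly than the paper does.
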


\begin{proof}
Let $\{u,v\}$ be a pair in $\mathcal{V}$, suppose it is ordered as $uv$, and consider the flow $f_{uv}$ from $u$ to $v$. This flow can be decomposed as a sum of flows along paths from $u$ to $v$, together perhaps with some flows around cycles (see for example Ahuja, Magnanti and Orlin \cite[page 80]{AhujaMagnantiOrlin}).
Since each of the paths has length at least $d(u,v)$, the total capacity used by $f_{uv}$ is at least $\phi \cdot d(u,v)$.
\end{proof}

  When $G$ is $Q^d$   and $\mathcal{V}$ is the collection of all $2^{d-1}$ antipodal pairs, then by the last lemma
\[ \phi \cdot d 2^{d-1}  \le  \sum_{e \in E(G)} c_e = d2^{d-1} c_{av}, \]
where $c_{av}$ is the average of the edge-capacities of $G$; and so
\begin{equation} 
  \phi \le c_{av}.\label{eq:exp1}
\end{equation}
Observe that for each vertex $u$, $\; 2^{-d} \sum_{v} d(u,v) = d/2$.  Thus
if we take $\mathcal{V}$ as the collection of all pairs of distinct vertices then 
\[ \sum_{\{u,v\} \in \mathcal{V}} d(u,v) = \frac{1}{2} \sum_{u, v} d(u,v) = \frac12 \, 2^{2d} \, \frac{d}{2},\]
and so as above
\[ \phi \cdot d 2^{2d-2} \le d2^{d-1} c_{av},\]
that is
\begin{equation}
  \phi \cdot 2^{d-1} \le c_{av}. \label{eq:exp2}
\end{equation}

When we have random edge-capacities,  $C_{av}$ is the mean of $d2^{d-1}$ iid (independent, identically distributed) random variables with finite mean $\Ex[C]$,
and so by the weak law of large numbers,  
given $\epsilon>0$, $\Pro[C_{av}>(1+\epsilon)\Ex[C]]\rightarrow0$ as $d \to \infty$;
and the upper bounds in Theorems \ref{th:1} and \ref{th:2} as in~(\ref{ub}) follow from~(\ref{eq:exp1}) and (\ref{eq:exp2}).
\smallskip

Now consider expectations. Observe that always $\Phi_{\opp} \ge 0$, and from~(\ref{eq:exp1}) we have $\Ex[\Phi_{\opp}]\le \Ex[C]$.  Thus once we have proved that
$\Phi_{\opp}\rightarrow \Ex [C]$ in probability it follows that $\Ex[\Phi_{\opp}] \rightarrow \Ex [C]$.
Similarly, from~(\ref{eq:exp2}) we see that $2^{d-1} \Ex[\Phi_{\all}]\le \Ex[C]$.  Thus once we have proved that
$2^{d-1} \Phi_{\all}\rightarrow \Ex [C]$ in probability it follows that $2^{d-1} \Ex[\Phi_{\all}] \rightarrow \Ex [C]$.
Thus it remains to prove the lower bounds, as in~(\ref{lb}), for convergence in probability.


\section{Antipodal flows: overview of lower bound proof}\label{sec:structurelower}

We break the proof of the lower bound (\ref{lb})~in Theorem \ref{th:1} down into $2^{d-1}$ separate parts, each of which concerns the flow of a single commodity between a pair of antipodal vertices. For each such pair we allocate a portion of the capacity of each edge according to scaling factors described in the next paragraphs.
We find that, for each pair, there is only a very small probability that there fails to be a
feasible flow of volume about $\Ex[C]$ in the restricted network (see Lemma \ref{lem:0}). Then by taking the union bound, with high probability such flows exist simultaneously for all antipodal vertex pairs. When we superimpose these flows we need to sum the capacity used by all the separate flows; and we show that for every edge $e$ the total is at most $(1+o(1))c_e$. Thus the theorem follows by rescaling flows and capacities.
\smallskip

In order to introduce the capacity scaling, let us first define the vertex and edge `layers' from a given source vertex $u$ in $Q^d$.
For $m=0,1,\ldots,d$ the \emph{vertex layer} $V_m(u)$ consists of all the vertices at distance $m$
from $u$; and for $m=1,\ldots,d$ the \emph{edge layer} $E_m(u)$ consists of all the edges between vertices in $V_{m-1}(u)$ and $V_m(u)$.
We shall often take $\mathbf{0}$ (the $d$-vector of 0's) as a representative vertex
(note that the cube is vertex-transitive).  Write $V_m$ for $V_m(\mathbf{0})$ and $E_m$ for $E_m(\mathbf{0})$,
and note that $|V_m|=\binom{d}{m}$ and $|E_m|=m\binom{d}{m}$.

The capacity of each edge is divided amongst the $2^{d-1}$ sub-problems as follows.
We fix a constant $1/2<\kappa<1$ and we define
\[\ell= \ell(d)=\left\lfloor d^{\kappa}\right\rfloor.\] 
Here we think of the $\ell$ as {\bf l}ocal.  The quantity $\kappa$   stays fixed throughout the paper. For a specific pair of antipodal vertices $u,\overline{u}$ the scaling is achieved in two stages. Firstly, for each $m$,  each edge $e$ in layer $E_m(u)$  is given a capacity of $c_e/|E_m|$. We denote the network constructed so far by $\mathcal{N}(u)$. For the second stage we choose a (large) constant $M$
and scale up the capacities on the first and last $\ell+2$ edge layers,
so the capacity of an edge in layer $E_m$ is
\begin{equation}
\label{eq:capacity}
\mbox{Scaled capacity} = \begin{cases} c_e/|E_m| &\mbox{if } \ell+3\le m \le d-\ell-2 \\
M \cdot c_e/|E_m| & \mbox{if } m \le \ell+2 \mbox{ or } m\ge d-\ell-1. \end{cases}
\end{equation}

The network with these scaled capacities is denoted by $\cN^M(u)$, and is illustrated for the case $u=\bf{0}$ in Figure \ref{fig:1}.

\floatstyle{plain}
\restylefloat{figure}	
	
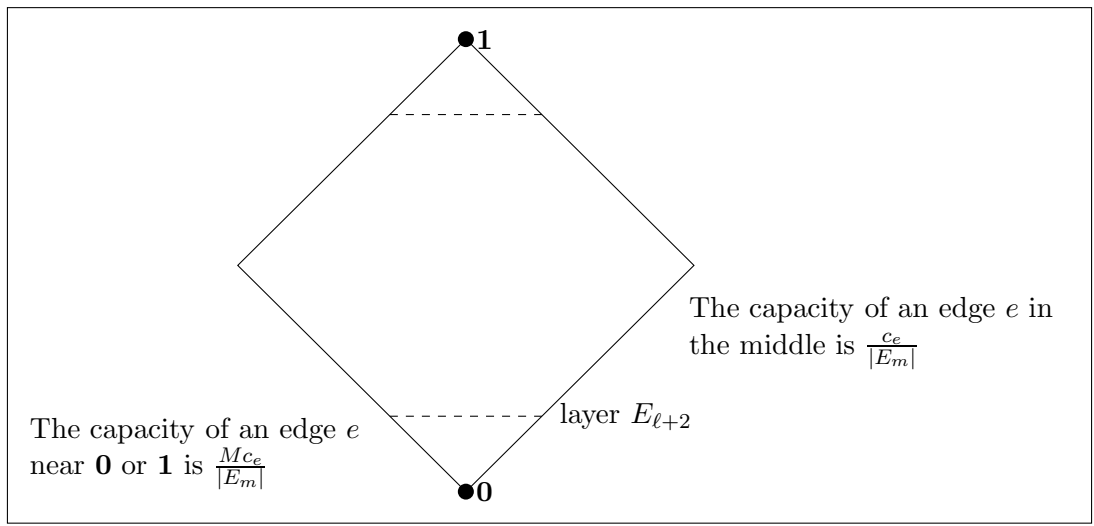
\begin{figure}[h]
	\centering
		\small
\begin{tikzpicture}[show background rectangle]
\fill[black] (0,0) circle(3pt) node[right]{$\bf{0}$};
\fill[black] (0,6) circle(3pt) node[right]{$\bf{1}$};
\draw (0,0) -- (-3,3) -- (0,6) -- (3,3) -- (0,0);
\draw[dashed] (-1,1) -- (1,1);
\draw[dashed] (-1,5) -- (1,5);
\draw (1.1,1) node[right, text width=5cm]{layer $E_{\ell+2}$};
\draw (-0.6,0.5) node[left, text width=5cm]{The capacity of an edge $e$ near $\bf{0}$ or $\bf{1}$ is $\frac{Mc_e}{|E_m|}$};
\draw (2.8,2.1) node[right, text width=5cm]{The capacity of an edge $e$ in the middle is $\frac{c_e}{|E_m|}$};
\end{tikzpicture}
\normalsize
\caption{\mdseries  The scaled network $\cN^M(\mathbf{0})$ illustrated as layers. \normalsize}
	\label{fig:1}
\end{figure}	
\floatstyle{boxed}
\restylefloat{figure}
The first scaling  provides sufficient capacity for the `middle part' of the flow, where we are not near $u$ or $\overline{u}$.
We shall see that the total of the capacities scaled by the first factor for a given edge $e$, summed over all the $2^{d-1}$ antipodal pairs, is exactly $c_e$. 

The second scaling factor $M$ is introduced to enable flows to `escape' from vertex $u$ and reach the boundary layer $V_\ell(u)$ (and similarly for $\overline{u}$). Due to local constraints, these flows may require capacity in an edge $e$ 
in the first $\ell+2$ edge-layers which is considerably greater than $c_e/|E_m|$, so the factor $M$ must be chosen sufficiently large. However, the impact of this `profligate' use of capacity close to a source or sink turns out to be negligible, since $\kappa<1$ and thus $\ell = o(d)$. We shall see that we need the other bound on $\kappa$, namely that $\kappa >1/2$, in the proof of Lemma \ref{lem:n3} (further details are given at the end of the proof of that lemma).

The following lemma is the main step in the proof of Theorem \ref{th:1}.
\begin{lemma}\label{lem:0}
  Given $\epsilon > 0$ there are constant $M$ and $\rho>0$
  such that the following holds.
Let $\phi(u)$ be the maximum feasible flow volume between vertices $u$ and $\overline{u}$ in $\cN^M(u)$. Then as $d \to \infty$,
\begin{equation}
\Pro[\phi(u) < (1-\epsilon) \Ex[C] \text{ for some } u \text{ in } Q^d] =O( 2^{-\rho d}). 
\label{eq:lem01}
\end{equation}
\end{lemma}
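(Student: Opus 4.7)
The plan is to bound, for each fixed source vertex $u$, the probability $\Pro[\phi(u) < (1-\epsilon)\Ex[C]]$ by $O(2^{-(1+\rho)d})$ for some $\rho > 0$, and then apply the union bound over the $2^d$ choices of $u$ to obtain (\ref{eq:lem01}). By vertex-transitivity of $Q^d$ the distribution of $\phi(u)$ is independent of $u$, so it suffices to fix $u = \mathbf{0}$ and exhibit a feasible $\mathbf{0}\mathbf{1}$-flow of volume $(1-\epsilon)\Ex[C]$ in $\cN^M(\mathbf{0})$ outside an event of probability at most $2^{-(1+\rho)d}$.

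The flow is built as a concatenation of three pieces, linked at the boundary layers $V_{\ell+2}$ and $V_{d-\ell-2}$: a local \emph{escape flow} transporting the total volume from $\mathbf{0}$ outward to a prescribed (naturally, uniform) distribution across $V_{\ell+2}$; a symmetric \emph{capture flow} from $V_{d-\ell-2}$ inward to $\mathbf{1}$; and a \emph{middle flow} connecting the two boundary distributions across the layers $E_{\ell+3}, \ldots, E_{d-\ell-2}$. The escape and capture pieces exploit the factor-$M$ scaling in the near-endpoint layers, while the middle piece operates in the uniformly scaled region with capacities $c_e/|E_m|$.

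For the middle flow, one begins with the canonical ``symmetric'' candidate that places $c_e/|E_m|$ of forward flow on each edge $e \in E_m$: by Lemma \ref{ineq:chern} (after truncating $C$), the layer-total capacity $\sum_{e \in E_m} c_e/|E_m|$ concentrates at $\Ex[C]$ with failure probability $\exp(-\Omega(|E_m|)) \le 2^{-\omega(d)}$, so this candidate carries volume $\approx \Ex[C]$ across any middle cut. One then scales it down by $1 - \epsilon/2$ and uses the resulting per-edge slack to add a small correction flow that absorbs the random imbalances at individual interior vertices, producing a genuinely feasible and conservation-preserving flow. This step, together with matching the prescribed uniform distributions on $V_{\ell+2}$ and $V_{d-\ell-2}$, is the delicate part of the middle construction.

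For the local escape near $\mathbf{0}$ the factor $M$ supplies the slack. The chief risk is a ``local obstruction'': a vertex of the $(\ell+2)$-neighbourhood of $\mathbf{0}$ with too few open incident edges, through which flow cannot escape. Lemma \ref{lem:bin} gives that every vertex of $Q^d$ has at least $td$ open edges except on an event of probability at most $2^d \cdot 2^{-(1+\tau)d} = 2^{-\tau d}$; on the good event a Hall/Menger-type routing builds the required $\mathbf{0}$-to-$V_{\ell+2}$ flow using only open edges and respecting the capacities $M c_e/|E_m|$, once $M = M(\epsilon, p)$ is large enough to absorb the constant-factor overhead incurred by using open edges only. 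I expect the main obstacle to lie in the middle-flow construction: although each single cut concentrates easily, producing one flow that is simultaneously feasible on every middle edge and matches the prescribed boundary distributions requires a careful layered-routing analysis. The condition $\kappa > 1/2$ (needed in Lemma \ref{lem:n3}) plausibly enters here: the random fluctuations controlling the correction step sum over $\Theta(\ell)$ independent variables, so their $\sqrt{\ell}$-scale typical size must be small compared to a $d$-scale target, forcing $\ell \gg \sqrt d$.
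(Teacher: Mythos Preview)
Your three-piece decomposition (escape from $\mathbf{0}$ to a boundary layer, a middle flow across the bulk, and a symmetric capture near $\mathbf{1}$) and the role you assign to the factor $M$ match the paper's architecture. The paper glues at $V_\ell$ rather than $V_{\ell+2}$, and it organises the probability differently --- the escape is established for \emph{all} vertices simultaneously (Lemma~\ref{lem:main1}, failure probability $2^{-\rho d}$), and only the middle piece is handled per vertex and then union-bounded (failure probability $e^{-\Omega(d^2)}$ each, so the union over $2^{d-1}$ pairs costs nothing) --- whereas you announce a per-vertex bound for the whole event. Either bookkeeping can be made to work, but note that your own escape paragraph already silently switches to the ``all vertices at once'' viewpoint.

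There is, however, a genuine gap in your escape sketch. High open degree at every vertex (which is all Lemma~\ref{lem:bin} gives) is \emph{not} sufficient to build a \emph{balanced} flow from $\mathbf{0}$ to the boundary layer within the scaled ball. The difficulty is that a vertex $v\in\Gamma_Q(\mathbf{0})$ whose direct edge $\mathbf{0}v$ is closed must still receive its $1/d$ share, which has to arrive via length-$3$ detours through $V_2$; for this you need open matchings between $\Gamma_Q(\mathbf{0})\setminus v$ and $\Gamma_Q(v)\setminus\mathbf{0}$, not merely large degrees. The paper isolates exactly this as ``$\alpha$-local-connectivity'' (Definition~\ref{def:alp}: a degree condition plus two matching conditions between neighbour sets), shows it holds everywhere with probability $1-2^{-\rho d}$ (Lemma~\ref{sublem:1}), and then constructs the escape deterministically in two steps: first a balanced flow from $u$ to all of $\Gamma_Q(u)$ inside $B^{M_1}_3(u)$ (Lemma~\ref{lem:local}), then an iteration of this layer by layer out to $V_\ell$ (Lemma~\ref{lem:prop}). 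Your ``Hall/Menger-type routing'' gesture points in the right direction but does not supply this mechanism. On the middle flow your diagnosis is accurate; the paper's correction device is a sprinkling decomposition $\mathcal{B}_m\sim\mathcal{B}'\cup\mathcal{B}_\delta$ in which the full capacity of $\mathcal{B}'$ carries the canonical flow and a separate smoothing lemma (Lemma~\ref{lem:smooth}) disperses the per-vertex imbalances along length-$40$ open walks in $\mathcal{B}_\delta$, after which a general ``stitching'' result (Lemma~\ref{lem:stitching}) turns the near-balanced layer flows and the balanced escape flows into one proper $\mathbf{0}\mathbf{1}$-flow. Your reading of the constraint $\kappa>1/2$ is essentially correct.
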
	
In Sections \ref{sec:antipodal1} and \ref{sec:antipodal2} we establish preliminary results concerning flow close to the source and concerning the middle part of the flow, and in Section \ref{sec:th1proof} we complete the proof of Lemma~\ref{lem:0}.
It is helpful to work with `balanced' and `nearly balanced' flows. Given disjoint non-empty sets of vertices $S$ and $T$ in a network $\cN$,
and an $ST$- flow $f$,
 we say that $f$ is \emph{balanced} if the net outflow at each vertex in $S$ is $\vol(f)/|S|$ and the net inflow at each vertex in $T$ is $\vol(f)/|T|$. Given $\mu >0$, we say that $f$ is $\mu-$\emph{near-balanced} if 
	\[\sum_{v \in S}\left| f^+(v) - \frac{\vol(f)}{|S|}\right| +
	\sum_{v \in T}\left|f^-(v)-\frac{\vol(f)}{|T|}\right| \leq \mu \,\vol(f).\]


\section{Antipodal flows close to a source}\label{sec:antipodal1}

Our aim in this section is to show that with high probability
(that is, with probability $\to 1$ as $d \to \infty$),
for each vertex $u$ there is a balanced flow 
of volume $\Ex[C]$ from $u$ to $V_{\ell}(u)$ in $\cN^M(u)$, when capacities are scaled by a suitable factor~$M$.

We begin with the special case when capacities take only values 0 or~1. The general case will follow easily.
Given $0 \leq p \leq 1$ we let $Q_p=(Q^d)_p$ be the random subgraph of $Q^d$ where the edges appear independently with probability $p$.  We also think of this as a random network based on $Q^d$ where the edge-capacity $C$ satisfies $\Pro(C=1)=p$ and $\Pro(C=0)=1-p$.

For this case we show that (a) with high probability all vertices are suitably `locally connected', (b) when this holds there must be a balanced flow of volume $\Ex[C]$ from $u$ to $V_1(u)$ when capacities are scaled by a suitable constant $M_1$, and (c) by scaling capacities by $M=7 M_1$ we may find a flow from $u$ to $V_{\ell}(u)$ as required. Parts (b) and (c) are deterministic.
\medskip

The cube $Q^d$ can equivalently be defined by representing each vertex by a distinct subset of $[d]=\{1,2,\dots,d\}$ with two vertices being adjacent when their symmetric difference is a singleton. 

Each vertex in the cube $Q=Q^d$ has degree $d$. Let $u$ be a vertex of $Q$ and let $v \in \Gamma_Q(u)$. Then the edges in $Q$ between $\Gamma_Q(u) \setminus v$ and $\Gamma_Q(v) \setminus u$ form a perfect matching, of size $d-1$.  Each edge is of the form $w_1w_2$, where $w_1 \in V_1(u)$ and $w_2 \in V_2(u)$.  These edges $w_1w_2$, together with the edges $uw_1$ and $w_2v$ form $d-1$ internally vertex-disjoint $uv$-paths $uw_1w_2v$.
See figure~\ref{fig:d22n}, which illustrates the case  $u=\emptyset$ (or $\mathbf 0$) and $v=\{1\}$.

\floatstyle{plain}
\restylefloat{figure}
\begin{figure}[h]
\small
\begin{tikzpicture}[show background rectangle]
\fill[black] (0,0) circle(3pt) node[below]{${ u=\emptyset}$};
\fill[black] (-2,2) circle(3pt)node[left]{${\scriptstyle v=\{1\}}$};
\fill[black] (-1,2) circle(3pt)node[left]{${\scriptstyle\{2\}}$};
\fill[black] (-0.5,2) circle(3pt)node[right]{${\scriptstyle\{3\}}\dots$};
\fill[black] (1,2) circle(3pt)node[right]{${\scriptstyle\{d\}}$};
\fill[black] (-1.5,4) circle(3pt)node[above left]{${\scriptstyle\{1,2\}}$};
\fill[black] (-1,4) circle(3pt)node[above right]{${\scriptstyle\{1,3\}}\dots$};
\fill[black] (0.5,4) circle(3pt)node[above right]{${\scriptstyle\{1,d\}}$};
\draw (0,2) ellipse (3.7cm and 0.7cm);
\draw (0,4) ellipse (3.7cm and 0.7cm);
\normalsize
\draw (1.8,2)node[right]{\small ${\Gamma_Q(u)} $};
\draw (1.4,4)node[right]{\small ${\Gamma_Q(v)\!\setminus\!u}$};
\small
\draw[gray, dashed] (0,0) -- (-1,2);
\draw[gray, dashed] (0,0) -- (-0.5,2);
\draw[gray, dashed] (0,0) -- (1,2);
\draw[gray, dashed] (-2,2) -- (-1.5,4);
\draw[gray, dashed] (-2,2) -- (-1,4);
\draw[gray, dashed] (-2,2) -- (0.5,4);
\draw[thick] (-1.5,4) -- (-1,2);
\draw[thick] (-1,4) -- (-0.5,2);
\draw[thick] (0.5,4) -- (1,2);
\draw (4,2) node[right,text width=5.5cm]
{ Each vertex in $\Gamma_Q(v) \setminus u$ is of the form $\{1,j\}$ for some $j \in \{2,\dots,d\}$ and has a unique neighbour $\{j\}$ in $\Gamma_Q(u) \setminus v$.};
\end{tikzpicture}
\normalsize
\caption{\mdseries Matching between $\Gamma_Q(v) \setminus u$ and $\Gamma_Q(u) \setminus v$ with corresponding $uv$ paths. \normalsize}
\label{fig:d22n}
\end{figure}
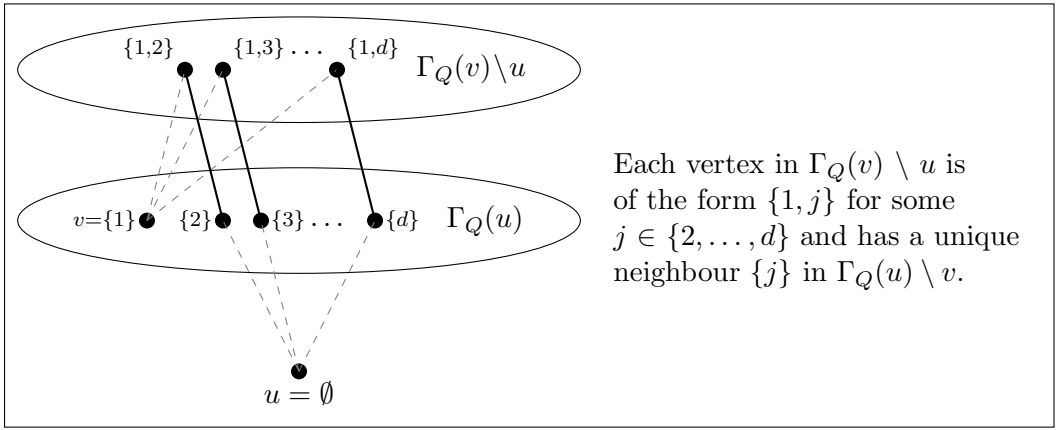
\floatstyle{boxed}
\restylefloat{figure}
We say a subgraph $R$ of $G$ is a \emph{spanning} subgraph if $V(R)=V(G)$.
\begin{definition}\label{def:alp}
Let $0<\alpha<1$ be a constant. Let $R$ be a spanning subgraph of $Q=Q^d$. We say a vertex $u\in R$ is $\alpha$-\emph{locally-connected} in $R$ if

\begin{enumerate}
\item
   its degree in $R$ is at least $\alpha d$, that is $|\Gamma_R(u)|\geq\alpha d$,
\item
  for each $v \in \Gamma_Q(u)$ there exists a matching in $R$ of size at least $\alpha d$ between
$\Gamma_Q(u) \setminus v$ and $\Gamma_Q(v) \setminus u$, 
\item
  For all but at most $\alpha^{-1}$ vertices $v \in \Gamma_Q(u)$ there exists a matching in $R$ of size at least $\alpha d$ between $\Gamma_R(u)\setminus v$ and $\Gamma_R(v)\setminus u$ (and so there are at least $ \alpha d$ internally vertex-disjoint $uv$ paths of length 3 in the graph $R$).  
\end{enumerate}
If all vertices in $R$ are $\alpha$-locally-connected in $R$ then we say that $R$ is \emph{$\alpha$-locally-connected}. A vertex that is not $\alpha$-locally-connected  is $\alpha$-\emph{poorly-connected}. We denote the set of vertices that fail criterion $i$ above as $T^{\alpha}_i$ and the set of $\alpha$-poorly-connected vertices as $T^{\alpha}=T^{\alpha}_1\cup T^{\alpha}_2 \cup T^{\alpha}_3$.
\end{definition}

Once $d>\alpha^{-1}$ the first condition of the definition is implied by the third (so $T^{\alpha}_1 \subseteq T^{\alpha}_3$). We now show that $Q_p$ is $\alpha$-locally-connected with high probability for sufficiently small $\alpha>0$. 

\begin{lemma}\label{sublem:1}
Given $1/2<p<1$, there  are constants $\alpha>0, \rho>0$ such that 
$Q_p$ is $\alpha$-locally-connected with probability at least $1-2^{-\rho d}$.
\end{lemma}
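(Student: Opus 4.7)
The plan is to fix a vertex $u$, show that the probability $u$ is $\alpha$-poorly-connected is at most $2^{-(1+\tau)d}$ for some $\tau>0$, and then union-bound over the $2^d$ vertices to obtain the lemma with $\rho=\tau$. By vertex-transitivity I take $u=\mathbf{0}$ and identify vertices with subsets of $[d]$. The constant $\alpha>0$ will be chosen small at the very end, small enough that each of the three defining conditions of $\alpha$-local-connectivity goes through.

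For condition~1, $|\Gamma_R(u)|\sim\Bin(d,p)$ with $p>\tfrac12$, so Lemma~\ref{lem:bin} immediately gives $\Pro(u\in T_1^\alpha)\le 2^{-(1+\tau)d}$ once $\alpha$ is at most the constant $t$ of that lemma. For condition~2, I would fix $v=\{j\}\in\Gamma_Q(u)$ and use the natural perfect matching of $d-1$ edges in $Q^d$ between $\Gamma_Q(u)\setminus v$ and $\Gamma_Q(v)\setminus u$ shown in Figure~\ref{fig:d22n}; each such edge is independently in $R$ with probability $p$, so the matching in $R$ has size $\Bin(d-1,p)$, and Lemma~\ref{lem:bin} combined with a union bound over the $d$ choices of $v$ gives the required bound.

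Condition~3 is the main obstacle, and the place where the margin $p>\tfrac12$ is genuinely used. The internally vertex-disjoint length-$3$ $uv$-paths for $v=\{j\}$ all have the form $u-\{i\}-\{i,j\}-\{j\}$; each lies in $R$ with probability $p^3$, and since $p^3$ may be below $\tfrac12$, Lemma~\ref{lem:bin} cannot be invoked for a per-$v$ bound of $2^{-(1+\tau)d}$. The saving is that condition~3 tolerates up to $\alpha^{-1}$ bad neighbours. I first reveal $I:=\Gamma_R(u)$ and condition on $|I|\ge td$ (the complement is negligible by condition~1). For each $w=\{i,j\}\in\binom{[d]}{2}$, let $Z_w$ be the indicator that both the edge between $\{i\}$ and $w$ and the edge between $w$ and $\{j\}$ lie in $R$; the $Z_w$ are iid $\Ber(p^2)$, and for $v=\{j\}$ the count of length-$3$ $uv$-paths in $R$ is exactly $N_v:=\sum_{i\in I\setminus\{j\}}Z_{\{i,j\}}\sim\Bin(|I\setminus\{j\}|,p^2)$ conditional on $I$. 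Chernoff (Lemma~\ref{ineq:chern}) then gives $\Pro(N_v<\alpha d\mid I)\le e^{-cd}$ for a constant $c>0$ depending only on $p$ and $t$, provided $\alpha$ is small enough.

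For any $k$ distinct neighbours $v_1=\{j_1\},\ldots,v_k=\{j_k\}$, the sums $N_{v_\ell}$ share only the $\binom{k}{2}$ variables $Z_{\{j_\ell,j_m\}}$; discarding these gives ``private'' counts $N_{v_\ell}^{\mathrm{priv}}:=\sum_{i\in I\setminus\{j_1,\ldots,j_k\}}Z_{\{i,j_\ell\}}\le N_{v_\ell}$, which depend on disjoint sets of $Z$-variables across $\ell$ and are therefore independent. Chernoff applied to each gives $\Pro(N_{v_\ell}^{\mathrm{priv}}<\alpha d\mid I)\le e^{-cd}$ (with the same $c$, up to constants), so
\[
\Pro\bigl(N_{v_1}<\alpha d,\ldots,N_{v_k}<\alpha d\,\big|\,I\bigr)\le e^{-ckd}.
\]
Union-bounding over the $\le d^k$ choices of $k$ neighbours with $k=\lceil\alpha^{-1}\rceil+1$ gives $\Pro(\text{condition 3 fails at }u\mid I)\le d^k e^{-ckd}$, and choosing $\alpha$ small enough that $c\alpha^{-1}>(1+\tau)\ln 2$ forces this to be at most $2^{-(1+\tau)d}$. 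Whenever $N_v\ge\alpha d$, the middle edges of those paths form a matching of size $\ge\alpha d$ in $R$ between $\Gamma_R(u)\setminus v$ and $\Gamma_R(v)\setminus u$, verifying condition~3 for $v$. Combining the three conditions and union-bounding over $u$ completes the proof. The main difficulty is condition~3, where $p^2$ need not exceed $\tfrac12$; the resolution is to strip off the $O(k^2)$ shared $Z$-variables for any $k=O(1)$ failing neighbours to recover independence, and use the slack of $\alpha^{-1}$ allowed failures to turn an $e^{-cd}$ Chernoff bound into a $2^{-(1+\tau)d}$ bound.
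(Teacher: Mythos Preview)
Your proof is correct and takes essentially the same approach as the paper. Conditions~1 and~2 are handled identically; for condition~3 you condition on $I=\Gamma_R(u)$, introduce the indicators $Z_{\{i,j\}}$, strip out the $O(k^2)$ shared variables to obtain independent ``private'' counts for any fixed $k$-set of candidate bad neighbours, apply Chernoff, and union-bound over $k$-sets --- which is exactly the paper's argument with $Y_v$ and $w_1\in S\setminus S^*$, just phrased more explicitly in terms of shared variables.
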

\begin{proof}
Let $t>0,\tau>0$ be as in Lemma \ref{lem:bin} and set $\alpha=\min{t/2,\left\lceil 34/p^4t\right\rceil^{-1}}$. Consider a fixed vertex $u$. We check whether $u$ is $\alpha$-locally-connected.  

\bfseries\noindent 
First condition.
\mdseries
Since $\deg_{Q_p}(u)\sim \Bin(d,p)$,
\begin{equation}
\Pro[\deg_{Q_p}(u)\le td]\le 2^{-(1+\tau)d}.
\label{eqn.1}
\end{equation}
\bfseries\noindent
Second condition.
\mdseries
  For each $v \in \Gamma_Q(u)$ the edges in $Q$ between
  $\Gamma_Q(u) \setminus v$ and $\Gamma_Q(v) \setminus u$ form a matching of size $d-1$.
  The number, $X_{v}$, of these edges that are present in $Q_p$ has distribution  $\Bin(d-1,p)$. Hence 
  \mbox{$\Pro[X_{v} \le t (d-1)] \le 2^{-(1+\tau)(d-1)}$.} Thus for $d$ sufficiently large,
\begin{equation} \label{eqn.2}
  \Pro\left[ X_v< td/2 \mbox{ for some } v \in \Gamma_Q(u) \right]
  \leq d \, 2^{-(1+\tau)(d-1)} \le 2^{-(1+\tau/2)d}.
\end{equation}

\bfseries\noindent
Third condition.
\mdseries
We condition on $d_{Q_p}(u)\ge td$. Let $S \subseteq  \Gamma_Q(u)$ with $|S| \geq t d$, and condition on $\Gamma_{Q_p}(u)= S$.  Let $k \in \mathbb{N}$ and $S^* \subset V_1(u)$ with $|S^*|=k$. For each $v \in  S^*$ let $Y_v$ be the number of paths in $Q_p$ of the form
  $uw_1w_2v$ where $w_1 \in S \setminus S^*$ and $w_2 \in V_2(u)$. Consider as an example (using the subset of $[d]$ notation introduced in Section \ref{sec:antipodal1} to denote our vertices) $u=\emptyset$,  $v=\{1\}$ and $x\in S$ with $x=\{i\}$: the only possible path is $\emptyset-\{i\}-\{1,i\}-\{1\}$, and the last two edges in this path are disjoint from paths from any other choice of $v$ or $x$. In general each edge in $E_2(u)$ is in at most one 3-edge path $uw_1w_2v$ where $w_1 \in S \setminus S^*$ and $v \in S^*$. Also $|S \setminus S^*|>\frac{td}2$ for large $d$ so
  each $Y_v$ is stochastically at least  $\Bin(\left\lceil \frac{td}2\right\rceil,p^2)$, and the random variables $Y_v$ for
  $v \in S^*$ are independent. (For two random variables $X$ and $Y$, we say $X$ is \emph{stochastically} less than $Y$ if $\Pro[X \le x] \ge \Pro[Y \le x]$ for all possible $x$.) 
  
  We use the Chernoff Bounds as in Lemma \ref{ineq:chern}, and set $s=p^2 \left\lceil \frac{td}2\right\rceil=\Ex[\Bin\left(\left\lceil \frac{td}2\right\rceil,p^2\right)]$ to get:

  \[ \Pro \left[Y_v \le \frac14 p^2 td \right] \leq \Pro\left[\Bin\left(\left\lceil \frac{td}2\right\rceil,p^2\right)\le \frac s2\right] \leq e^{-\frac1{16} p^4 t d} =: q. \]
  
Let $A(S^*)$ be the event that $Y_v\le \frac14 p^2 td$ for each vertex in $S^*$. Thus $\Pro[A(S^*)]\le q^k$.  There are $\binom{d}{k}$ possible choices of the set $S^*$ so, by the union bound, for sufficiently large $d$, the probability that $A(S^*)$ is true for one of these $k$-sets  is at most
\[\binom{d}{k}q^k \le d^ke^{-\frac1{16}p^4tkd}\le e^{-\frac1{17}p^4tkd}\]
So we choose $k =\left\lceil 34/p^4t\right\rceil$ and with failure probability at most \mbox{$e^{-2d}$}  there is no subset of $V_1(u)$ of size $k$ with every vertex having the property $Y_v \le \frac14 p^2 td.$ Now we can remove the conditioning that $\Gamma_{Q_p}(u)= S$, and we see that, conditional on $d_{Q_p}(u)\ge td$, the probability that $A(S^*)$ holds for some $k$-subset of $V_1(u)$ is at most $e^{-2d}$.

Now $\Pro[u \in T^{\alpha}]\le \Pro[u \in T_1^{\alpha}]+\Pro[u \in T_2^{\alpha}]+\Pro[u \in T_3^{\alpha}|u \notin T_1^{\alpha}]$. 
Using equation (\ref{eqn.1}) $\Pro[u \in T^{\alpha}_1]\le 2^{-(1+\tau)d}$, and for large $d$, from equation (\ref{eqn.2}) $\Pro[u \in T^{\alpha}_2]\le 2^{-(1+\tau/2)d}$ and from the previous section $\Pro[u \in T^{\alpha}_3|u \notin T_1^{\alpha}]\le e^{-2d}$. Thus, for large $d$, the probability that $u$ is not $\alpha$-locally-connected is at most $2^{-(1+\rho)d}$ where $\rho=\tau/3$; and by the union bound, $Q_p$ is $\alpha$-locally-connected with probability at least $1-2^{-\rho d}$.
\end{proof}

We denote by $B^{M}_{r}(u)$ 
the ball of radius $r$ centred on vertex $u$ with the capacities scaled as in $\cN^M(u)$. Thus $B^{M}_r(u)$ is $\cN^M(u)$ restricted to the vertex layers up to $V_r(u)$. We say $\cN$ has the \emph{local escape property} with parameter $M$ if, for each vertex $u$, we can route a balanced feasible flow of volume $1$ between $u$ and $\Gamma_Q(u)$ using only paths in  $B_{3}^M(u)$. 

We now prove two deterministic lemmas.

\begin{lemma}\label{lem:local}
Given $0<\alpha<1$ there exists a constant $M_1=M_1(\alpha)>0$ such that the following holds. 
Let the spanning subgraph $R$ of $Q$ be $\alpha$-locally-connected, and let $\mathcal{N}$ be the network formed by giving capacity 1 to each edge of $R$. Then  $\mathcal{N}$ has the local escape property with parameter $M_1$.
\end{lemma}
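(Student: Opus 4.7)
The plan is to construct the desired balanced flow by hand, partitioning the $d$ sinks in $\Gamma_Q(u) = V_1(u)$ into \emph{typical} sinks---those $v$ for which criterion~3 of Definition~\ref{def:alp} (applied to $u$ at $v$) yields a matching of size $\geq \alpha d$ in $R$ between $\Gamma_R(u)\setminus v$ and $\Gamma_R(v)\setminus u$, equivalently $\alpha d$ internally vertex-disjoint $uv$-paths of length $3$ in $R$---and \emph{exceptional} sinks, of which there are at most $\alpha^{-1}$ (again by criterion~3 at $u$). A balanced flow of volume $1$ must send $1/d$ units to each sink.

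For each typical sink $v$ I fix a set $\mathcal{P}_v$ of $\lceil \alpha d \rceil$ such internally disjoint length-three $uv$-paths in $R$ and distribute $1/d$ units evenly among them, so each path carries at most $1/(\alpha d^2)$. Internal disjointness within each $\mathcal{P}_v$ forces every layer-$1$ edge $uw_1$ to lie in at most one path of $\mathcal{P}_v$, giving total typical flow on $uw_1$ at most $(d-1)/(\alpha d^2) < 1/(\alpha d)$; meanwhile, each layer-$2$ edge $w_1w_2$ (with $w_1 \in V_1(u)$, $w_2 \in V_2(u)$) is the middle of exactly one length-three $uv$-path in $Q^d$, so receives at most $1/(\alpha d^2)$ typical flow.

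For each exceptional sink $v$ I route $1/d$ units from $u$ to $v$ using paths of length at most $5$ lying in $R \cap B^{M_1}_3(u)$. Criterion~2 applied to $(u,v)$ yields a matching of size $\geq \alpha d$ in $R$ between $\Gamma_Q(u)\setminus v$ and $\Gamma_Q(v)\setminus u$, consisting of edges $w_1w_2 \in R$, each a potential middle edge of a $3$-path $u{-}w_1{-}w_2{-}v$. When $uw_1 \notin R$ (resp.\ $w_2v \notin R$) I splice in a length-three $R$-detour from $u$ to $w_1$ (resp.\ from $w_2$ to $v$, which passes through $V_3(u)$); such detours exist for all but $\alpha^{-1}$ choices of $w_1$ by criterion~3 at $u$ (resp.\ of $w_2$ by criterion~3 at $v$). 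Hence $\Theta(d)$ matched pairs furnish a $uv$-path in $R \cap B^{M_1}_3(u)$ of length $\leq 5$, and I distribute the $1/d$ units evenly over these, each path carrying $\Theta(1/d^2)$. Since there are at most $\alpha^{-1}$ exceptional sinks, the total exceptional-flow contribution to any layer-$m$ edge is $O(1/d^m)$ for $m \in \{1,2,3\}$.

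Finally I verify feasibility. A layer-$m$ edge in $B^{M_1}_3(u)$ has capacity $M_1/|E_m| = \Theta(M_1/d^m)$. Combining the typical and exceptional contributions, the flow through each such edge is bounded by $C(\alpha)/d^m$ for a constant $C(\alpha)$ depending only on $\alpha$, so choosing $M_1 = M_1(\alpha)$ sufficiently large makes the flow feasible; by construction each sink receives exactly $1/d$, so the flow is balanced. The main obstacle is the exceptional-sink construction: one must spread the $1/d$ units over $\Theta(d)$ disjoint short paths so that the per-edge flow on layer-$2$ and layer-$3$ edges (capacities only $\Theta(M_1/d^2)$ and $\Theta(M_1/d^3)$) is not exceeded. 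This is possible because only $O(1)$ exceptional sinks exist, and criteria~1--3 together supply enough length-$\leq 5$ paths within $R \cap B_3(u)$.
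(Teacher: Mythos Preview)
Your overall strategy matches the paper's, but the exceptional-sink construction has a real gap at layer~$3$. You route $1/d$ units to an exceptional sink $v$ along $\Theta(d)$ paths, one per matched pair $(w_1,w_2)$, each path carrying $\Theta(1/d^2)$. When $w_2v\notin R$ you splice in a \emph{single} length-three $R$-detour $v\,a\,b\,w_2$. Since $w_2\in V_2(u)$ has only two $Q$-neighbours in $V_1(u)$, for most $w_2$ the intermediate vertex $b$ lies in $V_3(u)$, so the detour uses a layer-$3$ edge of $u$ (indeed two of them, $w_2b$ and $ba$). That edge then carries $\Theta(1/d^2)$ flow from this one path. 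As there are only $O(1)$ exceptional sinks, a fixed layer-$3$ edge is hit by $O(1)$ such paths, giving total exceptional flow $\Theta(1/d^2)$ on it---not $O(1/d^3)$ as you assert. Since the capacity is $M_1/|E_3|=\Theta(M_1/d^3)$, no constant $M_1$ makes this feasible.

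The fix is the one the paper uses: do not pick a single detour. Criterion~3 at $v$ gives $\alpha d$ internally vertex-disjoint length-$3$ $v w_2$-paths in $R$; spread the $\Theta(1/d^2)$ units evenly over all of them, so each carries $\Theta(1/d^3)$. Because every edge of $E_2(v)$ lies on at most one length-$3$ path $v\,a\,b\,w_2$ ranging over all $w_2\in\Gamma_Q(v)$, the total flow on any $E_2(v)$-edge (hence on any $E_3(u)$-edge) from a single exceptional $v$ is $\Theta(1/d^3)$; summing over the $\le\alpha^{-1}$ exceptional $v$ preserves the bound. (A minor separate point: your ``length $\le 5$'' claim is also off---if both $uw_1\notin R$ and $w_2v\notin R$ the spliced path has length~$7$---but this is harmless since all vertices still lie in $B_3(u)$.)
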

\begin{proof}

Consider vertex $u$. Let $S_1=S_1(u)=\Gamma_R(u)$,  let $S_3=S_3(u)$ be the set of vertices in $\Gamma_Q(u) \setminus S_1$ for which the number of paths of length 3 in $R$ to $u $ is greater than $\alpha d$,  and denote the remainder ($\Gamma_Q(u)\setminus (S_1 \cup S_3)$) by $S^*=S^*(u)$. Since $R$ is $\alpha$-locally-connected we have $|S_1| \ge \alpha d$ and $|S^*|\le\left\lfloor \alpha^{-1}\right\rfloor =: N$. We now route flows as follows.

For $v \in S_1$ we route a flow of $d^{-1}$ along edge $u$$v$. For $v \in S_3$ we route a flow of $d^{-1}$ split evenly between $\left\lceil \alpha d\right\rceil $ of the paths of length 3 from $u$ to $v$ (chosen arbitrarily). The total of these flows is at most $\left( \alpha d\right)^{-1}d^{-1}<\alpha^{-1}|E_2|^{-1}$ in any edge in $E_2=E_2(u)$ and at most $(1+\alpha^{-1})d^{-1}$ in any edge in layer $E_1=E_1(u)$. Thus we can route flows of volume $d^{-1}$ to all vertices in $S_1 \cup S_3$ in $B_{u}^M(2)$ for $M_0=1+ \alpha^{-1}$. 

For $v \in S^*$ we know there is a  matching in $R$ of size at least $\alpha d$ between $\Gamma_Q(u) \setminus v$ and $\Gamma_Q(v) \setminus u$, so there is a matching in $R$ of size at least $\alpha d-|S^*(u)|-|S^*(v)|\ge \alpha d - 2N$ between $S_1(u)\cup S_3(u)$ and $S_1(v)\cup S_3(v)$. For large enough $d$, we may pick a matching of size  $\left\lceil \alpha d/2\right\rceil$, say $v_1w_1,v_2w_2,\dots$ where $v_i \in S_1(v)\cup S_3(v)$ and $w_i \in S_1(u)\cup S_3(u)$.  We  route a flow of volume $\left\lceil  \alpha d/2\right\rceil^{-1}d^{-1}$ along each of these edges. We route the flows from $u$ to the $v_i$ by scaling the flows already found above by a factor $1+\left\lceil \alpha d/2\right\rceil^{-1}$ and from the $w_i$ to $v$ using the routes already found by the same method and using the same scaling factor. There are at most $N$ vertices in $S^*$ so the volume from all the flows from $u$ to $S^*$ is at most $NM_0\left\lceil  \alpha d/2\right\rceil^{-1}d^{-2}\le \frac{M_0N}{\alpha}|E_3|^{-1}$ in any edge in $E_3=E_3(v)$ and $NM_0\left\lceil  \alpha d/2\right\rceil^{-1}d^{-1}\le \frac{M_0N}{\alpha}|E_2|^{-1}$ in any edge in layer $E_2$ and $N\left\lceil  \alpha d\right\rceil^{-1}d^{-1}\le \frac{M_0N}{\alpha}|E_1|^{-1}$ in any edge in $E_1$. Thus we can route flows of volume 1 to all vertices in $\Gamma_Q(u)$ in $B_{3}^{M_1}(u)$ for $M_1= \frac{M_0N}{\alpha}$.
\end{proof}

\begin{lemma}\label{lem:prop}
 
Let $R$ be a spanning subgraph of $Q$ and let $\mathcal{N}$ be the network formed by giving a capacity of 1 to each edge of $R$. Assume that $\mathcal{N}$ has the local escape property with parameter $M_1$. Then, for $d$ sufficiently large, for each $u \in V(Q)$ there exists a balanced flow of volume 1 between $u$ and $V_{\ell}(u)$ in $B_{u}^{M}({\ell}+2)$, where $M=7 M_1$.
\end{lemma}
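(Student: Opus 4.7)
The plan is to construct the flow as a superposition of local escape flows, suitably scaled, applied at every vertex in the layers $V_0(u), V_1(u), \ldots, V_{\ell-1}(u)$. For each such $v$, let $L_v$ denote the balanced unit-volume flow from $v$ to $\Gamma_Q(v)$ supplied by the local escape property, with paths supported in $B^{M_1}_3(v) \subseteq B_{\ell+2}^M(u)$. Choose non-negative scalars $X_0, X_1, \ldots, X_{\ell-1}$, to be determined, and consider the candidate flow
\[ f \;=\; \sum_{m=0}^{\ell-1} X_m \sum_{v \in V_m(u)} L_v. \]
Because the construction is invariant under the stabilizer of $u$ in $\mathrm{Aut}(Q^d)$, which acts transitively on each layer $V_m(u)$, every vertex in a given layer has the same net outflow in $f$, and the balance conditions reduce to one equation per layer: $X_0 - X_1 = 1$ at $u$, and $d X_m = m X_{m-1} + (d-m) X_{m+1}$ for $1 \le m \le \ell - 1$, with the convention $X_\ell := 0$. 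Telescoping gives the explicit solution $X_m = d \sum_{k=m+1}^{\ell} 1/|E_k|$; a direct check shows that each $w \in V_\ell(u)$ then receives net inflow $\ell X_{\ell-1}/d = 1/\binom{d}{\ell}$, so $f$ is a balanced flow from $u$ to $V_\ell(u)$ of volume $1$.

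To verify feasibility, I first estimate the $X_m$. Since $|E_{k+1}|/|E_k| = (d-k)/k$, consecutive terms in the series for $X_m$ shrink by a factor $k/(d-k) \le \ell/(d-\ell) = o(1)$, so $X_m \le (1+o(1))\,d/|E_{m+1}|$ uniformly for $0 \le m \le \ell-1$. For each edge $e \in E_j(u)$ of $R$ with $1 \le j \le \ell+2$, I write $f(e) = \sum_v X_{m_v} L_v(e)$, summed over those $v$ with $e \in B_3(v)$, and apply $L_v(e) \le M_1/|E_{i_v}|$, where $i_v \in \{1,2,3\}$ is the $v$-layer of $e$. Splitting the sum according to $i_v$, each of the three cases has an \textbf{extremal inward} contribution of size $(1+o(1))\,M_1/|E_j|$: for $i_v = 1$ from the endpoint $w_1 \in V_{j-1}(u)$ of $e$; for $i_v = 2$ from the $j-1$ vertices in $V_{j-2}(u)$ adjacent to $w_1$ (using $|E_{j-1}|/|E_j| = (j-1)/(d-j+1)$ and $|E_2| = d(d-1)$); and for $i_v = 3$ from the $\binom{j-1}{2}$ vertices in $V_{j-3}(u)$ at $v$-distance $2$ from $w_1$ (using $|E_{j-2}|/|E_j| \sim j^2/d^2$ and $|E_3| \sim d^3/2$). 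The remaining terms in each case carry an extra factor $O(j/d)$ or smaller and are $o(M_1/|E_j|)$; summing gives $f(e) \le (3+o(1))\,M_1/|E_j| \le 7 M_1/|E_j|$ for $d$ sufficiently large, which matches the scaled capacity $M/|E_j|$ with $M = 7 M_1$.

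The main obstacle is the bookkeeping in the feasibility step. The edge-counts $|E_k|$ vary by large multiplicative factors as $k$ changes, so contributions from $v$'s at different $u$-layers must be weighed carefully; the key point is that $|E_k|$ grows geometrically with ratio $\Theta(d/k)$ for $k = o(d)$, which makes the relevant internal sums telescope and ensures that only three \emph{extremal inward} terms (one per $i_v \in \{1,2,3\}$) are of order $M_1/|E_j|$, the rest being lower-order corrections that vanish as $d \to \infty$.
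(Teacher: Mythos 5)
Your proof is correct, and it takes a cleaner, more explicit route than the paper's. The paper also builds the flow from the local-escape flows $L_v$ at vertices in layers $0,\dots,\ell-1$, but it works layer by layer: for each $m$, it scales $L_v$ at $v\in V_{m-1}(u)$ by $\tfrac{d}{d-m+1}|V_{m-1}|^{-1}$, decomposes into paths, keeps only the ``forward'' component into $V_m(u)$, and then observes that these glue into a layer-to-layer balanced flow. Your approach instead superimposes the \emph{entire} $L_v$ with coefficients $X_m$ determined by solving the balance recurrence $dX_m = mX_{m-1}+(d-m)X_{m+1}$ in closed form as $X_m = d\sum_{k>m}^{\ell}|E_k|^{-1}$, then checks that the boundary values $X_0-X_1=1$ and $\ell X_{\ell-1}/d=\binom{d}{\ell}^{-1}$ come out right. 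This is genuinely different and arguably tidier: no path decomposition or discarding of backward flow is needed, and the coefficients are explicit. Your feasibility analysis also arrives at the sharper constant $(3+o(1))M_1/|E_j|$ versus the paper's $6M_1(1+O(m/d))/|E_j|$; the paper's constant is looser partly because it uses $\binom{d}{k}^{-1}$ rather than $|E_k|^{-1}=(k\binom{d}{k})^{-1}$ for the per-edge capacity in layer $k$ of a ball, which overcounts by a factor $k\le 3$, but this is harmless since $M=7M_1$ absorbs it either way.

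One small imprecision: the phrase ``the construction is invariant under the stabilizer of $u$'' is not literally true, since the flows $L_v$ furnished by the local escape property are arbitrary and need not be permuted to one another by automorphisms fixing $u$. What is true, and what you actually use, is that the \emph{net outflow pattern} of each $L_v$ is forced by the definition of a balanced $v$--$\Gamma_Q(v)$ flow ($+1$ at $v$, $-1/d$ at each neighbour, $0$ elsewhere), so $\sum_{v\in V_m(u)}L_v^+(w)$ depends only on the $u$-layer of $w$. Since you then write down the resulting per-layer balance equations explicitly, the argument stands; I would simply replace the symmetry remark with this direct observation.
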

\begin{proof}
Fix $u \in V(Q)$. Let $1 \le m \le \ell$ and let $v \in V_{m-1}(u)$. By our assumption on 
$\mathcal{N}$, there is a balanced flow of volume $\frac{d}{d-m+1}|V_{m-1}|^{-1}$ from  $v$ to all of its neighbours in $Q$  in the ball $B^{M_2}_{3}(v)$ where $M_2=\frac{d}{d-m+1}|V_{m-1}|^{-1}M_1$. We decompose this flow into flows along paths of length at most 7 and only consider those flows to neighbours of $v$ in $V_m$. This results in separate flows of volume $\frac{1}{d-m+1}|V_{m-1}|^{-1}$ from $v$ to each of its neighbours in $V_m$. Repeating this process for every vertex in $V_{m-1}$ gives us a flow to every vertex in $V_m$ of volume $\frac{m}{d-m+1}|V_{m-1}|^{-1}=|V_m|^{-1}$. Thus a balanced flow of volume 1 exists between $V_{m-1}$ and $V_m$ if each edge has the capacity given by the addition of the capacities of that edge in the balls $B^{M_1}_{3}(v)$ for all $v \in V_{m-1}$ scaled by the factor $\frac{d}{d-m+1}|V_{m-1}|^{-1}$. We repeat this process for all vertex layers 1 to $\ell$ to get a balanced flow of volume 1 from $u$ to $V_{\ell}(u)$.

To calculate an upper bound to the total capacity required in edge $e=xy$ ($x \in V_{m-1}, y \in V_m$) we need to consider all of the balls centred on vertices a distance at most 2 from $x$ or $y$. The capacity required in $e$ for one of these balls is the product of the scaling factor applied to the ball (the factor $\frac{d}{d-m+1}|V_{m-1}|^{-1}M_1$ for a ball centred on a vertex in $V_{m-1}$) and the factor applied to edge $e$ in the ball. ($\binom{d}{k}^{-1}$ if $e$ is in edge-layer $k$ in the ball.) The vertices within a distance 2 of $x$ can be partitioned into 6 sets.
\begin{align*}
S_1 &= \{x\},~|S_1|=1\\
S_2 &= \{z \in V_{m-2}:d_Q(xz)=1\},~\text{where }|S_2|=m-1\\
S_3 &= \{z \in V_{m}:d_Q(xz)=1\},~\text{where }|S_3|=d-m+1\\
S_4 &= \{z \in V_{m-3}:d_Q(xz)=2\},~\text{where }|S_4|=\binom{m-1}2\\
S_5 &= \{z \in V_{m-1}:d_Q(xz)=2\},~\text{where }|S_5|=(m-1)(d-m+1)\\
S_2 &= \{z \in V_{m+1}:d_Q(xz)=2\},~\text{where }|S_6|=\binom{d-m+1}2
\end{align*}
We denote by $C_i$ the total capacity required in edge $e$ for the balls centred on vertices in set $S_i$ and
\begin{align*}
C_1 &= |S_1| \frac{d}{d-m+1}|V_{m-1}|^{-1}\frac{M_1}{d}=M_1 (|V_{m-1}|d)^{-1}(1+O(m/d))\\
C_2 &= |S_2| \frac{d}{d-m+2}|V_{m-2}|^{-1}M_1\binom{d}{2}^{-1}=2M_1 (|V_{m-1}|d)^{-1}(1+O(m/d))\\
C_3 &= |S_3| \frac{d}{d-m}|V_{m}|^{-1}M_1\binom{d}{2}^{-1}=O\left( \frac md M_1 (|V_{m-1}|d)^{-1}\right)\\
C_4 &= |S_4| \frac{d}{d-m+3}|V_{m-3}|^{-1}M_1\binom{d}{3}^{-1}=3M_1 (|V_{m-1}|d)^{-1}(1+O(m/d))\\
C_5 &= |S_5| \frac{d}{d-m+1}|V_{m-1}|^{-1}M_1\binom{d}{3}^{-1}=O\left( \frac md M_1 (|V_{m-1}|d)^{-1}\right)\\
C_6 &= |S_6| \frac{d}{d-m-1}|V_{m+1}|^{-1}M_1\binom{d}{3}^{-1}=O\left( \frac md M_1 (|V_{m-1}|d)^{-1}\right)
\end{align*}
And the sum of these is $\sum C_i=6M_1 (|V_{m-1}|d)^{-1}(1+O(m/d))$. We do a similar exercise for balls centred on vertices a distance at most 2 from $y$ and get the total capacity required for all these balls is $6M_1 (|V_{m}|d)^{-1}(1+O(m/d))$. We note that $|V_m|^{-1}=O\left(m/d |V_{m-1}|^{-1}\right)$, so the total capacity required in edge $e$ from all balls centred on vertices a distance at most 2 from either $x$ or $y$ is $6M_1 (|V_{m-1}|d)^{-1}(1+O(m/d))$. Putting $M=7M_1$ gives the result.
\end{proof}
\noindent
Putting Lemmas \ref{sublem:1}, \ref{lem:local} and \ref{lem:prop} together, we obtain the following lemma.
\vspace{8pt}

\begin{lemma}
\label{lem:main1p}
Given $1/2<p<1$ ,  there exist  constants $M>0$, and $\rho >0$  such that for $d$ sufficiently large, with failure probability at most $2^{-\rho d}$, for all vertices $u$ in $Q_p$ there exists a balanced flow of volume 1 in the ball $B^M_{{\ell}+2}(u)$ from $u$ to $V_{\ell}(u)$.\qed
\end{lemma}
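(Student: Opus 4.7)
The statement of Lemma \ref{lem:main1p} is essentially a clean combination of the three preceding lemmas, so my plan is to chain them together without introducing any new machinery.

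The plan is as follows. First, I apply Lemma \ref{sublem:1} with the given $p \in (1/2, 1)$ to obtain constants $\alpha = \alpha(p) \in (0,1)$ and $\rho = \rho(p) > 0$ such that, with probability at least $1 - 2^{-\rho d}$, the random subgraph $Q_p$ is $\alpha$-locally-connected. Call this event $\mathcal{E}$. I then work deterministically on $\mathcal{E}$: identify $Q_p$ with a spanning subgraph $R$ of $Q = Q^d$ and let $\mathcal{N}$ denote the network obtained by giving each edge of $R$ capacity $1$. This is exactly the setting needed to invoke Lemma \ref{lem:local}, which (since $R$ is $\alpha$-locally-connected) produces a constant $M_1 = M_1(\alpha)$ such that $\mathcal{N}$ has the local escape property with parameter $M_1$.

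Next, having secured the local escape property for $\mathcal{N}$, I apply Lemma \ref{lem:prop} with this $\mathcal{N}$ and $M_1$. For $d$ large enough, Lemma \ref{lem:prop} delivers, for every vertex $u \in V(Q)$, a balanced flow of volume $1$ from $u$ to $V_\ell(u)$ that lives inside $B^M_{\ell+2}(u)$, where $M = 7 M_1$. Note that $M$ depends only on $p$ (through $\alpha$ and hence $M_1$), and the whole argument from $\mathcal{E}$ onward is deterministic, so the only source of failure is the event that $Q_p$ is not $\alpha$-locally-connected. Therefore, with the $M$ just produced and the $\rho$ from Lemma \ref{sublem:1}, the conclusion of Lemma \ref{lem:main1p} holds with failure probability at most $2^{-\rho d}$ for all sufficiently large $d$.

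There is no real obstacle, since all the work has been done in Lemmas \ref{sublem:1}, \ref{lem:local}, \ref{lem:prop}; the only thing worth being careful about is bookkeeping of constants, namely making sure that $\alpha$ (hence $M_1$ and $M$) and $\rho$ are chosen in this order so that each subsequent application is legitimate, and that the probabilistic and deterministic parts of the argument are cleanly separated.
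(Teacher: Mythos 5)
Your proposal is correct and matches the paper exactly: the paper states this lemma with the one-line remark that it follows by ``putting Lemmas \ref{sublem:1}, \ref{lem:local} and \ref{lem:prop} together,'' which is precisely the chain you spell out. The order of constant selection ($\alpha$ and $\rho$ from Lemma \ref{sublem:1}, then $M_1$ from Lemma \ref{lem:local}, then $M=7M_1$ from Lemma \ref{lem:prop}) and the separation of the probabilistic step from the deterministic steps are exactly as intended.
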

We can now prove the main result of this section, Lemma \ref{lem:main1}. In Lemma \ref{lem:main1p} we proved the result in the special case when $C \sim \Ber(p)$, the Bernoulli distribution with parameter $p$.  

\begin{lemma}\label{lem:main1}
For a random variable $C$ with $\Pro[C>0]>1/2$ there exist constants $ M>0$ and $\rho>0$ such that the following holds with failure probability at most $2^{-\rho d }$.  For $\mathcal{N} \in \mathcal{G}(Q,C)$, for all vertices $u$ there exists a balanced flow of volume $\Ex[C]$ in the ball $B^M_{{\ell}+2}(u)$ from $u$ to $V_{\ell}(u)$.
\end{lemma}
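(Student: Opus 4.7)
The plan is to reduce the general-capacity case to the Bernoulli case already handled by Lemma \ref{lem:main1p}. First, since $\Pro[C>0]>1/2$ and $\{C>0\}=\bigcup_{n\ge 1}\{C\ge 1/n\}$, by continuity of probability there exists $c_0>0$ such that $p':=\Pro[C\ge c_0]>1/2$. Call an edge \emph{good} if $c_e\ge c_0$. The subgraph $R$ of $Q^d$ consisting of good edges is distributed exactly as $Q_{p'}$, since the edge-capacities are independent and each edge is independently good with probability $p'$.

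Next, I would apply Lemma \ref{lem:main1p} with parameter $p'$ to the Bernoulli network on $R$ (where every good edge has capacity $1$). This yields constants $M'>0$ and $\rho>0$ such that, except on an event of probability at most $2^{-\rho d}$, for every vertex $u$ in $Q^d$ there is a balanced flow $g_u$ of volume $1$ from $u$ to $V_\ell(u)$ in $B^{M'}_{\ell+2}(u)$ computed with respect to the Bernoulli capacities, i.e.\ with each good edge $e$ in layer $E_m(u)$ having scaled capacity $M'/|E_m|$ and non-good edges not used.

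Finally, I would transfer the flow back to the original network. Set $M:=M'\Ex[C]/c_0$ (we may take $M$ to be the ceiling of this quantity) and consider the flow $\Ex[C]\cdot g_u$; this is a balanced flow of volume $\Ex[C]$ from $u$ to $V_\ell(u)$, supported on good edges, and its flow through any good edge $e$ in layer $E_m(u)$ (with $m\le \ell+2$) is at most
\[
\Ex[C]\cdot \frac{M'}{|E_m|} \;=\; \frac{M c_0}{|E_m|} \;\le\; \frac{M c_e}{|E_m|},
\]
which is precisely the scaled capacity of $e$ in $\cN^M(u)$. Hence the flow is feasible in $B^M_{\ell+2}(u)$. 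Taking a union bound over the failure event gives the required conclusion with failure probability at most $2^{-\rho d}$.

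The only real subtlety is the first step: one must extract, from the general distribution, a Bernoulli sub-problem in which the edge probability still exceeds $1/2$ so that Lemma \ref{lem:main1p} is applicable. The factor $M'\Ex[C]/c_0$ is harmless since $\Ex[C]<\infty$ and $c_0$ depends only on the distribution of $C$, so $M$ is indeed a constant independent of~$d$.
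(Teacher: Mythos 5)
Your proposal is correct and follows essentially the same route as the paper's proof: pick $c_0>0$ with $\Pro[C\ge c_0]>1/2$, reduce to the Bernoulli network on the ``good'' edges (which the paper calls $\mathcal{N}^*$ with threshold $c^*$), invoke Lemma~\ref{lem:main1p}, and then rescale the resulting flow by $\Ex[C]$ with the new constant $M=M'\Ex[C]/c_0$. Your write-up is slightly more explicit about the feasibility check, but the argument is the same.
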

\begin{proof}
Since $\Pro[C>0] >1/2$, there is $c^*>0 $ such that $\Pro[C \ge c^*]>1/2$. We denote $\Pro[C \ge c^*]$ by $p^*$. We now consider the network $\mathcal{N}^*$ which is the network $\mathcal{N}$ with edge-capacities reduced as follows. For an edge $e$ with $C_{\mathcal{N}}(e)<c^*$ we put $C_{\mathcal{N}^*}(e)=0$ and for an edge $e$ with $C_{\mathcal{N}}(e)\ge c^*$ we put $C_{\mathcal{N}^*}(e)=c^*$. Thus the edge-capacities of $\mathcal{N}^*$ have distribution $c^*\Ber(p^*)$ and the capacity of each edge in $G^*$ is at most its capacity in $G$. From Lemma \ref{lem:main1p} we know that there exists $M^{\prime}$ and $\rho>0$ such that with failure probability at most $2^{-\rho d}$, for all vertices $u$, there exists a balanced flow of volume 1 in the ball $B^{M^{\prime}}_{{\ell}+2}(u)$ from $u$ to $V_{\ell}(u)$. Putting $M=\frac{{M^{\prime}}\Ex[C]}{c^*}$ we get the result.
\end{proof}



%
\section{Antipodal flows in the middle part of a flow}\label{sec:antipodal2}

Our aim in this section is to show that, with failure probability $O(e^{-d^2})$, there is a $d^{-2}$-near-balanced flow of volume $(1+\epsilon)^{-1}\Ex[C]$ from $V_{\ell}(\mathbf{0})$ to $V_{\ell}({\mathbf{1}})$ in our scaled network $\mathcal{N}({\mathbf{0}})$. (Recall the definitions of $\mathcal{N}({\mathbf{0}})$ and near-balanced flows from Section \ref{sec:structurelower}.)  

Once again we do most of the work with the Bernoulli distribution $C \sim \Ber(p)$ for $0<p<1$. (Note that we do not now require  $p>1/2$.)  

The proof proceeds from layer to layer by showing (in Lemma \ref{lem:n3}) that, with high probability, a $d^{-3}$-near-balanced flow can be routed across  edge layer $E_m$ which forms a bipartite network $\mathcal{B}_m$. Lemma \ref{lem:main2p} puts together the flows across the layers to find the desired flow from $V_{\ell}(\mathbf{0})$ to $V_{\ell}(\mathbf{1})$. In preparation we need the following two lemmas.

In the proof of Lemma \ref{lem:n3} we may have, for each $m$, small subsets of $V_m$ which do not have some property we would like. The following lemma bounds the impact that these `bad' vertices can have. 

\begin{lemma} \label{lem:bad}
Let ${\ell} \le m \le d-{\ell}$;  let $\{X_i:1\le i\le\binom{d}{m}\}$ be a family of independent random variables taking values 0 or 1 where $\Pro[X_i=1]\le d^{-100}$ for all $i$, and let $S=\sum_i X_i$. Then, for large $d$,
 \[\Pro\left[S \ge \binom{d}{m}d^{-99}\right]\le e^{-d^2}. \]
\end{lemma}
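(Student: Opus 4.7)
The plan is to apply a Chernoff-type concentration inequality (Lemma \ref{ineq:chern} suffices) and then to use the hypothesis $\kappa > 1/2$ to verify that $\binom{d}{m}$ is large enough for the resulting tail bound to beat $e^{-d^2}$.

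Write $n := \binom{d}{m}$. I would first note that $\Ex[S] \le n d^{-100}$, so the target $n d^{-99}$ exceeds $\Ex[S]$ by roughly a factor of $d$; in particular the deviation $a := n d^{-99} - \Ex[S]$ is at least $\tfrac{1}{2} n d^{-99}$ for $d$ large. Since each $X_i \in \{0,1\}$, Lemma \ref{ineq:chern} applied to the sequence $X_1,\ldots,X_n$ gives
\[\Pro[S \ge n d^{-99}] \le e^{-a^2/(2n)} \le \exp\!\left(-n d^{-198}/8\right).\]
Thus it suffices to show $n \ge 8 d^{200}$ for large $d$.

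For this final step, I would use that $\binom{d}{k}$ is unimodal and symmetric in $k$, so $n = \binom{d}{m} \ge \binom{d}{\ell}$ in the range $\ell \le m \le d - \ell$. The standard estimate $\binom{d}{\ell} \ge (d/\ell)^\ell$ combined with $\ell = \lfloor d^\kappa \rfloor$ gives $\binom{d}{\ell} \ge d^{(1-\kappa)\lfloor d^\kappa \rfloor}$, which grows faster than any fixed polynomial in $d$ because $\kappa > 1/2$ forces $\ell \to \infty$ quickly enough (in fact, $\ell \ge \sqrt{d}$ would already do). This is the only place the hypotheses $\kappa > 1/2$ and $\ell \le m \le d - \ell$ enter the argument. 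Neither step presents any real obstacle: the Chernoff bound and the binomial estimate are both routine, and the hypothesis $\kappa > 1/2$ gives far more room than the argument actually consumes.
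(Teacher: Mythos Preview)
Your proof is correct and essentially identical to the paper's: both apply Lemma~\ref{ineq:chern} with $a \ge \tfrac12 n d^{-99}$ to obtain $\exp(-n d^{-198}/8)$, and then bound $n=\binom{d}{m}\ge\binom{d}{\ell}\ge (d/\ell)^\ell$ to beat $8d^{200}$. One minor remark: this lemma does not actually need $\kappa>1/2$; any fixed $\kappa\in(0,1)$ makes $(1-\kappa)\ell\to\infty$, so the polynomial bound is cleared with room to spare (the constraint $\kappa>1/2$ is used elsewhere in the paper, in Lemma~\ref{lem:n3}).
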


\begin{proof}
By Lemma \ref{ineq:chern}, we have 
$$
\Pro\left[S\ge\binom{d}{m}d^{-99}\right]  \leq \Pro\left[S-\Ex[S] \ge\frac 12 \binom{d}{m}d^{-99}\right] \le e^{-\frac 18 \binom{d}{m} d^{-198}}.
$$
But 
$\binom{d}{m} \ge \binom{d}{\ell} \ge \left(\frac{d}{\ell}\right)^{\ell}=d^{(1-\kappa)d^{\kappa}+O(1)}\ge 8 d^{200}$, for large $d$. 
\end{proof}

The proof of Lemma \ref{lem:n3}  considers the bipartite network of edges across layer $m$ as the network ${\mathcal{B}^*}$ formed by the superposition of two independent networks: ${\mathcal{B}^{\prime}}$ in which edges are present with probability $p^{\prime}$, with $p^{\prime}$ close to but less than $p$ and ${\mathcal{B}_{\delta}}$ in which edges are present with small probability $\delta$. The proof of Lemma \ref{lem:n3}  looks at the flow imbalances that occur at each vertex in the ${\mathcal{B}^{\prime}}$ network if a uniform flow reaches one vertex class $V_{m-1}$, a uniform flow leaves the other vertex class $V_{m}$, and the full capacity of every edge in ${\mathcal{B}^{\prime}}$ connecting the two classes is used. We show that at all except a small number of vertices these imbalances are small. We then use the  network $\mathcal{B}_{\delta}$ to smooth these imbalances so they are very small. We need the following technical lemma to quantify this `smoothing'.

\begin{lemma}\label{lem:smooth}
Fix $\delta, \lambda$ with $1/2 <\lambda<\kappa<1$  and $0<\delta<1$. Fix $m$ such that $d^{\kappa}<m<d/2$ and let $\psi_{max}=2d^{\lambda}|E_{m}|^{-1}$. Let $G_{\delta}$ be the bipartite graph with vertex classes $V_{m-1}$ and  $V_{m}$, in which edges of $Q^d$ are present independently with probability $\delta$; and let $\mathcal{B}_{\delta}$ be the network formed by giving each edge of $G_{\delta}$ capacity $d^{(\lambda-\kappa)/2}|E_m|^{-1} $. Let $V=V_m \cup V_{m-1}$. Let $\{\psi(x):x \in V\}$ be a family of random variables such that the following hold:
\begin{itemize}
	\item the random variables $\left(\psi(x)\right)_{x \in V}$  are independent of the edge-capacities,
	\item the random variables $\left(\psi(x)\right)_{x \in V_{m-1}}$  are independent,
	\item the random variables $\left(\psi(x)\right)_{x \in V_m}$ are independent, 
	\item for each $x \in V$, $|\psi(x)|\le\psi_{max}$,
	\item $\Ex[\psi(x)]=0$ for all $x$. 
\end{itemize}
Then, with failure probability $O(e^{-d^{2}})$, there is a function $\theta:V\rightarrow \mathbb{R}$ and a feasible flow in $\mathcal{B}_{\delta}$  such that the net inflow at each vertex is $\psi(x) +\theta(x)$ and $\sum_{x \in V}{|\theta(x)|}\le d^{-4}$. 
\end{lemma}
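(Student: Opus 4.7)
My approach is a max-flow / min-cut argument, after first absorbing low-degree vertices of $G_\delta$ into the error term $\theta$. Call $x\in V$ \emph{bad} if $\deg_{G_\delta}(x)<\tfrac12\delta\deg_B(x)$, where $\deg_B$ denotes degree in the full bipartite subgraph between $V_{m-1}$ and $V_m$; since $\deg_B(x)\ge m\ge d^\kappa$, Chernoff (Lemma~\ref{ineq:chern}) gives $\Pro[x\text{ bad}]\le e^{-\Omega(\delta d^\kappa)}\le d^{-100}$, and Lemma~\ref{lem:bad} then yields $|B|\le 2|V|/d^{99}$ with failure probability $O(e^{-d^2})$. Setting $\theta(x):=-\psi(x)$ for $x\in B$ contributes $|B|\psi_{max}=O(d^{\lambda-\kappa-99})\ll d^{-4}$ to the total error, which is utterly negligible.

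\textbf{Max-flow setup.} Let $V^*=V\setminus B$. I introduce a super-source $s$ and super-sink $t$: for each $x\in V^*$ with $\psi(x)>0$ add a directed edge $s\to x$ of capacity $\psi(x)$, and for each $x\in V^*$ with $\psi(x)<0$ add $x\to t$ of capacity $|\psi(x)|$; each edge of $G_\delta$ restricted to $V^*$ is oriented in both directions with capacity $c_\delta:=d^{(\lambda-\kappa)/2}|E_m|^{-1}$. A maximum $s$-$t$ flow of value $F$ restricts to a flow in $\mathcal{B}_\delta$ whose net inflow $\nu(x)$ satisfies $\mathrm{sign}(\nu(x))=\mathrm{sign}(\psi(x))$ and $|\nu(x)|\le|\psi(x)|$; setting $\theta(x):=\nu(x)-\psi(x)$ gives $\sum_{x\in V^*}|\theta(x)|=P^*+N^*-2F$, where $P^*,N^*$ are the positive and negative $\psi$-sums over $V^*$. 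Applying Lemma~\ref{ineq:chern} to $\Psi_i^*=\sum_{x\in V_i\cap V^*}\psi(x)$ and noting that $|V|\psi_{max}^2$ is doubly-exponentially small (since $m^2\binom{d}{m}\ge d^{2\kappa}\cdot d^{(1-\kappa)d^\kappa}$), I obtain $|P^*-N^*|\le d^{-5}$ with failure probability $O(e^{-d^2})$.

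\textbf{Cut inequality.} By max-flow min-cut, parametrising an arbitrary cut by $S=A\cap V^*$ and carrying out the standard manipulation, the required bound on $F$ reduces to
\[
\Bigl|\sum_{x\in S}\psi(x)\Bigr|\le c_\delta\cdot e_{G_\delta}(S,V^*\setminus S)+\tfrac18 d^{-4}\qquad\text{for every }S\subseteq V^*.
\]
For small $|S|\le\tfrac18\delta d^\kappa$: the minimum-degree property $\deg_{G_\delta}(x)\ge\tfrac12\delta d^\kappa$ on $V^*$, combined with the Chernoff-controlled bounds on $e_{G_\delta}(S,S)$ and the (very small) contribution $e_{G_\delta}(S,B)$, gives $e_{G_\delta}(S,V^*\setminus S)\ge\tfrac18\delta d^\kappa|S|$; then $c_\delta\cdot\tfrac18\delta d^\kappa|S|=(\delta/8)d^{(\kappa+\lambda)/2}|E_m|^{-1}|S|$ exceeds the trivial bound $|S|\psi_{max}\ge|X_S|$ by a factor $\Omega(\delta d^{(\kappa-\lambda)/2})\to\infty$, using $\kappa>\lambda$. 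For larger $|S|$, Hoeffding on $X_S$ (sharpened by the doubly-exponentially small $|V|\psi_{max}^2$) together with Chernoff concentration of $e_{G_\delta}(S,V^*\setminus S)$, and edge-expansion of the random bipartite graph $G_\delta[V^*]$, gives the inequality via a case analysis on $|S|$; summing failure probabilities over the size bands yields total failure $O(e^{-d^2})$.

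\textbf{Main obstacle.} The heart of the difficulty is verifying the cut inequality \emph{uniformly} over the $2^{|V^*|}$ subsets $S$ within the probability budget. A naive union bound is infeasible, so the argument must combine three ingredients: (i)~isolating the bad set $B$ so that $G_\delta[V^*]$ has minimum degree $\gtrsim\delta d^\kappa$; (ii)~exploiting the doubly-exponentially small value of $|V|\psi_{max}^2$ to obtain extraordinarily sharp Hoeffding concentration of $X_S$; and (iii)~invoking edge-expansion of the random bipartite graph $G_\delta[V^*]$ to narrow the family of $S$ for which the cut capacity might fail to dominate $|X_S|$. The hypotheses $\lambda<\kappa$ and $m>d^\kappa$ are essential throughout: together they guarantee that $c_\delta\cdot\delta d^\kappa\gg\psi_{max}$ at every vertex, so that the cut capacity incident to any $S$ comfortably absorbs its $\psi$-mass.
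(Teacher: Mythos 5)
Your proposal takes a genuinely different route from the paper. The paper constructs the required flow explicitly by \emph{diffusion}: each $\psi(x)$-imbalance is spread, via short random walks in $G_\delta$, over the vertices at $Q$-distance $40$ from $x$, and the resulting net inflow $F(z)$ at each $z$ is controlled by a single Chernoff bound over the $\sim|V|$ destinations. Your proposal instead tries to certify existence of an almost-balancing flow via max-flow/min-cut and an edge-expansion argument. These are fundamentally different strategies, and I think the min-cut route as sketched has two real gaps that the paper's construction deliberately avoids.

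First, your cut inequality
\[
\Bigl|\sum_{x\in S}\psi(x)\Bigr|\;\le\; c_\delta\cdot e_{G_\delta}(S,V^*\setminus S)\;+\;\tfrac18 d^{-4}
\]
must hold simultaneously for \emph{every} $S\subseteq V^*$. The Hoeffding bound for a fixed $S$ of size $k$ gives a tail $\exp(-\Theta(a^2/(k\psi_{max}^2)))$, and to survive the union bound over $\binom{|V^*|}{k}\approx e^{k\log(|V|/k)}$ sets together with the global $e^{-d^2}$ budget, one needs roughly $c_\delta\cdot e_{G_\delta}(S,V^*\setminus S)\gtrsim \psi_{max}\sqrt{k\bigl(k\log(|V|/k)+d^2\bigr)}$, i.e.\ an edge-expansion lower bound on $G_\delta[V^*]$ that is uniform over all cut sizes. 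For small $|S|$ your minimum-degree argument gives this deterministically (and this part is fine, using exactly the $\kappa>\lambda$ cushion you point out); but for sets of intermediate size you are implicitly assuming an edge-isoperimetric inequality for a \emph{sparse random subgraph of the structured bipartite graph between adjacent layers of $Q^d$}. That is a substantial piece of mathematics — one would have to combine something in the spirit of Harper/Kruskal--Katona for the deterministic layer graph with a concentration argument for the sparse random subgraph — and it is neither stated precisely nor proved; the phrase ``invoking edge-expansion\dots to narrow the family of $S$'' is exactly where the difficulty lives. The paper sidesteps all of this: its union bound is only over the polynomially-many destination vertices $z$, not over $2^{|V^*|}$ cut sets.

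Second, your definition of the bad set $B$ removes only the low-degree vertices of $G_\delta$ themselves, and then appeals to a minimum-degree property of $G_\delta[V^*]$. But a vertex $x\in V^*$ could a priori have most or all of its $G_\delta$-neighbours inside $B$, in which case $\deg_{G_\delta[V^*]}(x)$ may be far below $\tfrac12\delta m$. The bound $|B|\le 2|V|/d^{99}$ on its own does not rule out such clustering. This is precisely why the paper defines $T^2$ as all vertices within $Q$-distance $40$ of a low-degree vertex and shows $|T^2|\le|V|d^{-59}$: once $T^2$ is removed, \emph{all} vertices on the diffusion paths from a good $x$ have good degree. Your proposal needs an analogous ``second round'' of pruning (or a separate concentration argument that bad vertices don't cluster around any fixed vertex) before the min-degree and expansion claims on $G_\delta[V^*]$ can be justified. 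As written, both the expansion step and the min-degree-of-the-induced-subgraph step are genuine gaps, not just omitted routine calculations.
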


\begin{proof}
We denote by $T^s$ ($s$ for small degree) the set of vertices in $G_{\delta}$ with degree less than half their expected values ($\delta m/2$ or $\delta(d-m+1)/2$ for vertices in $V_m$ and $V_{m-1}$ respectively). We start by showing that there are very few vertices in $G_{\delta}$ for which there is a vertex in $T^s$ within a distance 40 in $Q$. Let
\begin{align*}
T^2_m&=\{u \in V_m:\exists v \in T^s \text{ with } d_Q(u,v) \le 40\},\\
T^2_{m-1}&=\{u \in V_{m-1}:\exists v \in T^s \text{ with } d_Q(u,v) \le 40\},\\
T^2 &= T^2_{m-1} \cup T^2_{m}.
\end{align*}
The events $d_{G_{\delta}}(v)<\delta m/2$ for $ v \in V_m$ are independent and, by Lemma \ref{ineq:chern}, $\Pro[d_{G_{\delta}}(v)<\delta m/2]\le e^{-\delta^2m^2/8d}\le e^{-\delta^2d^{2\kappa-1}/8}\le d^{-100}$ for large $d$. Then the distribution of the number of vertices with degree less than half their expected value is stochastically less than $\Bin(|V_m|,d^{-100})$, and so by Lemma \ref{lem:bad}, the number is less than $|V_m|d^{-99}$ with failure probability $O(e^{-d^2})$. Each of these vertices is within a distance 40 of at most $d^{40}$ other vertices so $\Pro[|T_m^2|\ge |V_m|d^{-59}]=O(e^{-d^2})$. A similar argument shows $\Pro[|T_{m-1}^2|\ge |V_{m-1}|d^{-59}]=O(e^{-d^2})$.

Let $\mathcal{B}$ be any bipartite network with vertex classes $V_{m-1}$ and $V_m$ and edge-capacities $d^{(\lambda-\kappa)/2}|E_m|^{-1}$ and also with the properties that $|T_m^2|\le |V_m|d^{-59}$ and $|T_{m-1}^2|\le |V_{m-1}|d^{-59}$. We condition on the network $\mathcal{B}_{\delta}$ being the network~$\mathcal{B}$. 

For each vertex $x\in V\setminus T^2  $ we use the open edges in $\mathcal{B}$ to carry a flow of volume $|\psi(x)|$  to or from (depending on whether $\psi(x)>0$ or $\psi(x)<0$) the vertices $y$ where $d_Q(y,x)=d_{\mathcal{B}}(y,x)=40$. If $\psi(x)>0$ the flows are constructed by pushing a flow of magnitude $\psi(x)$ equally to all the neighbours of $x$ in $\mathcal{B}$ and for each of those vertices then splitting the flow equally between its neighbours in $\mathcal{B}$ a distance one further away from $x$ in both $Q$ and $\mathcal{B}$ and so on. If $\psi(x)<0$ a similar flow of volume $-\psi(x)$ is constructed in the opposite direction. We shall see that, since $x \in V \setminus T^2$, there are always many neighbours one further away from $x$ in both $Q$ and $\mathcal{B}$.

We denote by $f_x(z)$ the amount of flow that reaches (or comes from) a destination vertex $z$ from a source (or sink) $x$. ($f_x(z)$ has the same sign as $\psi(x)$). We denote by $X(z)$ the set of vertices that might send a flow to a destination $z$ (or receive from $z$ if $\psi(x)<0$) and by $F(z)=\sum_{x \in X(z)}f_x(z)$, the resultant flow at $z$. 

Two vertices $x,z$ in $V_m$ (or equally $V_{m-1}$) are separated by a distance 40 (in $Q$) if 20 of the elements of (the set representing) $x$ are replaced by 20 elements of $[d] \setminus x$ to obtain (the set representing) $z$. For each $x \in V_m$ there are therefore $\binom{m}{20}\binom{d-m}{20}$ such vertices, and for each $x \in V_{m-1}$ there are $\binom{m-1}{20}\binom{d-m+1}{20}$ such vertices. 

The magnitude $|f_x(z)|$  depends on the size of the initial imbalance $|\psi(x)|$, the number of paths from $x$ to $z$ that exist in $\mathcal{B}$ and the `onward vertex degree' (i.e. the number of edges in $\mathcal{B}$ leaving a vertex on the path to a vertex further away in $Q$ and $\mathcal{B}$ from $x$) of each vertex on these paths. For a given $x$ and $z$ there are $(20!)^2$ possible paths from $x$ to $z$ corresponding to the different possible orders of removing and adding elements from $x$ to reach $z$. For a particular path $\mathcal{P}$ we define   $\nu(\mathcal{P})=1$ if all edges in $\mathcal{P}$ are open and $\nu(\mathcal{P})=0$ otherwise and we denote by $c(\mathcal{P})$ the product of the onward degrees along $\mathcal{P}$. Then  $f_x(z)$ can be expressed as
\[f_x(z)=\psi(x)\sum_{\mathcal{P}}  \frac{\nu(\mathcal{P})}{c(\mathcal{P})},\]
where the sum is taken over all $(20!)^2$ paths of length 40 from $x$ to $z$. We make two observations about this expression. Firstly, the sum $\sum_{\mathcal{P}}  \frac{\nu(\mathcal{P})}{c(\mathcal{P})}$ depends only on which edges in $\mathcal{B}$ are open and is fixed as we are conditioning on $\mathcal{B}$. For a given $z$ the variables $\{f_x(z):x \in V_m\}$ are therefore independent (though not, in general, with the same distribution). Secondly, each vertex in the path has degree at least half its expected degree (either $\delta m/2$ or $\delta (d-m+1)/2$). Therefore,  
\[
c(\mathcal{P})  \ge \prod_{i=1}^{20}\left(\frac{m\delta}{2}-i\right)\left(\frac{(d-m+1)\delta}{2}-i\right),\]
and so
\[ m^{20}(d-m+1)^{20}  \ge c(\mathcal{P}) \ge m^{20}(d-m+1)^{20}(\delta/3)^{40}.\]
Since $d/2 \le d-m+1 \le d$,
\[|f_x(z)|\le \frac{c\psi_{max}}{(dm)^{20}},\]
where  $c=(20!)^2(3/\delta)^{40}2^{20}$.

 The expressions within the sum $F(z)=\sum_{x \in X(z)}f_x(z)$ are independent, bounded and have zero expectation, so by the Chernoff bounds (Lemma \ref{ineq:chern}),
\[\Pro \Big[|F(z)|>a\frac{ c \psi_{max}}{(md)^{20}}\Big] < 2e^{-a^2/2|X(z)|}. \]
We note  $|X(z)| \le \binom{d}{20}^2 \le d^{40}/2$ and put $a=d^{20+4/3}$ to get $\frac{a^2}{2|X(z)|}\ge d^{8/3}$. Since $\psi_{max}=2d^{\lambda} |E_m|^{-1}$ and $(md)^{20}\ge d^{(1+\kappa)20}$, we get 
\[\Pro \Big[|F(z)|>(|E_m|d^5)^{-1}\Big] =O(e^{-2d^2}). \]
This result holds for each $z \in V$ and so
\[\Pro\Big[|F(z)|>(|E_m|d^5)^{-1}\text{ for some }z \in V\Big] =O(e^{-d^2}).\]

After this process we have the original imbalances $\psi(z)$ at vertices in $T^2$ which were not `smoothed', and the imbalances $F(z)$ arising from the smoothing process. We now introduce an equal and opposite flow $\theta(z)$ into every vertex in $\mathcal{B}$  to make the overall flow feasible. For $z \in T^2$, we set $\theta (z) = -\psi(z)-F(z)$ and for $z \in V \setminus  T^2$, we set $\theta (z) = -F(z)$. So

\[\sum_{z \in V} |\theta(z)| \le  \sum_{z \in V} |F(z)|+\sum_{z \in T^2}|\psi(z)|.\]

For $z \in T^2_{m}$, we have $|\psi(z)| \le \psi_{max}\le |V_m|^{-1}$ for large $d$; and similarly, for $z \in T^2_{m-1}$, we have $|\psi(z)| \le |V_{m-1}|^{-1}$. So then
\[\sum_{z \in T^2}|\psi(z)|\le |T^2_{m}||V_{m}|^{-1}+ |T^2_{m-1}||V_{m-1}|^{-1}\le 2d^{-59}.\]

We also have $\sum_{z \in V_m }|F(z)| \le |V_{m}||E_m|^{-1}d^{-5}\le d^{-5}$ with failure probability $O(e^{-d^2})$ and similarly $\sum_{z \in V_{m-1}}|F(z)| \le |V_{m-1}||E_m|^{-1}d^{-5}\le d^{-5}$ with failure probability $O(e^{-d^{2}})$. So with failure probability  $O(e^{-d^2})$,
\[\sum_{x \in V} |\theta(x)| \le d^{-4}.\]

We must check that these flows do not exceed the capacities of  $\mathcal{B}$. Consider edge $e=uv$ where $u \in V_{m-1}$ and $v \in V_{m}$. We denote by $F(e)=\sum_{\mathcal{P}}f_x(z)$ the total flow in edge $e$ of all the $f_x(z)$ flows for all possible $x$ and for all possible $z$ where $e$ is in a path in $Q[V]$ of length 40 from $x$ to $z$. The total possible number of paths of length 40 is $|V_{m}|\binom{m}{20}\binom{d-m}{20}+|V_{m-1}|\binom{m-1}{20}\binom{d-m+1}{20}$ and so, by the symmetry of the cube, each edge is in at most 40 times this number divided by the total number of edges (i.e. $|E_m|=|V_{m}|m=|V_{m-1}|(d-m+1)$). So each edge is in 
\[\frac{40}{m}\binom{m}{20}\binom{d-m}{20}+\frac{40}{d-m+1}\binom{m-1}{20}\binom{d-m+1}{20}=O\left(\frac{(md)^{20}}{m}\right)\]
paths. So $|F(e)|$ is at most this number of paths times the maximum value of any individual $|f_x(z)|$ i.e.
\[|F(e)| =O\left(\frac{(md)^{20}}{m} \frac{\psi_{max}}{(md)^{20}}\right)=O\left(\frac{\psi_{max}}{m}\right)=O\left(d^{\lambda-\kappa}|E_m|^{-1}\right).
\]
The capacity of the open edges is $d^{(\lambda-\kappa)/2}|E_m|^{-1}$ so the ratio of the flow calculated above to the edge-capacity is $O(d^{(\lambda-\kappa)/2})$. So, for large enough $d$,  the smoothing can be accomplished within $\mathcal{B}$.

We have thus shown that by conditioning on $\mathcal{B}_{\delta}$ being a specific network $\mathcal{B}$ with the properties given (essentially that $T^2$ is small) we achieve the required flows with failure probability $O(e^{-d^{2}})$ and we have also shown that $\mathcal{B}_{\delta}$ has these properties with failure probability $O(e^{-d^{2}})$. Thus we can achieve the flows we require in $\mathcal{B}_{\delta}$ with failure probability $O(e^{-d^{2}})$.
\end{proof}
We now prove the lemma that  gives us a near balanced flow across a single layer. We consider a bipartite network with vertex classes corresponding to $V_{m-1}$ and $V_m$ where each edge of $Q^d$ is present independently with probability $p$.  Edges are given capacity $(1+\epsilon)|E_m|^{-1}$ so that, with high probability, the total capacity is close to $(1+\epsilon)p$. We then introduce a balanced flow of volume $p$ into $V_{m-1}$ and extract a balanced flow of volume $p$ from $V_m$ and show that, with high probability, the resultant flow in the network is feasible except for very small imbalances.

\medskip
\begin{lemma}\label{lem:n3}
Fix $\epsilon>0$ and suppose that  $0<p<1$  and $\ell+1 \le m \le d-\ell$. Let  $G_m$ be the bipartite subgraph induced in $Q^d$ with vertex classes $V_{m-1}$ and $V_{m}$, and let $\mathcal{B}_m$ be the network formed by picking edges of $G_m$ independently with probability $p$ and giving them capacity $(1+\epsilon)|E_m|^{-1}=(1+\epsilon)\left(m\binom{d}{m}\right)^{-1}$. Then,  with failure probability $O(e^{-d^2})$, there exists a  $(d^{-3}/9)$-near-balanced feasible flow of volume $p$ from  $V_{m-1}$ to $V_{m}$ in $\mathcal{B}_m$. 
\end{lemma}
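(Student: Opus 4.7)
The plan is to decompose $\mathcal{B}_m$ into two independent random subnetworks: one that carries the bulk of the flow uniformly, and one that smooths the resulting imbalances through Lemma~\ref{lem:smooth}. Fix $\lambda \in (1/2, \kappa)$, set $p' = p/(1+\epsilon/2)$, and choose $\delta \in (0,1)$ so that $(1-p')(1-\delta) = 1-p$. Since the edges of $\mathcal{B}_m$ appear independently with probability $p$, I may realise it as the edge-wise union of two independent random bipartite subgraphs $\mathcal{B}'$ (edge probability $p'$) and $\mathcal{B}_\delta$ (edge probability $\delta$). Give every edge of $\mathcal{B}'$ capacity $(1+\epsilon/2)|E_m|^{-1}$ and every edge of $\mathcal{B}_\delta$ capacity $d^{(\lambda-\kappa)/2}|E_m|^{-1}$; for large $d$ the sum of these is still below the available $(1+\epsilon)|E_m|^{-1}$, which is what guarantees feasibility at the end.

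Next, push flow $(1+\epsilon/2)|E_m|^{-1}$ along every edge of $\mathcal{B}'$, directed from $V_{m-1}$ to $V_m$; this is feasible in $\mathcal{B}'$. Let $\psi^0(x)$ denote the deviation of the resulting net outflow (in $V_{m-1}$) or inflow (in $V_m$) at $x$ from the balanced target $p/|V_{m-1}|$ or $p/|V_m|$, with signs chosen so that $\Ex[\psi^0(x)] = 0$; the choice $p' = p/(1+\epsilon/2)$ arranges precisely this. Because $\psi^0(x)$ is a scaled deviation of a binomial count from its mean, Chernoff (Lemma~\ref{ineq:chern}) gives $\Pro[|\psi^0(x)| > \psi_{\max}/2] \le d^{-100}$ with $\psi_{\max} = 2d^{\lambda}|E_m|^{-1}$ for large $d$, crucially using $2\lambda - 1 > 0$, i.e.\ $\lambda > 1/2$---this is where the hypothesis $\kappa > 1/2$ bites. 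Lemma~\ref{lem:bad} then bounds the set of ``bad'' vertices (where this threshold is exceeded) by $|V_m| d^{-99}$ in each vertex class, except on an event of probability $O(e^{-d^2})$.

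To satisfy the hypotheses of Lemma~\ref{lem:smooth} I pass from $\psi^0$ to the truncated, re-centred version
\[
\tilde\psi(x) := \psi^0(x)\,\mathbf{1}[|\psi^0(x)| \le \psi_{\max}/2] - \Ex\!\left[\psi^0(x)\,\mathbf{1}[|\psi^0(x)| \le \psi_{\max}/2]\right],
\]
which is bounded by $\psi_{\max}$ (the recentring correction is negligible by Cauchy--Schwarz against the Chernoff tail), has mean zero, and inherits the required independence structure from the bipartite nature of $\mathcal{B}'$ and the independence of $\mathcal{B}'$ from $\mathcal{B}_\delta$. Lemma~\ref{lem:smooth} then supplies, on $\mathcal{B}_\delta$, a feasible flow whose net inflow at each $x$ equals $\tilde\psi(x) + \theta(x)$ with $\sum_x|\theta(x)| \le d^{-4}$, with failure probability $O(e^{-d^2})$. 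Superimposing on the primary flow gives a feasible flow in $\mathcal{B}_m$ of volume $p$ whose imbalance at $x$ is $\psi^0(x) - \tilde\psi(x) - \theta(x)$: at good vertices the first two terms cancel up to the tiny recentring correction; at bad vertices the crude bound $|\psi^0(x) - \tilde\psi(x)| = O(|V_{m-1}|^{-1}) + O(|V_m|^{-1})$ sums to $O(d^{-99})$ across both classes, so the total near-balance error sits comfortably below $p\,d^{-3}/9$ for large $d$. The main technical obstacle is this truncation-and-centring step: the natural imbalance $\psi^0$ is unbounded, so one must check that replacing it by $\tilde\psi$ perturbs the final flow only by an amount that can be absorbed by the near-balance slack and by the tiny total contribution from bad vertices.
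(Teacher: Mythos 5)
Your proposal is correct and follows the same overall strategy as the paper's proof: realise $\mathcal{B}_m$ as the edge-wise union of an independent pair $\mathcal{B}'$ (edge probability $p'$, carrying the bulk flow at full capacity) and $\mathcal{B}_\delta$ (edge probability $\delta$, used for smoothing via Lemma~\ref{lem:smooth}), with $(1-p')(1-\delta)=1-p$ and $\lambda\in(1/2,\kappa)$. The only genuine difference is in how you prepare the imbalance function for Lemma~\ref{lem:smooth}. The paper first conditions on the ``bad'' sets $T'_{m-1},T'_{m}$ of vertices whose $\mathcal{B}'$-degree deviates by more than $d^{\lambda}$, sets $\psi\equiv 0$ there, and re-centres the remaining $\rho(x)$ by subtracting the \emph{empirical} mean $|S_m|^{-1}\sum_{v\in S_m}\rho(v)$. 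You instead truncate $\psi^0$ at $\psi_{\max}/2$ and re-centre by the \emph{theoretical} mean, giving $\tilde\psi(x)=\psi^0(x)\mathbf{1}[|\psi^0(x)|\le\psi_{\max}/2]-\Ex[\cdot]$, and handle the (few, Lemma~\ref{lem:bad}-controlled) bad vertices by a crude $O(|V_m|^{-1})$ bound on the residual $|\psi^0-\tilde\psi|$. Your variant is arguably cleaner: since $\tilde\psi(x)$ depends only on $d'(x)$, the family $(\tilde\psi(x))_{x\in V_m}$ is genuinely independent, whereas the paper's $\psi(x)$ for $x\in S_m$ all share the empirical-mean term and so are not literally independent as Lemma~\ref{lem:smooth} requires --- your truncation avoids this. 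Two small presentational points: the sign bookkeeping around ``net inflow $\tilde\psi+\theta$'' versus ``imbalance $\psi^0-\tilde\psi-\theta$'' should be made consistent (apply Lemma~\ref{lem:smooth} to $-\tilde\psi$, say), and you should note, as the paper implicitly does, that the constructed flow has volume $p+O(d^{-4})$ rather than exactly $p$; this slack is absorbed either by a trivial rescaling or by the later application of Lemma~\ref{lem:stitching} in Lemma~\ref{lem:main2p}. Neither affects correctness.
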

\begin{proof}
Choose $p'$ with $\max\left\{\frac p{1+p},\frac p{1+\epsilon}\right\}\le p'<p$ and define $\delta$ by $(1-p)=(1-p')(1-\delta)$. Choose $\lambda$ with $1/2<\lambda<\kappa$ . We then define the networks $\mathcal{B}^{\prime}$, $\mathcal{B}_{\delta}$ and $\mathcal{B}^*$ as follows. $\mathcal{B}^{\prime}$ is formed by picking edges of $G_m$ independently with probability $p^{\prime}$ and giving them capacity $\frac{p}{p^{\prime}}|E_m|^{-1}$. $\mathcal{B}_{\delta}$ is formed by picking edges of $G_m$ independently with probability $\delta$ and giving them capacity $d^{(\lambda-\kappa)/2}|E_m|^{-1}$. Next, we let $\mathcal{B}^*$ be the network formed by superposing $\mathcal{B}^{\prime}$ and $\mathcal{B}_{\delta}$. Finally we let $\mathcal{B}$ be the network formed by giving an edge-capacity $(1+\epsilon)|E_m|^{-1}$ if the corresponding edge in $\mathcal{B}^*$ has non-zero capacity and capacity zero otherwise. We note that $\mathcal{B}$ has the same distribution as $\mathcal{B}_m$ and the capacity of any edge in $\mathcal{B}$ is at least the capacity of the corresponding edge in $\mathcal{B}^*$. So to show that flows exist in $\mathcal{B}_m$ we can analyse flows in $\mathcal{B}^*$. 

We define random sets
\begin{align*}
&T^{\prime}_m=\{u \in V_m:d'(u)<p'm-d^{\lambda}  \text{ or } d'(v)>p'm+d^{\lambda}\},\\
&S^{\prime}_m=V_m\setminus T^1_m,
\end{align*}
where $d'(v)$ is the degree of $v$ in the $p'$ network $\mathcal{B}^{\prime}$. We similarly define $T^{\prime}_{m-1}$ and $S^{\prime}_{m-1}$  for vertices in layer $V_{m-1}$. 

We now condition on the values of the random sets $T^{\prime}_{m-1}$ and $T^{\prime}_m$. So let us pick subsets $T_{m-1} \subset V_{m-1}$ with $|T_{m-1}|\le|V_{m-1}|d^{-99}$, and $T_{m} \subset V_{m}$ with $|T_{m}|\le|V_{m}|d^{-99}$ and condition on the events $T^{\prime}_{m-1}=T_{m-1}$  and $T^{\prime}_{m}=T_{m}$. 

We introduce a balanced inflow to $V_{m-1}$ of volume $p$ and demand a balanced outflow from $V_m$ of volume $p$, and now define a flow  from $V_{m-1}$ to $V_{m}$ as follows.
\begin{enumerate}
	\item Use the full capacity of every edge in $\mathcal{B}^{\prime}$ to route flow from $V_{m-1}$ to $V_{m}$.
	\item For $u \in V_m$ the imbalance $\rho(u)$ is given by
	\[
	\rho(u)=\frac{p}{p'} d'(u)|E_{m}|^{-1}-p|V_m|^{-1},
	\]
	and we make two observations about $\rho(u)$. Firstly,
	\[
	\frac{p}{p^{\prime}}mp^{\prime}|E_m|^{-1}=pm|E_m|^{-1}=p|V_m|^{-1},~ \text{so} ~\Ex[\rho(u)]=0.
	\]
	Secondly $0 \le d'(u) \le m$,~ so 
	\[
	-p|V_m|^{-1}\le\rho(x)\le \frac{p}{p^{\prime}}m|E_m|^{-1}-p|V_m|^{-1}=\left(\frac{p}{p^{\prime}}-p\right)|V_m|^{-1},
	\] 
	Since $p/p^{\prime}\le 1+p$ we have $|\rho(x)| \le |V_m|^{-1}$. 
	
	For $u \in V_{m-1}$ the imbalance $\rho(u)$ is given by
	\[
	\rho(u)=-\frac{p}{p'} d'(u)|E_{m-1}|^{-1}+p|V_{m-1}|^{-1},
	\]
	and we make the same two observations as above. Namely, $\Ex[\rho(u)]=0$ and $|\rho(u)| \le |V_{m-1}|^{-1}$.

			\item Let $S_m =V_m \setminus T_m$ and $S_{m-1} =V_{m-1} \setminus T_{m-1}$. We define $\psi(x)$ as follows: 
			\begin{equation}
			\psi(x)=\begin{cases} \rho(x)-|S_m|^{-1}\sum_{v \in S_m}\rho(v)&\mbox{if } x \in S_m\\
			                      0                                        &\mbox{if } x \in T_m\\
			                      \rho(x)-|S_{m-1}|^{-1}\sum_{v \in S_{m-1}}\rho(v)&\mbox{if } x \in S_{m-1}\\
			                      0                                        &\mbox{if } x \in T_{m-1}.
			                      \end{cases}
			                      \end{equation}
			For each vertex $x$, flows of volume $\psi(x)$ are now put into $\mathcal{B}_{\delta}$ and `smoothed' using Lemma \ref{lem:smooth}. The quantities $\psi(x)$ and the network $\mathcal{B}_{\delta}$ satisfy the conditions of Lemma \ref{lem:smooth} so the smoothing process takes the input $\psi(x)$ and results in output $\theta(x)$, and the resulting imbalance after smoothing at each vertex is $\phi(x)=\rho(x)-\psi(x)+\theta(x)$.
			\end{enumerate}
So

\begin{align}\label{eqn:sumphi}
\sum_{x \in V} |\phi(x)| &\leq \sum_{x \in V} |\rho(x)-\psi(x)| + \sum_{x \in V} |\theta(x)|,\nonumber\\
&\leq \sum_{x \in T_m} |\rho(x)-\psi(x)| +\sum_{x  \in T_{m-1}} |\rho(x)-\psi(x)|\nonumber\\
&+\sum_{x \in S_m } |\rho(x)-\psi(x)|+\sum_{x \in S_{m-1}} |\rho(x)-\psi(x)|+ \sum_{x \in V} |\theta(x)|.
\end{align}
We are able to achieve a small total for this expression because $|\rho(x)-\psi(x)|$ is small on $T_m \cup T_{m-1}$ which is a small set and because $|\rho(x)-\psi(x)|$ is very small on $S_m \cup S_{m-1}$. Formally for $x \in T_m, \psi(x)=0$ and $|\rho(x)|\le|V_{m}|^{-1}$; and for $x \in T_{m-1}, \psi(x)=0$ and $|\rho(x)|\le|V_{m-1}|^{-1}$. For $x\in S_{m}, \rho(x)-\psi(x)=|S_{m}|^{-1}\sum_{v \in S_{m}}\rho(v)$; and for $x \in S_{m-1}, \rho(x)-\psi(x)=|S_{m-1}|^{-1}\sum_{v \in S_{m-1}}\rho(v)$.

So by equation(\ref{eqn:sumphi})
\begin{align}
\sum_{x \in V} |\phi(x)| &\leq \frac{|T_m|}{|V_m|} +\frac{|T_{m-1}|}{|V_{m-1}|}
&+\left|\sum_{v \in S_m}\rho(v)\right|+\left|\sum_{v \in S_{m-1}}\rho(v)\right|+ \sum_{x \in V} |\theta(x)|.\label{eq:smoothterms}
\end{align}
We now look at each of these terms in turn.
By our conditioning \newline
$|T_m||V_m|^{-1}<d^{-99}$ and $|T_{m-1}||V_{m-1}|^{-1}<d^{-99}$, and
\begin{align*}
\left|\sum_{v \in S_m}\rho(v)\right|&=\left|\sum_{v \in V_m}\rho(v)-\sum_{v \in T_m}\rho(v)\right|,\\
&\le\left|\sum_{v \in V_m}\rho(v)\right|+ |T_m||V_m|^{-1},\\
&\le \left|\sum_{v \in V_m}\rho(v)\right|+d^{-99}.
\end{align*}
From Lemma \ref{ineq:chern} we have
\[\Pro\left[\left|\sum_{v \in V_m}\rho(v)\right|>d^{-99}p\right]\le2 e^{-d^{-198}p^2\binom{d}{m}/2}=O(e^{-d^2}),\]and
\[\Pro\left[\left|\sum_{v \in V_{m-1}}\rho(v)\right|>d^{-99}p\right]\le2 e^{-d^{-198}p^2\binom{d}{m-1}/2}=O(e^{-d^2}).\]
From Lemma \ref{lem:smooth} we get
\[\Pro\left[\sum_{v \in V_m}\left|\theta(v)\right|>d^{-4}\right]=O\left(e^{-d^2}\right).\]
Putting these expressions back into equation (\ref{eq:smoothterms}), subject to the initial conditioning, with failure probability $O(e^{-d^2})$, we get that $\sum_{x \in V} |\phi(x)|<2d^{-4}$, which is at most $d^{-3}p/7$, for large $d$.

It now remains to show that, with high probability, our network $\mathcal{B}$  satisfies $|T^1_m| \le |V_m|d^{-99}$ and $|T^1_{m-1}| \le |V_{m-1}|d^{-99}$. 

For $x \in V_m$, $d^{\prime}(x)$ is a random variable with distribution $\Bin(m,p^{\prime})$ so by taking Chernoff bounds (Lemma \ref{ineq:chern}) $\Pro[x \in T^1_m]<e^{-d^{2\lambda}/2m}=e^{-\Omega(d^{2\lambda-1})}$. The events $x \in T^1_m$ for $x \in V_m$ are independent so we can use Lemma \ref{lem:bad} to show $\Pro[|T^1_m| \ge |V_m|d^{-99}]=O(e^{-d^2})$. So with failure probability $O(e^{-d^2})$, $|T^1_m||V_m|^{-1}\le d^{-99}$. Similarly, with failure probability $O(e^{-d^2})$, we have \newline  \mbox{$|T^1_{m-1}||V_{m-1}|^{-1}\le d^{-99}$.} This proves the lemma.

We note that we require $\lambda>1/2$ and hence $\kappa>1/2$ for this proof to work. In essence the proof would not work if the disparity between the sizes of the two vertex layers was too large.
\end{proof}

Next we are going to prove a general result about flows in directed networks. We will use the definitions of a proper $ST$ flow $f$ and its volume $\vol(f)$ from Section \ref{3sec:definitions}.

\begin{lemma}\label{lem:stitching}
Let $\mathcal{N}=(V,\overrightarrow{E})$ be a directed network and let $S$ and $T$ be disjoint non-empty subsets of $V$. Let $0<\theta<1/9$ and let $f:\overrightarrow{E}\rightarrow [0,\infty)$ be a possibly improper flow with the following properties:
\begin{align*}
&\sum_{v \in V \setminus (S \cup T)}|f^+(v)|\le \theta\, \size(f),\\
&\sum_{v \in S}\left|f^+(v)-\size(f)/|S|\right|+\sum_{v \in T}\left|f^-(v)-\size(f)/|T|\right|\le \theta \,\size(f),
\end{align*}
where 
$\size(f)=1/2\sum_{v \in V}|f^+(v)|.$
Then there is a proper $ST$ flow $g$ 
such that the following hold:
\begin{enumerate}
	\item For all $\overrightarrow{e} \in \overrightarrow{E}$, $0\le g(\overrightarrow{e}) \le f(\overrightarrow{e})$.
	\item For all $s \in S$, $g^+(s)\ge 0$ and for all  $t \in T$, $g^-(t) \ge 0$, and $\vol(g)=\sum_{s \in S}g^+(s)=\sum_{t \in T} g^-(t)\ge (1-2\theta)\,\size(f).$
	\item $\sum_{v \in S}\left|g^+(v)-\vol(g)/|S|\right|+\sum_{v \in T}\left|g^-(v)-\vol(g)/|T|\right|\le 9\,\theta\, \vol(g).$
\end{enumerate}
\end{lemma}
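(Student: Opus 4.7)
The plan is to apply standard flow-decomposition to $f$. Since $f \ge 0$ on arcs, we may write $f = \sum_p \phi_p + \sum_c \psi_c$, where each $\phi_p$ is a non-negative flow along a directed simple path from a vertex with $f^+ > 0$ to a vertex with $f^+ < 0$, each $\psi_c$ is a non-negative cycle flow, and the total weight of the $\phi_p$'s equals $\size(f)$ (see \cite[p.~80]{AhujaMagnantiOrlin}). For $X \in \{S, T, V \setminus (S \cup T)\}$ let $X^+ = \{v \in X : f^+(v) > 0\}$ and $X^- = \{v \in X : f^+(v) < 0\}$, and write $P, N$ for the external source/sink sets. Define $g$ to be the sum of exactly those $\phi_p$ starting in $S^+$ and ending in $T^-$, discarding everything else. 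Then $0 \le g(\vec e) \le f(\vec e)$ arc-wise, $g$ is a proper $ST$-flow (its net flow vanishes outside $S^+ \cup T^-$), and $\vol(g)$ equals the retained path-weight.

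For the volume bound, set $\alpha = \sum_{s \in S}|f^+(s) - \size(f)/|S||$, $\beta = \sum_{t \in T}|f^-(t) - \size(f)/|T||$ and $\gamma = \sum_{v \notin S \cup T}|f^+(v)|$, so the hypotheses read $\alpha + \beta \le \theta\,\size(f)$ and $\gamma \le \theta\,\size(f)$. Applying the triangle inequality to $\sum_{s \in S}(f^+(s) - \size(f)/|S|)$ gives $\sum_{s \in S} f^+(s) \ge \size(f) - \alpha$, and hence $\sum_{s \in S^+} f^+(s) \ge \size(f) - \alpha$ (as $\sum_{s \in S^-}|f^+(s)| \ge 0$). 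So the total weight of $\phi_p$'s starting in $S^+$ is at least $\size(f) - \alpha$; symmetrically, those ending in $T^-$ have total weight at least $\size(f) - \beta$. Since the overall path weight is $\size(f)$, inclusion--exclusion yields $\vol(g) \ge \size(f) - (\alpha + \beta) \ge (1 - \theta)\,\size(f)$, well inside the required $(1 - 2\theta)\,\size(f)$.

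For the near-balance estimate, the triangle inequality gives
\[ |g^+(s) - \vol(g)/|S|| \;\le\; |g^+(s) - f^+(s)| + |f^+(s) - \size(f)/|S|| + (\size(f) - \vol(g))/|S| \]
for each $s \in S$. Summing over $s$, the middle sum contributes $\alpha$ and the last contributes $\size(f) - \vol(g)$. For the first sum, note $g^+(s) = 0$ unless $s \in S^+$, so it splits as the ``missed'' volume $\sum_{s \in S^+}(f^+(s) - g^+(s))$, equal to the weight of paths from $S^+$ not ending in $T^-$ and so bounded by the total arrival mass $\gamma + \alpha$ at $N \cup S^-$, plus the $S^-$-part $\sum_{s \in S^-}|f^+(s)| \le \alpha$. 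Combining this with the symmetric bound at $T$, the total in the two sums is at most $2\gamma + 3(\alpha + \beta) + 2(\size(f) - \vol(g)) \le 7\theta\,\size(f)$, which the lower bound $\vol(g) \ge (1 - 2\theta)\,\size(f)$ and the hypothesis $\theta < 1/9$ upgrade to $\le 9\theta\,\vol(g)$.

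The main obstacle is keeping the bookkeeping straight: vertices of $S$ with $f^+ \le 0$ behave as sinks rather than sources in the decomposition (and similarly for $T^+$), so the three budgets $\alpha, \beta, \gamma$ must be apportioned carefully among all six source/sink classes in order to pin down the specific constants $2$ and $9$ in the stated bounds, and to avoid double-counting when combining triangle inequalities with inclusion--exclusion.
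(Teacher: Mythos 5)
Your proof is correct and follows essentially the same route as the paper: both decompose the (possibly improper) flow into weighted paths and cycles and then retain exactly the paths running from $S^+$ to $T^-$ (the paper packages this via an auxiliary super-source/super-sink, but the selection rule is identical). One small dividend of your version is the inclusion--exclusion step, which gives $\vol(g)\ge(1-\theta)\,\size(f)$ rather than the paper's weaker $(1-2\theta)\,\size(f)$, though the final $9\theta$ constant is unchanged.
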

\begin{proof}
We define
\begin{align*}
V^+&=\{v \in V \setminus (S \cup T):f^+(v) >0\},\\
V^-&=\{v \in V \setminus (S \cup T):f^+(v) <0\},\\
S^+=&\{s \in S:f^+(s) > 0\}\text{ and }S^-=\{s \in S:f^+(s) < 0\},\\
T^+=&\{t \in T:f^+(t) > 0\}\text{ and }T^-=\{t \in T:f^+(t) < 0\}.
\end{align*}
And we introduce a `super-source' vertex $x$, and edges $\overrightarrow{E}^+=\{\overrightarrow{xv}: v \in V^+ \}$, $\overrightarrow{E}^+_S=\{\overrightarrow{xs}: s \in S^+ \}$,$\overrightarrow{E}^+_T=\{\overrightarrow{xt}: t \in T^+ \}$  and a `super-sink' vertex $y$ and edges $\overrightarrow{E}^-=\{\overrightarrow{vy}: v \in V^- \}$, $\overrightarrow{E}^-_S=\{\overrightarrow{sy}: s \in S^- \}$ and $\overrightarrow{E}^-_T=\{\overrightarrow{ty}: t \in T^- \}$. (see Figure \ref{fig14}). We now define $f^{\prime}$ by
\[
f^{\prime}(\overrightarrow{e})=
\begin{cases}
f(\overrightarrow{e}) & \text{if } \overrightarrow{e} \in \overrightarrow{E},\\
f^{+}(v) & \text{for } \overrightarrow{e}=xv \in \overrightarrow{E}^+ \cup \overrightarrow{E}^+_S \cup \overrightarrow{E}^+_T,\\
f^{-}(v) & \text{for } \overrightarrow{e}=vy \in \overrightarrow{E}^- \cup \overrightarrow{E}^-_S \cup \overrightarrow{E}^-_T.
\end{cases}
\]
The new flow $f^{\prime}$ is a proper $xy$ flow on the enlarged network. Also $\vol(f^{\prime})=\sum\{f^+(v):v \in V \text{ with } f^+(v)>0\}=\size(f)$. We can decompose this flow into flows along $xy$ paths and around cycles (see for example Ahuja, Magnanti and Orlin \cite[page 80]{AhujaMagnantiOrlin}). We then define a new proper $xy$ flow by deleting all the $xy$ flows along paths in this decomposition that use edges in $\overrightarrow{E}^+ \cup \overrightarrow{E}^+_T \cup\overrightarrow{E}^- \cup\overrightarrow{E}^-_S $. The sum of the volumes of the deleted $xy$ flows is at most 
\[\sum_{v \in V\setminus (S \cup T)} |f^+(v)|+\sum_{v \in S^-}|f^-(v)|+\sum_{v \in T^+}|f^+(v)|.\]
Now these last two terms sum to less than $\sum_{v \in S}|f^+(v)-\size(f)/|S||+\sum_{v \in T}|f^-(v)\minus\size(f)/|T||$ which is less than $\theta\,\size(f)$.

So the total volume of the deleted flows is at most $2\theta\,\size(f)$ and the resultant flow is a proper $xy$ flow with volume at least $(1-2\theta) \,\size(f)$ in which every constituent flow starts $xs$ and ends $ty$ for some vertices $s \in S, t \in T$. We now define our proper $ST$ flow $g$ by restricting this flow to the edges in $\mathcal{N}$, i.e. the $st$ portions of these paths.

By construction $0\le g(\overrightarrow{e}) \le f(\overrightarrow{e})$ for all $\overrightarrow{e} \in \overrightarrow{E}$ and $g^+(s)\ge 0$ for all $s \in S$ and $g^-(t)\ge 0$ for all $t \in T$ and we have just shown $\vol(g)=\sum_{s \in S}g^+(s)=\sum_{t \in T}g^-(t)\ge (1-2\,\theta)\,\size(f)$. Lastly,
\begin{align*}
\sum_{v \in S}\left|g^+(v)-\vol(g)/|S|\right| &\le \sum_{v \in S}\left|g^+(v)-f^+(v)\right|+\sum_{v \in S}\left|f^+(v)-\size(f)/|S|\right|\\
&+\sum_{v \in S}\left|\size(f)/|S|-\vol(g)/|S|\right|.
\end{align*}
And the last term equals $|\size(f)-\vol(g)|\le 2\theta\,\size(f)$. So
\begin{align*}
\sum_{v \in S}\left|g^+(v)-\vol(g)/|S|\right|&+\sum_{v \in T}\left|g^-(v)-\vol(g)/|T|\right| \le\sum_{v \in S\cup T} |f^+(v)-g^+(v)|\\
&+\sum_{v \in S}\left|f^+(v)-\size(f)/|S|\right|+\sum_{v \in T}\left|f^-(v)-\size(f)/|T|\right|\\
&+|S|^{-1}\sum_{v \in S}\left|\size(f)-\vol(g)\right|+|T|^{-1}\sum_{v \in T}\left|\size(f)-\vol(g)\right|,\\
&\le 7 \theta \,\size(f) \le 9\theta\,\vol(g).
\end{align*}
\end{proof}

\floatstyle{plain}
\restylefloat{figure}
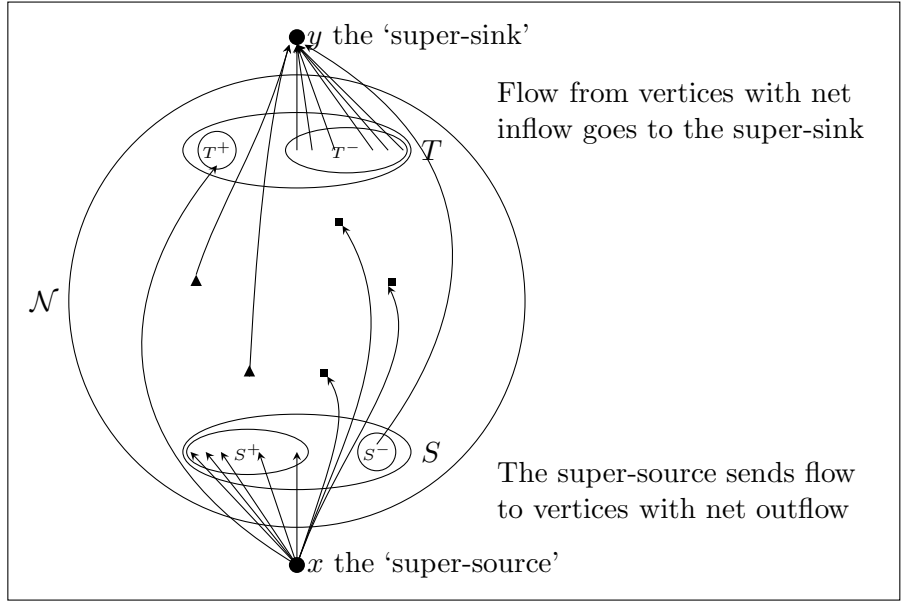
\begin{figure}[h]
	\centering
	\small
\begin{tikzpicture}[show background rectangle, >=stealth]
\draw (0,3) circle (3);
\draw (-3,3) node[left]{$\mathcal{N}$};
\draw (0,1) ellipse (1.5cm and 0.5cm);
\draw (-0.65,1) ellipse (0.8cm and 0.3cm);
\draw (-0.65,1) node {\tiny$S^+$\normalsize};
\draw (1.05,1) circle (0.25cm);
\draw[->] (1.05,1.1) .. controls (2.5,3) and (2.5,5).. (0.1,6.4);
\draw (1.05,1) node {\tiny$S^-$\normalsize};
\draw (0,5) ellipse (1.5cm and 0.5cm);
\draw (0.65,5) ellipse (0.8cm and 0.3cm);
\draw (0.65,5) node {\tiny$T^-$\normalsize};
\draw[->] (0,-0.5) .. controls (-2.5,1) and (-2.5,3).. (-1.05,4.8);
\draw (-1.05,5) circle (0.25cm);
\draw (-1.05,5) node {\tiny$T^+$\normalsize};
\draw (1.5,1) node[right]{$S$};
\draw (1.5,5) node[right]{$T$};
\fill[black] (0,-0.5) circle(3pt) node[right]{$x$ the `super-source'};
\fill[black] (0,6.5) circle(3pt) node[right]{$y$ the `super-sink'};
\draw [->](0,-0.5) -- (-1.4,1);
\draw [->](0,-0.5) -- (-1.2,1);
\draw [->](0,-0.5) -- (-1.0,1);
\draw [->](0,-0.5) -- (-0.5,1);
\draw [->](0,-0.5) -- (0,1);
\draw [->](0,5) -- (0,6.4);
\draw [->](0.2,5) -- (0,6.4);
\draw [->](0.5,5) -- (0,6.4);
\draw [->](1,5) -- (0,6.4);
\draw [->](1.2,5) -- (0,6.4);
\draw [->](1.4,5) -- (0,6.4);
\fill[black] (0.3,2) rectangle(0.4,2.1);
\fill[black] (0.5,4.0) rectangle(0.6,4.1);
\fill[black] (1.3,3.2) rectangle(1.2,3.3);
\fill[black] (-0.7,2) -- (-0.625,2.15) -- (-0.55,2) -- cycle;
\fill[black] (-1.4,3.2) -- (-1.325,3.35) -- (-1.25,3.2) -- cycle;
\draw[->] (0,-0.5) .. controls  (0.5,1) and (0.7,1.5).. (0.4,2);
\draw[->] (0,-0.5) .. controls  (0.5,1) and (1.5,2.5).. (0.6,4.0);
\draw[->] (0,-0.5) .. controls  (0.5,1) and (1.7,2.2).. (1.2,3.2);
\draw[->] (-0.625,2) .. controls (-0.6,2.35) and (-0.5,5).. (-0.1,6.4);
\draw[->] (-1.325,3.35) .. controls (-1,4.35) and (-0.5,5).. (-0.1,6.4);
\draw (2.5,5.5) node[right, text width=5cm]
{Flow from vertices with net inflow goes to the super-sink };
\draw (2.5,0.5) node[right, text width=5cm]
{The super-source sends flow to vertices with net outflow };
\end{tikzpicture}
\normalsize
\caption{\mdseries Flows added from a super-source and to a super-sink to create a proper flow. \normalsize}
	\label{fig14}
\end{figure}
\floatstyle{boxed}
\restylefloat{figure}	
\vspace{7pt}

\begin{lemma}\label{lem:main2p}
Fix $0<p<1$, and fix $\epsilon>0$. For $\mathcal{N} \in \mathcal{G}(Q^d,p)$, with failure probability $e^{-\Omega(d^2)}$ there exists a $ d^{-2}$-near-balanced flow of volume at least $(1+\epsilon)^{-1}p$ from  $V_{\ell}(\mathbf{0})$ to $V_{\ell}(\mathbf{1})$ in the network $\mathcal{N}({\mathbf{0}})$. 
\end{lemma}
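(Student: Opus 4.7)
The plan is to apply Lemma~\ref{lem:n3} to each edge-layer between $V_{\ell}(\mathbf{0})$ and $V_{d-\ell}(\mathbf{0}) = V_{\ell}(\mathbf{1})$, superpose the resulting near-balanced layer-flows, and then convert the (possibly improper) superposition into a proper near-balanced $V_\ell$-to-$V_{d-\ell}$ flow via Lemma~\ref{lem:stitching}. Fix $\epsilon' \in (0,\epsilon)$, so that $(1+\epsilon')^{-1}$ strictly exceeds $(1+\epsilon)^{-1}$. For each $m$ with $\ell+1 \le m \le d-\ell$, Lemma~\ref{lem:n3} (applied with parameter $\epsilon'$) produces, with failure probability $O(e^{-d^2})$, a $(d^{-3}/9)$-near-balanced feasible flow of volume $p$ from $V_{m-1}$ to $V_m$ in the bipartite network whose open edges have capacity $(1+\epsilon')|E_m|^{-1}$. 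Since open edges in layer $E_m$ of $\mathcal{N}(\mathbf{0})$ have capacity exactly $|E_m|^{-1}$, rescaling by $(1+\epsilon')^{-1}$ yields a flow $f_m$ of volume $\phi^* := (1+\epsilon')^{-1}p$ that is feasible in the $E_m$-restriction of $\mathcal{N}(\mathbf{0})$. A union bound over the fewer than $d$ layers gives total failure probability $e^{-\Omega(d^2)}$.

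Let $f := \sum_m f_m$; this is a feasible (possibly improper) flow in $\mathcal{N}(\mathbf{0})$, since the $f_m$ have pairwise disjoint edge-supports. For $v \in V_\ell$ we have $f^+(v) = f_{\ell+1}^+(v)$, symmetrically $f^-(v) = f_{d-\ell}^-(v)$ for $v \in V_{d-\ell}$, and for internal $v \in V_m$ with $\ell+1 \le m \le d-\ell-1$ we have $f^+(v) = f_{m+1}^+(v) - f_m^-(v)$. Because each $f_m$ is $(d^{-3}/9)$-near-balanced with target $\phi^*/|V_{m-1}|$ on $V_{m-1}$ and $\phi^*/|V_m|$ on $V_m$, summing the $V_m$-halves of the near-balance sums across $m$ gives
\[
\sum_{v \notin V_\ell \cup V_{d-\ell}} |f^+(v)| \,\le\, 2(d-2\ell-1)\cdot \tfrac{1}{9}d^{-3}\phi^* \,=\, O(d^{-2}\phi^*),
\]
while the boundary imbalance sums on $V_\ell$ and $V_{d-\ell}$ are each at most $(d^{-3}/9)\phi^* = O(d^{-3}\phi^*)$. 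In particular $\size(f) = \phi^*(1 + O(d^{-2}))$.

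We now apply Lemma~\ref{lem:stitching} with $S = V_\ell$, $T = V_{d-\ell}$ and some $\theta = O(d^{-2})$ (the replacement of the target $\phi^*/|V_\ell|$ by $\size(f)/|S|$ in the boundary sum contributes only a further $|\size(f) - \phi^*| = O(d^{-2}\phi^*)$). The resulting proper $ST$-flow $g$ satisfies $0 \le g \le f$, hence is feasible in $\mathcal{N}(\mathbf{0})$; its volume $\vol(g) \ge (1-2\theta)\size(f) \ge \phi^*(1 - O(d^{-2}))$ exceeds $(1+\epsilon)^{-1}p$ for large $d$, by our strict choice $\epsilon' < \epsilon$; and its total boundary imbalance is at most $9\theta\vol(g) = O(d^{-2})\vol(g)$, yielding the required $d^{-2}$-near-balance for large $d$ (the constants here can be tightened by exploiting the fact that the proof of Lemma~\ref{lem:n3} in fact establishes the sharper bound $\sum_x|\phi(x)| \le 2d^{-4}$ rather than the quoted $d^{-3}/9$ on imbalance). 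The main obstacle is the careful layer-by-layer imbalance bookkeeping at the superposition step; the strict slack $\epsilon' < \epsilon$ is needed precisely to absorb the $(1-O(d^{-2}))$ loss incurred by the stitching.
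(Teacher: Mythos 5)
Your proof is correct and follows essentially the same route as the paper's: apply Lemma~\ref{lem:n3} layer by layer, superpose the layer-flows (which have disjoint edge-supports), bound the resulting imbalances, and convert to a proper near-balanced flow via Lemma~\ref{lem:stitching}. Your introduction of the slack $\epsilon' < \epsilon$ to absorb both the $(1+\epsilon')^{-1}$ rescaling and the $(1-O(d^{-2}))$ stitching loss is a sensible tightening that the paper's terse proof glosses over, and your caveat about loosening the imbalance constant from $d^{-3}/9$ to the proof-level bound $2d^{-4}$ correctly flags the only place where the bookkeeping needs care.
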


\begin{proof}
The proof follows by applying Lemma \ref{lem:n3} to each layer from layer $\ell+1$ to layer $d-\ell$. With failure probability $O(e^{-d^2})$ there is in each layer a $(d^{-3}/9)$-near-balanced flow of volume $p$ that satisfies the capacity constraints. By taking the union bound we see that with failure probability $e^{-\Omega(d^2)}$, there is an improper flow $F$ of volume $p$ from $V_{\ell}$ to $V_{d-\ell}$ with $\sum|f^+(v)| \le d^{-2}p/9$ where the sum is taken over all vertices in layers $m$ to $ d-\ell-1$, and which satisfies the capacity constraints of $\mathcal{N}$. By applying Lemma \ref{lem:stitching} there is a feasible flow of volume $(1-d^{-2})p$ from $V_{\ell}$ to $V_{d-\ell}$, which is $d^{-2}$-near-balanced.
\end{proof}

The main result of this section is the following lemma which is the generalization of Lemma \ref{lem:main2p} to general edge-capacities.

\begin{lemma}\label{lem:main2}
Let the non-negative random variable $C$ have finite mean. Fix $\epsilon>0$. For $\mathcal{N} \in \mathcal{G}(Q^d,C)$, with failure probability $e^{-\Omega(d^2)}$, the network $\mathcal{N}({\mathbf{0}})$ has a 
$d^{-2}$-near-balanced flow of volume at least $(1-\epsilon)\Ex[C]$ from  $V_{\ell}(\mathbf{0})$ to $V_{\ell}(\mathbf{1})$.
\end{lemma}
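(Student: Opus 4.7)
The plan is to reduce the general-$C$ case to the Bernoulli case already handled in Lemma~\ref{lem:main2p} by approximating $C$ from below by a finite sum of scaled Bernoulli indicators, and then superposing the flows produced at each level. Crucially, although the indicators $\mathbf{1}[C_e \ge t]$ for different thresholds $t$ but the same edge $e$ are not independent, Lemma~\ref{lem:main2p} is applied separately at each level, where only independence across edges is needed.

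I would begin by fixing parameters. Given $\epsilon>0$, pick $K$ so large that $\Ex[\min(C,K)]\ge(1-\epsilon/4)\Ex[C]$, then pick finitely many continuity points $0=t_0<t_1<\cdots<t_N=K$ of the distribution of $C$, closely enough spaced that the discretized variable
\[
C' \;:=\; \sum_{i=1}^N (t_i - t_{i-1})\,\mathbf{1}[C \ge t_i]
\]
satisfies $C'\le C$ always and $\Ex[C']\ge(1-\epsilon/3)\Ex[C]$; such a choice exists because $\Ex[\min(C,K)]=\int_0^K \Pro[C>t]\,dt$ is the limit of lower Riemann sums. Set $p_i:=\Pro[C\ge t_i]$; the integer $N$ and the probabilities $p_i$ depend on $\epsilon$ but not on $d$.

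Next, for each level $i$ consider the auxiliary Bernoulli network $\mathcal{N}_i$ whose edge-capacities are the indicators $\mathbf{1}[C_e\ge t_i]$; this is a member of $\mathcal{G}(Q^d,p_i)$. Applied to $\mathcal{N}_i$ with tolerance $\epsilon/3$, Lemma~\ref{lem:main2p} yields, with failure probability $e^{-\Omega(d^2)}$, a $d^{-2}$-near-balanced flow $F^{(i)}$ of volume at least $(1+\epsilon/3)^{-1}p_i$ from $V_\ell(\mathbf{0})$ to $V_\ell(\mathbf{1})$ in $\mathcal{N}_i(\mathbf{0})$. A union bound over the constantly many levels $i=1,\dots,N$ makes all $N$ of these flows coexist with failure probability still $e^{-\Omega(d^2)}$.

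Finally I would stitch the flows together in $\mathcal{N}(\mathbf{0})$ by setting $F:=\sum_{i=1}^N (t_i-t_{i-1})\,F^{(i)}$. The total volume is
\[
\vol(F)\;\ge\;(1+\epsilon/3)^{-1}\sum_{i=1}^N (t_i-t_{i-1})p_i\;=\;(1+\epsilon/3)^{-1}\Ex[C']\;\ge\;(1-\epsilon)\Ex[C].
\]
Feasibility is immediate: on each edge $e$ of layer $E_m$ the flow $F^{(i)}$ uses at most the capacity $\mathbf{1}[C_e\ge t_i]/|E_m|$, so the combined flow uses at most $\bigl(\sum_i (t_i-t_{i-1})\mathbf{1}[C_e\ge t_i]\bigr)/|E_m|=C'_e/|E_m|\le C_e/|E_m|$, which is the capacity of $e$ in $\mathcal{N}(\mathbf{0})$. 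Near-balancedness follows from the triangle inequality: the imbalance of $F$ at $v$ is bounded by the sum of scaled imbalances of the $F^{(i)}$, giving total imbalance $O(d^{-2})\vol(F)$. The main technical nuisance will be this final $O(1)$ constant: to land exactly at $d^{-2}$ one either re-runs Lemma~\ref{lem:main2p} at each level with parameter $d^{-2}/(2N)$ in place of $d^{-2}$ (the proofs of Lemmas~\ref{lem:n3} and \ref{lem:stitching} tolerate this, since $N$ is an absolute constant), or one simply accepts a harmless constant factor, since the precise polynomial rate plays no role downstream.
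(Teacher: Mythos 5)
Your proposal is correct and follows essentially the same route as the paper: both reduce to the Bernoulli case of Lemma~\ref{lem:main2p} by writing a lower-bounding discretization of $C$ as a positive combination of Bernoulli indicators on the same probability space (the paper discretizes first and then uses equality indicators $\mathbf{1}[C=a_i]$; you use a one-step layer-cake decomposition with threshold indicators $\mathbf{1}[C\ge t_i]$), then apply the Bernoulli lemma per level, union-bound, and superimpose. One small remark: your worry about an $O(1)$ loss in the near-balancedness constant is unnecessary --- since $\vol(F)=\sum_i \alpha_i\vol(F^{(i)})$, the weighted sum of $d^{-2}$-near-balanced flows is exactly $d^{-2}$-near-balanced by the triangle inequality, with no extra factor, which is also how the paper handles it.
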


\begin{proof}
Lemma \ref{lem:main2p} gave the result we desire in a network $\mathcal{N} \in \mathcal{G}(Q^d,p)$. We now look at  $\mathcal{N} \in \mathcal{G}(Q^d,C)$ for the general distribution $C$. We say a random variable $C$ and its distribution are \emph{good} if, with failure probability $e^{-\Omega(d^2)}$, there exists a $d^{-2}$-near-balanced  flow of volume at least $(1-\epsilon)\Ex[C]$ from  $V_{\ell}({\mathbf{0}})$ to $V_{\ell}({{\mathbf{1}}})$ in network $\mathcal{N}(\bf{0})$ where the independent edge-capacities have distribution $C$. So we wish to prove that $C$ is good for general distribution $C$ with $\Ex[C]<\infty$.

Firstly consider a distribution of the form 
\begin{equation}\label{eq:multi}
\Pro[C=a_i]=p_i,\; 0\le i \le n,\;\sum_{i=0}^n p_i=1, \; 0=a_0<a_1<a_2\dots<a_n, 
\end{equation}
where $p_i>0 \text{ for }i>0$. 
Now define $C_i=\mathbf{1}_{C=a_i}$ and note that $C=\sum_{i=0}^n a_iC_i$ and $C_i \sim \Ber(p_i)$.
Now consider a network $\mathcal{N}'$ generated by superimposing networks formed with edge-capacities $a_iC_i$ and note that $\mathcal{N}'$ is a valid generator of $\mathcal{N}$. Now a scaled distribution of a good distribution is good and a finite sum of (not necessarily independent) good distributions is also good. To see this, as $\epsilon >0$, we know, with failure probability $e^{-\Omega(d^2)}$, there exists a $d^{-2}$-near-balanced flow of volume at least $(1-\epsilon)\Ex[a_iC_i]$ in the network formed with edge-capacities $a_iC_i$. By taking the union bound we know that the sum of these flows gives a flow of volume at least $(1-\epsilon) \Ex[\sum_i a_iC_i]$ with failure probability $e^{-\Omega(d^2)}$. The volume of the sum of these flows equals the sum of the volumes of the individual flows so the resultant flow is also $d^{-2}$-near-balanced.

Hence  all $C$ of the  form (\ref{eq:multi}) are good.
For general distribution $C$ with $\Ex[C]<\infty$ and $\epsilon>0$ we can truncate and approximate $C$ by a random variable $C^{(\epsilon)}$ of the form (\ref{eq:multi}) such that $\Ex[C^{(\epsilon)}]>\Ex[C]-\epsilon$ and $C \ge C^{(\epsilon)}$.\end{proof}

%

\section{Proofs of Lemma \ref{lem:0} and Theorem 1}\label{sec:th1proof}
\subsection{Proof of Lemma \ref{lem:0}}

In Lemma \ref{lem:main1} we showed that given $\epsilon>0$ there exists a constant $M$ such that, with failure probability $O(2^{-\rho d})$ we can route a balanced flow of volume $\Ex[C]$ in the network $\mathcal{N}^M(u)$ between $u$ and layer $V_{\ell}(u)$ for each vertex $u$ in $Q^d$. Let us condition on this happening and consider vertex $\bf{0}$. In Lemma \ref{lem:main2} we showed that with failure probability $e^{-\Omega(d^2)}$ we could route a $d^{-2}$-near-balanced flow of volume at least $(1+\epsilon)^{-1}\Ex[C]$ from $V_{\ell}(\bf{0})$ to $V_{d-{\ell}}(\bf{0})$. By Lemma \ref{lem:stitching} these can be combined to give a flow from $\mathbf{0}$ to $\mathbf{1}$ of volume at least $(1+\epsilon)^{-1}\Ex[C]-d^{-2}$. The edges sets used by the three sets of flows defined above only overlap in $E_{\ell+1},\;E_{\ell+2},\;E_{d-\ell-1},\;E_{d-\ell}$  and the sum of the capacities used is less than the capacity in $\mathcal{N}^{M+1}(\mathbf{0})$. Thus for large enough $d$ and by a suitable redefinition of $\epsilon$ we see that subject to our conditioning with failure probability $e^{-\Omega(d^2)}$ we can route a balanced flow of volume $(1-\epsilon)\Ex[C]$ in the network $\mathcal{N}^{M+1}(\mathbf{0})$ between $\mathbf{0}$ and  $\mathbf{1}$. We can remove the condition to prove the lemma. \qed


\subsection{Proof of  Theorem 1}\label{sec:proof1}
The upper bound (\ref{ub}) has already been proved in Section \ref{sub:upper} so it remains to prove the lower bound (\ref{lb}).
Lemma \ref{lem:0} showed that given $\epsilon>0$ there exists a constant $M$ such that, with failure probability $O(2^{-\rho d})$, for each antipodal vertex pair $u,\overline{u}$, there exists a flow of volume  $(1-\epsilon)\Ex[C]$ between  $u$ and $\overline{u}$ in the network $\mathcal{N}^M(u)$. We now consider the flows of all antipodal vertex pairs simultaneously in the network $\mathcal{N}_{sum}$ which is formed by superposing the networks $\mathcal{N}^M(u)$ for all vertex pairs. The lower bound in Theorem \ref{th:1} is proved by showing in Lemma \ref{lem:qsumcap} that the capacity demanded of an edge $e$ in $\mathcal{N}_{sum}$ is less than $(1+o(1))c_e$ and then rescaling.

The capacity $\capt(e)$ of edge $e$ required in $\cN_{sum}$ is the sum of the capacity required in that edge for all antipodal vertex pairs. Then
\[\capt(e)=\frac12 \sum_{v \in V} \mathbf{1}_{e \in E_m(v)} \frac{c_eM(m)}{|E_m|},\]
where $M(m)=M$ for $1\le m\le\ell+2$ and for $d-\ell-1\le m\le d$ and $M(m)=1$ otherwise, and $\mathbf{1}_{e \in E_m(v)}$ is an indicator function that takes the value 1 when $v$ is a vertex for which the edge $e$ is in edge-layer $m$. For ease of calculation we write this out as follows.
\begin{align*}
\capt(e)=\sum_{m=1}^d \sum_{v:e \in E_m(v)} \!\frac{c_e}{2|E_m|}&+\sum_{m=1}^{\ell+2} \sum_{v:e \in E_m(v)} \! \frac{c_e (M\minus 1)}{2|E_m|}\\
&+\sum_{m=d-\ell-1}^d \sum_{v:e \in E_m(v)} \! \frac{c_e (M\minus 1)}{2|E_m|}.
\end{align*}

\begin{lemma}\label{lem:qsumcap}
Fix an edge $e$ in $Q$ and let $\epsilon_d=\frac{2(M-1)(\ell+2) }{d}$, and $V=V(Q)$.  Then 
\begin{enumerate}
	\item $\sum_{m=1}^d \sum_{v:v \in E_m(v)}  \frac{c_e}{|E_m|}=2c_e,$
	\item $\sum_{m=1}^d \sum_{v:v \in E_m(v)}  \frac{c_e(M-1)}{|E_m|}=2\epsilon_dc_e,$
	\item $\capt(e)=(1+\epsilon_d)c_e.$
\end{enumerate}
\end{lemma}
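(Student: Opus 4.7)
The plan is to reduce all three parts to a single counting identity and then read everything off mechanically. Specifically, the first step is to establish that for each $1 \le m \le d$, the number of vertices $v \in V$ with $e \in E_m(v)$ equals exactly $2\binom{d-1}{m-1}$; this is the only nontrivial input to the lemma.

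To prove the count, write $e = xy$ with $y = x \oplus \{i\}$ for some coordinate $i \in [d]$. A vertex $v$ satisfies $e \in E_m(v)$ precisely when $\{d(v,x), d(v,y)\} = \{m-1, m\}$. Parametrising $v = x \oplus s$ with $s \subseteq [d]$, we have $d(v,x) = |s|$ and $d(v,y) = |s \triangle \{i\}|$, so the two distances differ by $\pm 1$ according to whether $i \in s$. The case $d(v,x) = m-1$, $d(v,y) = m$ forces $i \notin s$ and $|s| = m-1$, giving $\binom{d-1}{m-1}$ vertices; the swapped case $d(v,x) = m$, $d(v,y) = m-1$ contributes another $\binom{d-1}{m-1}$ by the same argument with $x$ and $y$ interchanged.

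Once that count is in hand, using the standard identity $|E_m| = m\binom{d}{m} = d\binom{d-1}{m-1}$, each inner sum collapses to the $m$-independent value
\[
\sum_{v:\, e \in E_m(v)} \frac{c_e}{|E_m|} \;=\; \frac{2\binom{d-1}{m-1}\, c_e}{d\binom{d-1}{m-1}} \;=\; \frac{2c_e}{d}.
\]
Part (1) then follows by summing over all $d$ layers, giving $2c_e$. Part (2) follows by summing the corresponding $(M-1)$-scaled quantity over the $2(\ell+2)$ extreme layers, which yields $\frac{4(M-1)(\ell+2)}{d}c_e = 2\epsilon_d c_e$ with $\epsilon_d = 2(M-1)(\ell+2)/d$ as defined. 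Part (3) is then immediate from combining (1) and (2) via the displayed formula for $\capt(e)$, in which each of the three inner sums carries the prefactor $\tfrac12$: one gets $\capt(e) = \tfrac12 \cdot 2c_e + \tfrac12 \cdot 2\epsilon_d c_e = (1+\epsilon_d)c_e$.

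The whole argument is entirely elementary and I foresee no real obstacle. If anything demands care, it is the counting step, where one must notice that both orientations of ``which endpoint of $e$ is closer to $v$'' contribute equally and therefore double the naive count — but this symmetry is manifest by interchanging $x$ and $y$.
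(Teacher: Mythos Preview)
Your proof is correct and follows essentially the same route as the paper: both reduce everything to showing that the inner sum equals $2c_e/d$ by counting the vertices $v$ with $e\in E_m(v)$, and then summing over the relevant layers. The only difference is in how that count is obtained: the paper uses edge-transitivity of $Q^d$ (each vertex contributes $|E_m|$ edges in layer $m$, so by symmetry each edge lies in layer $m$ for exactly $|V(Q)|\,|E_m|/|E(Q)|=2|E_m|/d$ vertices), whereas you compute $2\binom{d-1}{m-1}$ directly from the distance condition --- and these agree since $|E_m|=d\binom{d-1}{m-1}$.
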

\begin{proof}
A vertex $v$ has $|E_m|$ edges in edge-layer $E_m(v)$ so in the cube there are $|V(Q)||E_m|$ vertex-edge pairs where the edge is in layer $m$ of the vertex. Therefore, by the symmetry of the cube, each edge has $|E_m|\frac{|V(Q)|}{|E(Q)|}=2|E_m|/d$ vertices for which it is in edge-layer $m$. So
	\[\sum_{m =1}^d\sum_{v:e \in E_m(v)}c_e|E_m|^{-1}=\sum_{m=1}^d\frac{2c_e|E_m|}{d}|E_m|^{-1}=2c_e.\] 
Similarly, 
	\begin{align*}
	&\sum_{m=1}^{\ell+2}\sum_{v:e \in E_m(v)}c_e(M-1)|E_m|^{-1}+\sum_{\stackrel{m=}{d-\ell-1}}^{d}\sum_{v:e \in E_m(v)}c_e(M-1)|E_m|^{-1}\\
	=&(M-1)\sum_{m=1}^{\ell+2}c_e \frac{2|E_m|}{d}|E_m|^{-1}+(M-1)\sum_{\stackrel{m=}{d-\ell-1}}^{d}c_e \frac{2|E_m|}{d}|E_m|^{-1}\\    
	=&~4\frac{(M-1)(\ell+2)}{d}c_e =2\epsilon_d c_e .
	\end{align*}
The third assertion follows by adding together the capacities from the two parts above.

\end{proof}
%

\section{Proof of Theorem 2}\label{sec:th2proof}
\subsection{Overview}

In Theorem \ref{th:2} we consider flows between all vertex pairs  in the network $\mathcal{N} \in \mathcal{G}(Q^d,C)$. We introduce the following terminology. For $u,v \in V(Q^d)$ we denote by $Q(u,v)$  the smallest cube containing both $u$ and $v$. If $d_Q(u,v)=k$ then $Q(u,v)$ has $2^k$ vertices. In set notation the vertices of $Q(u,v)$ are those subsets of $u \cup v$ which contain $u \cap v$. For $u,v \in V(Q^d)$ with $d(u,v)=k$ we denote by $\mathcal{N}^M(u,v)$ the subcube $Q(u,v)$ with edge-capacities defined in a manner analogous to equation (\ref{eq:capacity}). For edge $e$ in edge-layer $m$ in $Q(u,v)$, we let $c_e=2^{1-d}\binom{k}{m}^{-1}c_em^{-1}$ for $\ell+3\le m \le k-\ell-2$ and $c_e=M2^{1-d}\binom{k}{m}^{-1}c_em^{-1}$ for $m \le \ell+2$ or $m \ge k-\ell-1$. We let $S_u(v)=Q(u,v)\cap V_{\ell}(u). \text{ See figure }\ref{fig:x7}$. We consider vertex pairs with a small separation separately in Section \ref{sec:short}. So let $\mathcal{V}^{\text{near}}$ denote the set of unordered pairs of vertices $u,v$ such that $d(u,v)\le d/4$ and let $\mathcal{V}^{\text{far}}=\mathcal{V} \setminus \mathcal{V}^{\text{near}}$. 
\floatstyle{plain}
\restylefloat{figure}

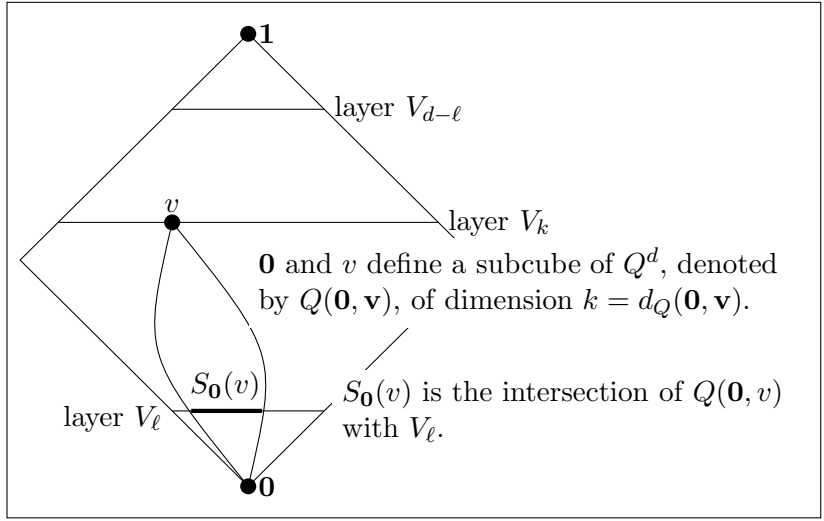
\begin{figure}[h]
	\centering
	\small
\begin{tikzpicture}[show background rectangle, >=stealth]
\fill[black] (0,0) circle(3pt) node[right]{$\bf{0}$};
\fill[black] (0,6) circle(3pt) node[right]{$\bf{1}$};
\fill[black] (-1,3.5) circle(3pt) node[above]{$v$};
\draw (0,0) -- (-3,3) -- (0,6) -- (3,3) -- (0,0);
\draw (-1,1) -- (1,1);
\draw (-1,0.9) node[left]{layer $V_{\ell}$ };
\draw (-1,5) -- (1,5)node[right,text width=4cm]{layer $V_{d-\ell}$ };
\draw (-2.5,3.5) -- (2.5,3.5)node[right]{layer $V_{k}$ };
\draw (0,0) .. controls (-1.4,1.75) and (-1.4,1.75).. (-1.0,3.5);
\draw (0,0) .. controls (0.4,1.75) and (0.4,1.75).. (-1.0,3.5);
\draw (0,2.7) node[right, fill=white, text width=7cm]
{$\bf{0}$ and $v$ define a subcube of $Q^d$, denoted by $Q(\bf{0},v)$, of dimension $k=d_Q(\bf{0},v)$.};
\draw (1.1,1.0) node[right, fill=white, text width=6cm]
{$S_{\mathbf{0}}(v)$ is the intersection of $Q(\mathbf{0},v)$ with $V_{\ell}$. };
\draw[ultra thick] (-0.75,1) -- (0.18,1);
\draw (-0.3,1.0) node[above]
{$S_{\mathbf{0}}(v)$ };
\end{tikzpicture}
\normalsize
\caption{\mdseries The $k-$cube defined by $\bf{0}$ and $v$ and the set $S_{\mathbf{0}}(v)$ \normalsize}	
	\label{fig:x7}
\end{figure}	
\floatstyle{boxed}
\restylefloat{figure}
The proof of Theorem \ref{th:2} uses much of the work from the first proof but we must be careful about how we show that we can escape from every vertex. For example if $d$ is even and we want to route a flow from $u$ to $v$ with $d_Q(u,v)=d/2$ we cannot just apply theorem \ref{th:1} to $Q(u,v)$ as the failure probability in Lemma \ref{lem:main1} is too great. This is not just a technical problem: if the open degree of $u$ is less than $d/2$ then there is some vertex $v$ with $d_Q(u,v)=d/2$  such that $u$ is an isolated vertex in $Q(u,v)$. However, by Lemma \ref{lem:main1} we know that for some constant $M$, with high probability, we can route a balanced flow to $V_{\ell}$ in $\mathcal{N}^M(u)$. Lemma \ref{lem:decomp} is a deterministic result that uses the symmetry of the cube to divide up this flow and allocate the parts to the different flows from $u$ to  all the sets $S_u(v)$ in a balanced manner to achieve the flows we need.

In Lemma \ref{lem:SuvSvu} we show that with high probability we can route $(1+\epsilon)^{-1}d^{-2}$-near-balanced flows of volume $\Ex[C]2^{1-d}$ between $S_u(v)$ and $S_v(u)$ for all $\{u,v\} \in \mathcal{V}^{\text{far}}$. We further show that the capacity required in an edge $e$ to achieve all these flows simultaneously is less than $c_e$. We then `stitch' these flows together using Lemma \ref{lem:stitching} in the proof of Lemma \ref{lem2:far}, which covers flows between all vertex pairs in $\mathcal{V}^{\text{far}}$.

In Lemma \ref{lem:short} we show that for all pairs $\{u,v\} \in \mathcal{V}^{\text{near}}$ we can route simultaneous flows via distant vertices and that the total capacity required for edge $e$ in this network is less than $\epsilon c_e$.

The theorem follows from these lemmas.

\subsection{Flows close to a source}
In Lemma \ref{lem:main1} we showed that there exist constants $\rho>0$ and $M>0$ such that the following holds. With failure probability $2^{-\rho d}$:- for all vertices $u$ there exists a balanced flow $f_u$ of volume $\Ex[C]$ in the network $B_{u}^M({\ell}+2)$ from $u$ to $V_{\ell}(u)$. We further showed in Lemma \ref{lem:qsumcap} that for each edge $e$ of $Q$ the sum of the absolute values of the flows along $e$ from the $f_u$ is at most $\epsilon_d c_e$ where $\epsilon_d=\frac{2(\ell+2)M}{d}=O(d^{\kappa-1})$

We now want to prove the deterministic result that if $\mathcal{N}^*$ is a network (on $Q^d$) with the above property then for every vertex $u$ we can decompose the flow from $u$ to $V_{\ell}(u)$ into balanced flows of volume $2^{-d}$ from $u$ to $S_u(v)$ for all $v \in V_k$ and all $d/4 < k \le d$, and that this can be accomplished using only a small ($\epsilon_d$) proportion of the capacity of any edge. (Note that for presentation purposes we decompose flows of volume 1 from $u$ to $V_{\ell}(u)$ implying a different choice of the constant $M$).

\begin{lemma}
\label{lem:decomp}
Let $f$ be a balanced flow of volume 1 from $\mathbf{0}$ to $V_{\ell}$ in $Q^d$. Then $f$ may be decomposed as $g + \sum_v f^v$ where the sum is over $v\in Q^d$ with $d(\mathbf{0},v)>d/4$ and where $f_v$ is a balanced $\mathbf{0}$-$S_{\mathbf{0}}(v)$ flow of volume $2^{-d}$ and there is no cancellation in the sum.

\end{lemma}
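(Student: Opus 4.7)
The plan is to perform a path decomposition of $f$, regroup the path-flows by their endpoint in $V_{\ell}$, and then allocate a carefully chosen fraction of each piece to the target sets $S_{\mathbf{0}}(v)$. The whole argument is deterministic and essentially a binomial identity calculation.

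First, by the standard flow decomposition theorem (see Ahuja, Magnanti and Orlin \cite[page 80]{AhujaMagnantiOrlin}), we may write $f$ as a non-negative combination of directed path-flows from $\mathbf{0}$ to vertices of $V_{\ell}$, together perhaps with some cycle flows. Group path-flows by their endpoint: for each $w\in V_{\ell}$, let $f_w$ be the sum of path-flows ending at $w$. Since $f$ is balanced, $\vol(f_w)=1/\binom{d}{\ell}$, and each $f_w$ uses any given directed edge only if $f$ does and then in the same direction.

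Next, for each $v$ with $|v|=k>d/4$, define
$$r_k=\frac{2^{-d}\binom{d}{\ell}}{\binom{k}{\ell}},\qquad f^v=\sum_{w\in S_{\mathbf{0}}(v)} r_k\, f_w.$$
Since $|S_{\mathbf{0}}(v)|=\binom{k}{\ell}$ and each $f_w$ is a balanced $\mathbf{0}$-$w$ flow of volume $1/\binom{d}{\ell}$, the net inflow of $f^v$ at each sink $w\in S_{\mathbf{0}}(v)$ equals $r_k/\binom{d}{\ell}=2^{-d}/\binom{k}{\ell}$, and the total volume is $2^{-d}$. Thus $f^v$ is a balanced $\mathbf{0}$-$S_{\mathbf{0}}(v)$ flow of the required volume, using only directed edges in the directions used by $f$.

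Finally, set $g=f-\sum_v f^v$. Since for fixed $w$ there are exactly $\binom{d-\ell}{k-\ell}$ sets $v\supseteq w$ with $|v|=k$, we get for each directed edge $e$
$$\sum_v f^v(e)=\sum_w f_w(e)\sum_{k>d/4}\binom{d-\ell}{k-\ell}r_k=\left(2^{-d}\sum_{k>d/4}\binom{d}{k}\right)\sum_w f_w(e),$$
using the identity $\binom{d-\ell}{k-\ell}\binom{d}{\ell}=\binom{d}{k}\binom{k}{\ell}$. The bracketed factor is at most $1$, and $\sum_w f_w(e)\le f(e)$ since the $f_w$ are the path-flow components of $f$, so $\sum_v f^v(e)\le f(e)$ and $g(e)\ge 0$. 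Every $f^v$ and $g$ uses each edge only in the direction used by $f$, so the decomposition has no cancellation.

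The main (minor) obstacle is guessing the correct ratios $r_k$; these are forced by the requirement that each $f^v$ be balanced with volume $2^{-d}$, after which both the volume/balance check and the non-negativity of $g$ reduce to the single binomial identity above.
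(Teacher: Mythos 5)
Your proof is correct and takes essentially the same approach as the paper's: both decompose $f$ into path-flows grouped by endpoint $w\in V_{\ell}$, allocate the fraction $2^{-d}\binom{d}{\ell}/\binom{k}{\ell}$ of $f_w$ to each $v\supseteq w$ with $|v|=k$, and verify feasibility via the identity $\binom{d-\ell}{k-\ell}\binom{d}{\ell}=\binom{d}{k}\binom{k}{\ell}$ and the bound $2^{-d}\sum_{k>d/4}\binom{d}{k}\le 1$. Your version is, if anything, a little more explicit in defining $g$ and checking edge-wise non-negativity and absence of cancellation.
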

\begin{proof}
Let $\mathbf{0} \in V(Q^d)$, and let $f^{\mathbf{0}}$ be a balanced flow of volume 1 in $B^{*M}_{\ell+2}(\mathbf{0})$ from $\mathbf{0}$ to $V_{\ell}(\mathbf{0})=V_{\ell}$. We first show that there are flows $f^{\mathbf{0}v}$ for each vertex $v$ such that $\{\mathbf{0},v\} \in \mathcal{V}^{\text{far}}$ such that   i) $f^{\mathbf{0}v}$ is a balanced flow of volume $2^{-d}$ from $\mathbf{0}$ to $S_{\mathbf{0}}(v)$, and that ii) $\sum_{v:\{u,v\} \in \mathcal{V}^{\text{far}}}|f^{\mathbf{0}v}| \le f^{\mathbf{0}}(e)$ for each edge $e \in E(Q^d)$. 
	
The flow $f^{\mathbf{0}}$ can be decomposed into flows along paths and around cycles (see for example \cite{AhujaMagnantiOrlin}). For each $w \in V_{\ell}(\mathbf{0})$ we denote by $F_w$ the sum of all the flows along $\mathbf{0}w$ paths that end at vertex $w$. $F_w$ is therefore a flow of volume $|V_{\ell}|^{-1}$ from $\mathbf{0}$ to $w$. For $d/4<k\le d$ and $v \in V_k(\mathbf{0})$ and for each $w \in S_{\mathbf{0}}(v)$ we now allocate a flow of volume $\binom{k}{l}^{-1}2^{-d}$ out of the total flow $F_w$ as commodity $\mathcal{K}_{\mathbf{0},v}$.  $|S_{\mathbf{0}}(v)|=\binom{k}{l}$ so this flow allocation is a balanced flow of volume $2^{-d}$ from $\mathbf{0}$ to $S_{\mathbf{0}}(v)$. We now perform this allocation for all $v \in V_k$ for all $d/4<k\le d$ and we need to check that the total volume of flow allocated is at most the original flow to each vertex in $V_{\ell}(\mathbf{0})$, namely $|V_{\ell}|^{-1}$.

For a particular $k$, each $w \in V_{\ell}(u)$ is in $\binom{d-\ell}{k-\ell}$  sets $S_{\mathbf{0}}(v)$ so the total flow required at $w$ is
\begin{align*}
\sum_{k=\left\lfloor d/4\right\rfloor+1}^d\binom{d\minus \ell}{k \minus \ell}\binom{k}{\ell}^{-1}2^{-d}
&=\sum_{k=\left\lfloor d/4\right\rfloor+1}^d\binom{d}{k}\binom{k}{\ell}\binom{d}{\ell}^{-1}\binom{k}{\ell}^{-1}2^{-d}\\
&=|V_{\ell}|^{-1}\sum_{k=\left\lfloor d/4\right\rfloor+1}^d\binom{d}{k}2^{-d}\\
&\le |V_{\ell}|^{-1}.
\end{align*}
Hence we have shown that there exists a balanced flow of volume $2^{-d}$ of commodity $\mathcal{K}_{\mathbf{0},v}$ from $\mathbf{0}$ to $S_{\mathbf{0}}(v)$ in $B^{*M}_{\ell+2}({\mathbf{0}})$ for all $v$ in layers $\left\lfloor d/4\right\rfloor+1$ to $d$. 

The flow volume in edge $e$ from all flows from $u$ to $S_u(v)$ for all $\{u,v\} \in \mathcal{V}^{\text{far}}$ is at most the sum of the capacities of edge $e$ in the balls $B^M_{m}(u)$ for all $u$ and all $1 \le m \le \ell+2$ which is at most $\epsilon_d c_e$ by Lemma \ref{lem:qsumcap}.
\end{proof}

Putting Lemma \ref{lem:main1} together with Lemma \ref{lem:decomp} we get

\begin{lemma}
\label{lem:main21}
Fix $\epsilon>0$ and suppose that $C$ is a random variable with $\Pro[C>0]>\frac12$. Then there exists constants $\rho>0$ and $M>0$ such that the following holds with failure probability $2^{-\rho d}$. For $\mathcal{N} \in \mathcal{G}(Q^d,C)$, for all vertex pairs $\{u,v\} \in \mathcal{V}^{\text{far}}$ there exists a balanced flow of volume $2^{-d}$ from $u$ to $S_u(v)$ in $B^M_{\ell+2}(u)$, such that the total flow volume (with no cancellation) from all such flows in any edge $e$ in $\mathcal{N}$ is at most $\epsilon \; c_e$.

\end{lemma}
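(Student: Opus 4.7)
The plan is to combine the random escape flows from Lemma \ref{lem:main1} with the deterministic decomposition in Lemma \ref{lem:decomp}, and then bound the total edge usage via the symmetry accounting from Lemma \ref{lem:qsumcap}.

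First, apply Lemma \ref{lem:main1}: there are constants $M' > 0$ and $\rho > 0$ such that, with failure probability at most $2^{-\rho d}$, for every vertex $u$ of $Q^d$ there is a balanced flow $f_u$ of volume $\Ex[C]$ from $u$ to $V_{\ell}(u)$ in $B^{M'}_{\ell+2}(u)$. Condition on this event throughout. Since $\Pro[C>0]>1/2$ forces $\Ex[C]>0$, the rescaled flow $\tilde f_u := f_u/\Ex[C]$ is a balanced $u$-to-$V_\ell(u)$ flow of volume $1$, still feasible in $B^{M'}_{\ell+2}(u)$ (it uses only a $1/\Ex[C]$ fraction of the capacity that $f_u$ uses).

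Next, apply Lemma \ref{lem:decomp} to $\tilde f_u$ (legitimate by the vertex-transitivity of $Q^d$, placing $u$ in the role of $\mathbf{0}$) to decompose it as $g_u + \sum_{v \,:\, \{u,v\} \in \mathcal{V}^{\text{far}}} f_{u,v}$ with no cancellation in the sum, where each $f_{u,v}$ is a balanced flow of volume $2^{-d}$ from $u$ to $S_u(v)$ supported on the edges of $B^{M'}_{\ell+2}(u)$. Discard the residual $g_u$; the family $\{f_{u,v}\}$ supplies the flows demanded by the lemma.

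It remains to bound the total flow through a fixed edge $e$. For each fixed source $u$, the no-cancellation property of the decomposition gives
\[
\sum_{v \,:\, \{u,v\} \in \mathcal{V}^{\text{far}}} f_{u,v}(e) \;\le\; \tilde f_u(e),
\]
which vanishes unless $e$ lies in some edge-layer $m = m(u,e) \le \ell+2$ of the source $u$, in which case it is at most $M' c_e/(|E_m|\,\Ex[C])$. Summing over all $u$, and using that each edge of $Q^d$ lies in layer $m$ of exactly $2|E_m|/d$ vertices (the symmetry argument already invoked in Lemma \ref{lem:qsumcap}), the total no-cancellation flow through $e$ is at most
\[
\sum_{m=1}^{\ell+2} \frac{2|E_m|}{d} \cdot \frac{M' c_e}{|E_m|\,\Ex[C]} \;=\; \frac{2 M' (\ell+2)}{d\,\Ex[C]}\, c_e \;=\; O(d^{\kappa-1})\, c_e.
\]
Since $\kappa < 1$, this is at most $\epsilon c_e$ for $d$ sufficiently large, so taking $M := M'$ (with the same $\rho$) gives the constants required by the lemma.

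The one real subtlety is that Lemma \ref{lem:main1} delivers volume $\Ex[C]$ whereas the target is volume $2^{-d}$; the scale-then-decompose manoeuvre reconciles the two. Everything else is bookkeeping: because the profligate factor $M$ is active only on the $\ell + 2$ layers near each source, its contribution is an $O(\ell/d) = o(1)$ fraction of the raw capacity budget, which is precisely what drives the $\epsilon$ bound.
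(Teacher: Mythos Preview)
Your approach is essentially the paper's: apply Lemma~\ref{lem:main1} to obtain the escape flows, rescale to volume~$1$, decompose via Lemma~\ref{lem:decomp}, and bound the aggregate edge usage through the layer-counting identity from Lemma~\ref{lem:qsumcap}. The paper states this in one line (``Putting Lemma~\ref{lem:main1} together with Lemma~\ref{lem:decomp}''), having already noted that the rescaling to volume~$1$ entails ``a different choice of the constant~$M$''.

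One small slip: your parenthetical ``still feasible in $B^{M'}_{\ell+2}(u)$ (it uses only a $1/\Ex[C]$ fraction of the capacity that $f_u$ uses)'' is only true when $\Ex[C]\ge 1$; if $\Ex[C]<1$ the rescaled flow uses \emph{more} capacity than $f_u$, and feasibility in $B^{M'}_{\ell+2}(u)$ can fail. Your subsequent edge bound $\tilde f_u(e)\le M'c_e/(|E_m|\,\Ex[C])$ is correct regardless, so the fix is simply to take $M:=M'/\min(1,\Ex[C])$ rather than $M:=M'$; the total-flow estimate and the $O(d^{\kappa-1})$ conclusion are unaffected.
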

\subsection{The flow in the middle part of the flow}

\begin{lemma}
Fix $\epsilon>0$ and suppose that $C$ is a random variable with $\Pro[C>0]>\frac12$ and let $\mathcal{N} \in \mathcal{G}(Q^d,C)$. With failure probability $O(e^{-\Omega(d^2)})$, for all $\{u,v\} \in \mathcal{V}^{\text{far}}$ there exist simultaneous $d^{-2}$-near-balanced flows of volume $(1+\epsilon)^{-1}\Ex[C]2^{1-d}$ between $S_u(v)$ and $S_v(u)$ in the network $\mathcal{N}$.
\label{lem:SuvSvu}
\end{lemma}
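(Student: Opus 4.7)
The plan is to imitate the proof of Lemma \ref{lem:main2} independently inside each of the exponentially many subcubes $Q(u,v)$ with $\{u,v\}\in\mathcal{V}^{\text{far}}$, and then to verify global feasibility in $\mathcal{N}$ by a capacity-counting identity. The point driving the choice of scaling $c_e\cdot 2^{1-d}/(m\binom{k}{m})$ in the definition of $\mathcal{N}^M(u,v)$ is that these allocations, summed over all pairs containing a given edge $e$, add up to at most $c_e$.

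For a fixed pair $\{u,v\}\in\mathcal{V}^{\text{far}}$ with $k=d(u,v)>d/4$, I would view $Q(u,v)$ as a copy of $Q^k$ and assign each edge $e$ in its $m$-th edge-layer the scaled capacity $c_e\cdot 2^{1-d}/(m\binom{k}{m})$. Applying Lemma \ref{lem:n3} to each layer $\ell+1\le m\le k-\ell$ (valid since $k>d/4$ forces $k^{\kappa}<\ell=o(k)$, so the smoothing hypothesis of Lemma \ref{lem:smooth} is met inside $Q(u,v)$), and then stitching the layer flows via Lemma \ref{lem:stitching}, exactly as in Lemma \ref{lem:main2p} and its extension to general $C$ in Lemma \ref{lem:main2}, produces with failure probability $e^{-\Omega(d^2)}$ a $d^{-2}$-near-balanced flow of volume at least $(1+\epsilon)^{-1}\Ex[C]\cdot 2^{1-d}$ from $S_u(v)$ to $S_v(u)$ feasible against the allocated scaled capacity.

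A union bound over the at most $2^{2d}$ pairs in $\mathcal{V}^{\text{far}}$ preserves the failure probability $e^{-\Omega(d^2)}$. To check that the superposition of these per-pair flows fits in $\mathcal{N}$, fix an edge $e$ of $Q^d$. There are $\binom{d-1}{k-1}$ $k$-subcubes containing $e$, and in each such subcube the number of unordered antipodal pairs placing $e$ in its $m$-th edge-layer is $\binom{k-1}{m-1}$ (classified by which endpoint of the pair agrees with $e$ in the coordinate distinguishing its endpoints). The identity
\[
\frac{\binom{k-1}{m-1}}{m\binom{k}{m}}=\frac{1}{k}
\]
collapses the sum over $m$, so each $k$-subcube contributes total scaled capacity $c_e\cdot 2^{1-d}$ to $e$, and then summing over $k>d/4$,
\[
\sum_{\{u,v\}\in\mathcal{V}^{\text{far}}:\, e\in Q(u,v)} (\text{allocated capacity on }e)\;\le\;c_e\cdot 2^{1-d}\sum_{k>d/4}\binom{d-1}{k-1}\;\le\;c_e.
\]

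The main obstacle is precisely this capacity count: it confirms that the scaling $\binom{k}{m}^{-1}/m$ was designed so that allocations across all pairs sum to at most $c_e$, leaving the slack in $(1+\epsilon)^{-1}$ to absorb any $o(1)$ losses coming from the internal $(1+\epsilon)$-slack of Lemma \ref{lem:n3}. Verifying that the $k$-cube version of Lemma \ref{lem:n3} goes through for the layers we need (in particular, that $\ell$ is large enough relative to $k^{\kappa}$) is the only other step requiring care, and it follows from $k>d/4$ together with $\kappa$ being a fixed constant in $(1/2,1)$.
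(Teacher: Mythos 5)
Your proposal is correct and follows essentially the same route as the paper: apply the Lemma~\ref{lem:main2} machinery inside each subcube $Q(u,v)$, take a union bound over the $O(2^{2d})$ pairs, and verify global feasibility by summing the allocated scaled capacities over all pairs containing a given edge. Your bookkeeping (fixing a $k$-subcube containing $e$, counting the $\binom{k-1}{m-1}$ pairs that place $e$ in layer $m$, and using $\binom{k-1}{m-1}/(m\binom{k}{m})=1/k$) is a tidy reorganization of the paper's count via $\sum_k\sum_m\binom{d}{k}d^{-1}2^{1-d}=1$, and you are slightly more explicit than the paper in checking that the layer lemmas go through for dimension $k>d/4$ with $\ell=\lfloor d^\kappa\rfloor$.
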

\begin{proof}
Lemma \ref{lem:main2} showed that with failure probability $e^{-\Omega(d^2)}$ there exists a $d^{-2}$-near-balanced flow of volume $\Ex[C]$ between $V_{\ell}(\mathbf{0})$ and $V_{\ell}(\mathbf{1})$ in $\mathcal{N}(u)$. Applying this result to  the networks $\mathcal{N}^M(u,v)$ for all vertex pairs $u,v$ with  $d(u,v)>d/4$ and taking the union bound  we see that with failure probability $2^{2d}e^{-\Omega(d^2)}=e^{-\Omega(d^2)}$ there exists simultaneous $d^{-2}$-near-balanced flows of volume $(1+\epsilon)^{-1}\Ex[C]$ from $S_u(v)$ to $S_v(u)$ in $\mathcal{N}^M(u,v)$ for every vertex pair $u,v \in \mathcal{V}^{\text{far}}$. We now scale these flows by a factor $2^{1-d}$ and superpose these simultaneously in the network $\mathcal{N}$. The capacity required is at most the capacity calculated from the sum of the capacities of all networks $\mathcal{N}^M(u,v)$ for all $\{u,v\} \in \mathcal{V}^{\text{far}}$. Consider first all sub-cubes $Q(u,v)$ for which $d(u,v)=k$ (counting sub-cube $Q(u,v)$ and $Q(v,u)$ once). There are $2^{d-1}\binom{d}{k}$ such sub-cubes and they each have $\binom{k}{m}$ edges in edge-layer $m$ from $u$ so each edge is in $\frac{2^{d-1}\binom{d}{k}\binom{k}{m}}{d2^{d-1}}$ such sub-cubes in edge layer $m$.  Thus an edge $e$ has to contribute a total capacity at most $\binom{d}{k}\binom{k}{m} d^{-1} \frac{2^{1-d}c_e}{\binom{k}{m}}$ to the flows of volume $2^{1-d}(1+\epsilon)^{-1}\Ex[C]$ in the networks $\mathcal{N}^M(u,v)$ with $d_Q(u,v)=k$ for which it is in layer $m$. Thus the total capacity required in edge $e$ is at most
\begin{align*}
\sum_{k>d/4}^{d}\sum_{m=\ell}^{k-\ell-1}\binom{d}{k}\binom{k}{m} d^{-1} \frac{c_e2^{1-d}}{\binom{k}{m}} &\le \sum_{k=1}^{d}\sum_{m=1}^{k}\binom{d}{k}2^{1-d}d^{-1} c_e\\
&=2^{1-d}\sum_{k=1}^{d}\frac{k}{d}\binom{d}{k} c_e\\
&=2^{1-d}\sum_{k=0}^{d-1}\binom{d\minus 1}{k} c_e=c_e.
\end{align*}
\end{proof}

\subsection{Flows between vertex pairs with large separation}
Let $\mathcal{N} \in \mathcal{G}(Q^d,C)$ and denote by $\Phi_{\text{far}}$ the maximum uniform flow volume when $\mathcal{V}$ is taken as $\mathcal{V}^{\text{far}}$
\begin{lemma}\label{lem2:far}
Fix $\epsilon>0$ and suppose that $\Pro[C>0]>\frac12$. Then as $d\rightarrow \infty$,
\[\Pro[2^{d-1}\Phi_{\text{far}}\ge(1-\epsilon)\Ex[C]]\rightarrow 1.\]
\end{lemma}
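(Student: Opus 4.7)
The plan is to mimic the three-stage construction used in the proof of Lemma~\ref{lem:0} (source escape, middle flow, sink escape), but now applied simultaneously to every pair $\{u,v\} \in \mathcal{V}^{\text{far}}$. Fix small $\delta, \eta > 0$ and set the target uniform volume to $V^* := (1-\delta)(1+\eta)^{-1}\Ex[C]\,2^{1-d}$. For each pair $\{u,v\}$ we will build a proper $uv$-flow of volume at least $(1-o(1))V^*$ in $\mathcal{N}$, with all these flows jointly feasible; by choosing $\delta, \eta$ small enough this exceeds $(1-\epsilon)\Ex[C]\,2^{1-d}$ for $d$ large.

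For the source and sink parts, we invoke Lemma~\ref{lem:main21} with its internal parameter chosen so that, after rescaling each balanced flow from $u$ to $S_u(v)$ of volume $2^{-d}$ up to volume $V^*$ (a bounded multiplicative factor, since $V^*\cdot 2^d$ is a constant), the resulting simultaneous flows use at most $\tfrac{\delta}{4}\,c_e$ per edge with no cancellation; the same is done for the sink flows from $v$ to $S_v(u)$. For the middle, we apply Lemma~\ref{lem:SuvSvu} with its parameter $\eta$ and then rescale all edge-capacities by $1-\delta$, obtaining simultaneous $d^{-2}$-near-balanced flows of volume exactly $V^*$ between $S_u(v)$ and $S_v(u)$ for every pair, using total capacity at most $(1-\delta)\,c_e$ per edge. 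The three contributions together consume at most $(1-\delta+\tfrac{\delta}{2})\,c_e < c_e$ per edge, so the whole collection of (still improper) commodity flows is feasible in $\mathcal{N}$.

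It remains to convert each commodity into a \emph{proper} $uv$-flow. For a fixed pair $\{u,v\}$, superpose its three parts into a single function $f_{uv}$ on the directed edges of $\mathcal{N}$. By construction $f_{uv}^+(u)=V^*$, $f_{uv}^-(v)=V^*$, every vertex outside $S_u(v)\cup S_v(u)\cup\{u,v\}$ is perfectly balanced, and the total absolute imbalance over $S_u(v)\cup S_v(u)$ is at most $d^{-2}V^*$: the source- and sink-escape pieces are exactly balanced on these sets, so only the near-balance slack of Lemma~\ref{lem:SuvSvu} contributes. Thus Lemma~\ref{lem:stitching}, applied with $S=\{u\}$, $T=\{v\}$ and $\theta = O(d^{-2}) < 1/9$ for large $d$, yields a proper feasible $uv$-flow of volume at least $(1-2\theta)V^* = (1-o(1))V^*$, as required.

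The main technical point is the per-edge capacity accounting: Lemma~\ref{lem:main21} lets us bound the cumulative source-escape load across \emph{all} pairs with a \emph{single} arbitrarily small fraction of $c_e$, so the source and sink stages fit alongside the (almost full-strength) middle flow without noticeable loss. The remaining ingredients---the near-balance at $S_u(v)\cup S_v(u)$, the stitching via Lemma~\ref{lem:stitching}, and a union bound on the failure events of Lemmas~\ref{lem:main21} and~\ref{lem:SuvSvu} giving total failure probability $O(2^{-\rho d}) + O(e^{-\Omega(d^2)}) = o(1)$---are routine.
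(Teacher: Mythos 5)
Your proof is correct and follows the same route as the paper: Lemma~\ref{lem:main21} for the source and sink escape flows, Lemma~\ref{lem:SuvSvu} for the middle flows, Lemma~\ref{lem:stitching} to convert the assembled improper flows into proper $uv$-flows, and a union bound on the failure events. Your per-edge capacity bookkeeping (pre-scaling the middle flows by $1-\delta$ so the total stays strictly below $c_e$) is slightly more explicit than the paper's, which bounds the total by $(1+o(1))c_e$ via Lemma~\ref{lem:qsumcap} and then rescales, but this is a cosmetic difference rather than a different approach.
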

\begin{proof}
Lemma \ref{lem:main21} showed that given $\epsilon>0$ there exists constants $\rho>0$ and $M>0$ such that with failure probability $2^{-\rho d}$ the following holds. There exist simultaneous balanced flows of volume $2^{-d}$ in $B_u^M(\ell+2)$ between $u$ and $S_u(v)$ for all pairs $\{u,v\}$ in $\mathcal{V}^{\text{far}}$ such that the total flow (with no cancellation) from all such flows occurring in any edge $e$ in $\cN$ is less than $\epsilon c_e$. Lemma \ref{lem:SuvSvu} showed that with failure probability $e^{-\Omega(d^2)}$ for all vertex pairs $\{u,v\} \in \mathcal{V}^{\text{far}}$ there exist simultaneous $d^{-2}$-near-balanced flows of volume $(1+\epsilon)^{-1}\Ex[C]2^{1-d}$ between $S_u(v)$ and $S_v(u)$ in $\cN$. 

Thus we achieve the flows we require by scaling the flows and capacities from Lemma \ref{lem:main21} by $\Ex[C]$ and `stitching' them together with the flows from Lemma \ref{lem:SuvSvu} using the results from Lemma \ref{lem:stitching}. The capacity of edge $e$ in the superimposed networks is shown to be at most $(1+\epsilon)c_e$ by Lemma \ref{lem:qsumcap}.
\end{proof}

\subsection{Vertex pairs with small separation}\label{sec:short}
To route flows between pairs of vertices that are `near' ($d(u,v)\le d/4$) we route flows to distant vertices and back again. We have already shown that, w.h.p. these flows exist, and the number of `near' vertex pairs is small so the flows can be accommodated in a small part of the capacities of any edge.

\begin{lemma}\label{lem:short}
Fix $\epsilon>0$ and suppose that $\Pro[C>0]>\frac12$. Let $\mathcal{N} \in \mathcal{G}(Q^d,C)$. Then there exists a constant $\rho>0$ such that with failure probability $2^{-\rho d}$ there are simultaneous flows of volume $2^{1-d}\Ex[C]$ between vertices $u$ and $v$ for all $\{u,v\} \in \mathcal{V}^{\text{near}}$  in the network $\mathcal{N}$ with the edge-capacities of $\mathcal{N}$ scaled by $\epsilon$.
\end{lemma}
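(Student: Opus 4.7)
The plan is to route each near-pair flow through a large family of intermediate vertices that are far from both endpoints, borrowing only a small fraction of the simultaneous far-pair flows provided by Lemma~\ref{lem2:far}. Since a given vertex has exponentially fewer near partners than far partners, the total capacity used on any edge will be a vanishing fraction of $c_e$, and certainly at most $\epsilon\,c_e$ for large $d$.

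I will start by invoking (the proof of) Lemma~\ref{lem2:far}: with failure probability $O(2^{-\rho d})$, simultaneous feasible flows $F_{u,w}$ from $u$ to $w$ of volume $(1-\epsilon')\Ex[C]\,2^{1-d}$ exist for every $\{u,w\} \in \mathcal{V}^{\text{far}}$, and $\sum_{\{u,w\} \in \mathcal{V}^{\text{far}}}|F_{u,w}(e)| \le c_e$ for every edge $e$; here $\epsilon'>0$ may be chosen small. Condition on this event. Applying Lemma~\ref{ineq:chern} to $\Bin(d,1/2)$ shows that the Hamming ball of radius $d/4$ in $Q^d$ has size at most $2^d e^{-cd}$ for some constant $c>0$. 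Hence if
\[ W_{u,v}=\{w\in V(Q^d):d(u,w)>d/4\text{ and }d(v,w)>d/4\},\]
then $|W_{u,v}|\ge 2^{d-1}$ for every near pair $\{u,v\}$ and $d$ large. The required $u$--$v$ flow will be
\[ f^{u,v}=\frac{1}{(1-\epsilon')|W_{u,v}|}\sum_{w\in W_{u,v}}\bigl(F_{u,w}+F_{w,v}\bigr),\]
with $F_{u,w}$ oriented from $u$ to $w$ and $F_{w,v}$ from $w$ to $v$. At each intermediate $w$ the scaled inflow from $F_{u,w}$ exactly cancels the scaled outflow into $F_{w,v}$, so $f^{u,v}$ is a proper $uv$-flow of volume $\Ex[C]\,2^{1-d}$.

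To check the capacity bound on a fixed edge $e$, I will bound the total coefficient that each $|F_{a,b}(e)|$ receives in $\sum_{\{u,v\}\in \mathcal{V}^{\text{near}}}|f^{u,v}(e)|$. For each ordered far pair $(a,b)$, this coefficient is the sum of $1/((1-\epsilon')|W_{\cdot,\cdot}|)$ over at most $|B_{d/4}(a)|\le 2^d e^{-cd}$ near partners of $a$ (those $v$ with $\{a,v\}\in \mathcal{V}^{\text{near}}$ and $b\in W_{a,v}$, so that $F_{a,b}$ plays the role of $F_{u,w}$), plus a symmetric term from near partners of $b$. Each summand is at most $2\cdot 2^{-d}/(1-\epsilon')$, so the coefficient is $O(e^{-cd})$. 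Summing over all ordered far pairs and using $\sum_{(a,b)\text{ ordered}}|F_{a,b}(e)|\le 2c_e$ yields total capacity $O(e^{-cd})\,c_e$ on $e$, which is below $\epsilon\,c_e$ for $d$ sufficiently large. The overall failure probability is inherited from Lemma~\ref{lem2:far}, namely $O(2^{-\rho d})$, as required.

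The main obstacle is the combinatorial bookkeeping: after stitching two halves together, each near-pair flow borrows from two far-pair flows, and each far-pair flow can be borrowed from by many near pairs at each of its two endpoints, so the overlaps must be tracked carefully. The exponential gap between $|B_{d/4}(x)|$ and $2^d$ (arising because $d/4<d/2$) provides a very comfortable margin, so any careful accounting succeeds.
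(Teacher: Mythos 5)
Your proof is correct and takes a genuinely different (and arguably cleaner) route than the paper. The paper routes each near-pair flow through two specific far intermediaries, the antipodal points $\overline{u}$ and $\overline{v}$, sending half the flow via $u\to\overline{u}\to v$ and half via $u\to\overline{v}\to v$; the capacity bound then comes from counting near pairs (at most $2^d d |V_{\lfloor d/4\rfloor}| \le 2^{1.9d}$, exponentially sparse relative to the $\sim 2^{2d-1}$ far pairs) together with a cube-symmetry argument to spread the usage evenly across edges. You instead average each near-pair flow over the entire set $W_{u,v}$ of far intermediaries (size at least $2^{d-1}$), borrowing from each far-pair flow $F_{a,b}$ only a coefficient of order $2^{-d}$; the per-edge bound then follows directly by swapping the order of summation, bounding the total coefficient on $|F_{a,b}(e)|$ by $O(|B_{d/4}|\cdot 2^{-d}) = O(e^{-cd})$, and appealing to feasibility of the far-pair family, $\sum_{\{a,b\}\in\mathcal{V}^{\text{far}}}|F_{a,b}(e)|\le c_e$. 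What the paper's approach buys is a very short presentation; what yours buys is that the capacity accounting is edge-by-edge and fully explicit, with no appeal to symmetry of the (random) capacities, and the scaled pieces $F_{u,w}$ and $F_{w,v}$ cancel exactly at each intermediary $w$ so properness of $f^{u,v}$ is immediate. Both rest on the same underlying exponential gap arising from $d/4 < d/2$. One small point worth stating explicitly if you were to write this out in full: you need the flows $F_{u,w}$ and $F_{w,v}$ appearing in your sum to have equal volume so that the net inflow/outflow at each $w$ cancels, which is exactly the uniformity supplied by Lemma~\ref{lem2:far}; and the failure probability $O(2^{-\rho d})$ must indeed be extracted from the proof of Lemma~\ref{lem2:far} (via Lemmas~\ref{lem:main21} and \ref{lem:SuvSvu}) rather than from its statement, which only asserts convergence to $1$ --- but you flagged this correctly.
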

\begin{proof}
We want a flow of volume $2^{1-d}\Ex[C]$ between two vertices $u,v$ a distance apart less than $d/4$. The idea is to route half the flow from $u$ to $\overline{u}$ (the antipodal point of $u$)  and then back to $v$ and half from $u$ to $\overline{v}$ and then back to $v$. From Lemma \ref{lem2:far}, with high probability, all of these flows  ($u$ to $\overline{u}$, $\overline{u}$ to $v$, $v$ to $\overline{v}$, $\overline{v}$ to $u$) with volumes $(1-\epsilon)^{-1}\Ex[C]$  exist in networks $\mathcal{N}(u,\overline{u})$, $\mathcal{N}(u,\overline{v})$ for all $\{u,v\}$. The number of such vertex pairs is at most $2^d (d/4){|V_{\left\lfloor d/4\right\rfloor}|}\le 2^d d(4e)^{d/4} \le 2^{1.9 d} $ for large $d$. Hence, by the symmetry of the cube, the total flow in any edge $e$ from all these flows is at most
\[ 2d 2^{1-d} (1-\epsilon)^{-1}\Ex[C] \frac{2^{1.9 d}}{d 2^{d-1}}\rightarrow 0.\] 
\end{proof}


\section{Concluding Remarks}\label{sec:conclusion}
We have investigated uniform multicommodity flows in the cube with random edge-capacities distributed like some given random variable $C$. There are two natural directions for further investigation. Firstly, this paper has been restricted to the case $\Pro[C>0]>1/2$ which ensures that, with high probability, the network $\mathcal{N}$ is connected, and in this case we have been able to tell a full story. The component structure of the network in the case $C \sim \Ber(p)$ for $0 <p\le 1/2$ is more complicated.   (See \cite{component} for a full analysis). Multicommodity flows in the largest component of such networks are investigated in another paper \cite{secondpaper}. We remark that for $d \ge 4$ there exists a disconnected subgraph of $Q^d$ in which each antipodal pair of vertices is connected by a path. Such a subgraph could support a non-zero antipodal multicommodity flow despite being disconnected. However for $p<1/2$, the probability of isolated vertices tends to 1 and hence the probability of a non-zero uniform multicommodity flow tends to zero.

Secondly, this paper looked at undirected networks but it is also interesting to look at directed networks formed by replacing each undirected edge of $Q^d$ with a pair of opposingly directed edges with (possibly identical, possibly independent) random edge-capacities. We study this in \cite{PWThesis}.


\addcontentsline{toc}{section}{Bibliography}
%

\end{document}